\documentclass[11pt, a4paper]{article}

\usepackage[english]{babel}
\usepackage[utf8x]{inputenc}
\usepackage[T1]{fontenc}
\usepackage{xparse,amsmath}
\usepackage{amsfonts}
\usepackage{amsthm}
\usepackage{amssymb}
\usepackage{graphicx}
\usepackage[shortlabels]{enumitem}

\usepackage[a4paper,top=3cm,bottom=2cm,left=3cm,right=3cm,marginparwidth=1.75cm]{geometry}

\setlength\parindent{0pt}

\usepackage{amsfonts}
\usepackage[colorinlistoftodos]{todonotes}
\usepackage{hyperref}

\hypersetup{
  colorlinks   = true,    
  urlcolor     = blue,    
  linkcolor    = cyan,    
  citecolor    = red      
}

\title{On the resolution of the sensitivity conjecture}
\author{Rohan Karthikeyan 
\qquad Siddharth Sinha \qquad  Vallabh Patil\\
\small Department of Applied Mathematics\\
\small Delhi Technological University\\ 
\small Delhi, India\\
\small\tt fromrohank07@gmail.com\\}

\setlength{\marginparwidth}{2cm}
\begin{document}
\theoremstyle{plain}
\newtheorem{theorem}{Theorem}[section]
\newtheorem{lemma}[theorem]{Lemma}
\newtheorem{corollary}[theorem]{Corollary}
\newtheorem{proposition}[theorem]{Proposition}
\newtheorem{conjecture}[theorem]{Conjecture}
\newtheorem{criterion}[theorem]{Criterion}
\newtheorem{algorithm}[theorem]{Algorithm}
\newtheorem{condition}[theorem]{Condition}
\newtheorem{problem}[theorem]{Problem}
\newtheorem{example}[theorem]{Example}
\newtheorem{exercise}{Exercise}[section]
\newtheorem{obs}{Observation}
\newtheorem{note}[theorem]{Note}
\newtheorem{notation}[theorem]{Notation}
\newtheorem{claim}[theorem]{Claim}
\newtheorem{summary}[theorem]{Summary}
\newtheorem{acknowledgment}[theorem]{Acknowledgment}
\newtheorem{case[theorem]}{Case}
\newtheorem{conclusion}[theorem]{Conclusion}
\newtheorem{definition}[theorem]{Definition}
\newtheorem{remark}[theorem]{Remark}

\def\bt{\mathbf t}
\def\bZ{\mathbb{Z}}
\def\bS{\mathbb{R}}
\def\bN{\mathbb{N}}

\maketitle

\begin{abstract}
The Sensitivity Conjecture is a long-standing problem in theoretical computer science that seeks to fit the sensitivity of a Boolean function into a unified framework formed by the other complexity measures of Boolean functions, such as block sensitivity and certificate complexity. After more than thirty years of attacks on this Conjecture, Hao Huang (2019) gave a very succinct proof of the Conjecture. In this survey, we explore the ideas that inspired the proof of this Conjecture by an exposition of four papers that had the most impact on the Conjecture. We also discuss progress on further research directions that the Conjecture leads us to.

\textbf{Mathematics Subject Classification}. 68R10, 05C35, 05-02; 68Q17, 94C10, 41A10, 42A16, 05E05, 15A24
\end{abstract}

\section{Introduction}

Boolean functions map a sequence of bits to a single bit vector $0$ or $1$, represented as False and True respectively. Some of the simplest Boolean functions are the AND function $x \cdot y$, the OR function (non-exclusive) $x + y$, and the NOT function $\bar{x} = 1- x$. While analysing Boolean functions, we would naturally want to analyse the complexity of these functions. Precisely, this problem can be stated as follows (see \cite{stasys}) - 
\begin{center}
    How many of these simplest functions do we need to use to calculate the value of a given Boolean function on all the input vectors?
\end{center}

There are several useful measures to describe the complexity of a Boolean function that can be stated mathematically. Two such measures are sensitivity and block sensitivity. The \textit{sensitivity} $s(f)$ of a Boolean function $f:\{0, 1\}^n \rightarrow \{0, 1\}$ on $n$ variables is defined to be the largest number for which there is an $x$ such that there are at least $s(f)$ values of $i = 1, \cdots, n$ with $f(x + e_i) \neq f(x)$. Here $e_i$ is the Boolean vector with exactly one $1$ in the $i$-th position. Further, the \textit{block sensitivity}, $bs(f)$ is defined to be the maximum number of disjoint subsets of $B_1, \cdots, B_t$ of $[n] = \{1, \cdots, n\}$ such that for all $1 \leq j \leq t, f(x) \neq f(x^{B_j})$ where $x^{B_j}$ is the Boolean string which differs from $x$ on exactly the bits of $B_j$. \newline

It may be useful to think of sensitivity of a Boolean function in the following way: Suppose there is an array of $n$ switches, with some wiring for a single light bulb. For different configurations of switches flipped on or off, the light bulb is either on, or off. Then the circuit is said to be sensitive, or \textit{critical}, with respect to the $i$-th switch if for some configuration of the states of the switches, flipping the $i$-th switch  will change the state of the light bulb from on to off, or vice versa. For that configuration, there may be more than one such switch for which the circuit is sensitive. If we count the number of such switches at which the circuit is sensitive, and do so for every configuration of the switches, then the greatest number of switches for which the circuit was sensitive for some configuration, is its global sensitivity, or its \textit{critical complexity}.\newline 

Now, block sensitivity is known to be polynomially related to a number of other complexity measures of $f$, including the decision-tree complexity, and the certificate complexity. A long-standing open question that existed was whether sensitivity also belonged to this equivalence class. A positive answer to this question is commonly known as the \textit{Sensitivity Conjecture} first proposed by Nisan and Szegedy \cite{Nisz}.\newline

The Conjecture can also be seen as concerning whether sensitivity is polynomially related to $n$, the number of input variables to the Boolean function. To resolve this, Gotsman and Linial \cite{Gotsman} proved the equivalence between the following two problems:
\begin{enumerate}
    \item Denote the $n$-dimensional cube by $Q_n= \{-1, 1\}^n$ and the maximal degree of any graph $G$ by $\Delta(G)$. Now, for an induced subgraph $G$ of $Q_{n}$ with strictly greater than half the number of vertices in $Q_n$, i.e., greater than $2^{n-1}$ vertices, can $\Delta(G)$ be bounded below by a function of $n$?
    \item Let $f: Q_n \rightarrow \{-1, 1\}$ be a Boolean function, with its sensitivity as defined above. Denoting the degree of the multilinear polynomial representation of $f(x)$ by $\deg(f)$, is $\deg(f)$ bounded above by a function of $s(f)$?
\end{enumerate}

The equivalence established by Gotsman and Linial was predicated of the representation of a Boolean function by the unique 2-colorings of the vertices of the $n$-dimensional hypercube. The two possible colors correspond to either $-1$ or $1$. It was previously established by Chung et. al. \cite{Chung} that for an induced sub-graph $G$ of $Q_n$ with strictly more than half the vertices of the $Q_n$, and for some vertex $v \in {G}$, the degree of $v$ in $G$ is bounded above logarithmically in $n$. Recently, this bound was improved from the logarithmic bound to a polynomial relation \cite{Hao}.
\begin{theorem} [Hao]
For every integer $n\geq 1$, let G be an arbitrary $(2^{n-1}+1)$-vertex induced subgraph of $Q_n$, then $$\Delta(G)\geq \sqrt{n}$$
\end{theorem}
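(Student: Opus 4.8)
The plan is to convert the combinatorial statement into a spectral one by means of a carefully chosen $\pm 1$ \emph{signing} of the edges of $Q_n$. Concretely, I would define a sequence of symmetric matrices $A_n \in \{-1,0,1\}^{2^n \times 2^n}$ recursively by
$$A_1 = \begin{pmatrix} 0 & 1 \\ 1 & 0\end{pmatrix}, \qquad A_n = \begin{pmatrix} A_{n-1} & I_{2^{n-1}} \\ I_{2^{n-1}} & -A_{n-1}\end{pmatrix},$$
so that $A_n$ has exactly the same support as the ordinary adjacency matrix of $Q_n$; that is, $A_n$ is a signed adjacency matrix of the hypercube. The first key step is to prove, by induction on $n$, that $A_n^2 = n\,I_{2^n}$: this is a short block-matrix computation that uses the inductive hypothesis $A_{n-1}^2=(n-1)I$ and the fact that the off-diagonal blocks cancel. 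It follows that the only eigenvalues of $A_n$ are $\sqrt n$ and $-\sqrt n$, and since $\operatorname{tr}(A_n)=0$, each occurs with multiplicity exactly $2^{n-1}$.

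The second step is an application of Cauchy's interlacing theorem. Given a $(2^{n-1}+1)$-vertex induced subgraph $G$ of $Q_n$, let $B$ be the principal submatrix of $A_n$ obtained by restricting to the rows and columns indexed by $V(G)$; because $G$ is an \emph{induced} subgraph, $B$ is itself a signed adjacency matrix of $G$, with entries $\pm 1$ precisely at the edges of $G$ and $0$ elsewhere. Interlacing gives $\mu_1(B)\ge \lambda_{2^{n-1}}(A_n)$, where $\lambda_1\ge\lambda_2\ge\cdots$ and $\mu_1\ge\mu_2\ge\cdots$ denote the (decreasingly ordered) eigenvalues of $A_n$ and of $B$ respectively; indeed, with $N=2^n$ and $m=2^{n-1}+1$ one has $1+N-m=2^{n-1}$. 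Since the top $2^{n-1}$ eigenvalues of $A_n$ all equal $\sqrt n$, this yields $\mu_1(B)\ge\sqrt n$.

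The third step relates $\mu_1(B)$ back to the maximum degree of $G$. For any real symmetric matrix $B$, the spectral radius is bounded above by the maximum absolute row sum, $\mu_1(B)\le \max_i\sum_j |B_{ij}|$ (one can see this from the Rayleigh quotient, or from Gershgorin's theorem). For our $B$ the $i$-th absolute row sum is exactly the degree of vertex $i$ in $G$, hence $\mu_1(B)\le\Delta(G)$. Combining this with the lower bound from Step 2 gives $\Delta(G)\ge\sqrt n$, which is the assertion.

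I expect the main — indeed essentially the only non-routine — obstacle to be discovering the signing itself. The recursion above is engineered exactly so that $A_n^2$ collapses to the scalar matrix $nI$, and it is this ``perfect square'' property that forces the enormous eigenvalue multiplicity $2^{n-1}$ needed to feed the interlacing inequality; finding a signing with this property, rather than just estimating the spectral radius of the unsigned hypercube, is the crux. Once $A_n$ is in hand, Steps 2 and 3 are standard linear algebra. A secondary point worth checking carefully is that passing to an \emph{induced} subgraph is precisely what guarantees that $B$ is the honest signed adjacency matrix of $G$, with no spurious or missing nonzero entries, so that the absolute-row-sum bound really does reduce to the degree bound.
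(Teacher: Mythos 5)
Your proposal is correct and follows essentially the same route as the paper: you construct the identical recursive $\pm 1$ signing of $Q_n$, verify $A_n^2 = nI$ to pin down the spectrum as $\{\pm\sqrt n\}$ each with multiplicity $2^{n-1}$, apply Cauchy interlacing to the principal submatrix indexed by $V(G)$, and bound the top eigenvalue of that submatrix by $\Delta(G)$ via the absolute row sum. This is precisely Huang's argument as presented in Section 6, with the spectral-radius-versus-degree step corresponding to the paper's Lemma 6.6.
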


This result directly resolves the Sensitivity Conjecture:
\begin{conjecture}[Sensitivity Conjecture]
There exists an absolute constant $C > 0$, such that for every Boolean function $f$, $$bs(f)\leq s(f)^{C}$$
\end{conjecture}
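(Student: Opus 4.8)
\emph{Proof proposal.}
The plan is to deduce the Conjecture from Hao's theorem by chaining it with two facts already available to us: the Gotsman--Linial equivalence recalled in the introduction (in its quantitative form), and the classical polynomial relation between block sensitivity and polynomial degree. Schematically, Hao's theorem $\rightsquigarrow$ an upper bound $\deg(f)\le s(f)^2$ $\rightsquigarrow$ an upper bound on $bs(f)$ that is polynomial in $s(f)$.

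First I would convert Hao's theorem into a bound on the degree. The quantitative Gotsman--Linial statement underlying the equivalence of the two problems above is: if $\Delta(H)\ge h(n)$ for every induced subgraph $H$ of $Q_n$ with $|V(H)|\neq 2^{n-1}$ (where for $|V(H)|<2^{n-1}$ one applies the bound to the complementary induced subgraph $Q_n\setminus H$, which then has more than $2^{n-1}$ vertices), then $h(\deg(f))\le s(f)$ for every Boolean $f$. Hao's theorem supplies exactly such an $h$, namely $h(n)=\sqrt n$: given any induced subgraph $H$ with $|V(H)|>2^{n-1}$, restrict to any $(2^{n-1}+1)$-vertex induced subgraph to conclude $\Delta(H)\ge\sqrt n$. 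Hence $\sqrt{\deg(f)}\le s(f)$, that is,
$$\deg(f)\le s(f)^2 .$$

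Next I would invoke the fact that $bs(f)$ is polynomially bounded in $\deg(f)$: by the theorem of Nisan and Szegedy, $bs(f)\le 2\,\deg(f)^2$ (the precise constant is immaterial here; this is one of the classical relations placing $bs(f)$ in the same equivalence class as certificate complexity, decision-tree complexity, and $\deg(f)$). Substituting the previous inequality gives
$$bs(f)\le 2\,\deg(f)^2\le 2\,s(f)^4 .$$
Finally I would tidy up the constant: for $s(f)\ge 2$ this already yields $bs(f)\le s(f)^5$, while $s(f)=0$ forces $f$ constant (so $bs(f)=0$) and $s(f)=1$ forces $\deg(f)\le 1$ (so $bs(f)\le 2$); thus the Conjecture holds with, say, $C=5$, and a sharper accounting in the last step recovers the constant stated in Hao's paper.

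I do not expect a genuine obstacle in this reduction: all of the difficulty is packaged inside Hao's theorem itself, whose proof — a spectral argument using eigenvalue interlacing for a suitably signed adjacency matrix of $Q_n$ — is the real content, and the polynomial bound on $\Delta(G)$ was precisely the point on which the Conjecture had been stuck for decades. Within the present argument the only steps requiring care are verifying the exact hypotheses of the Gotsman--Linial equivalence (in particular that Hao's theorem, phrased for $(2^{n-1}+1)$-vertex subgraphs, covers every induced subgraph with $|V(H)|\neq 2^{n-1}$ after the complement trick) and the routine bookkeeping with monotone bounds and the small-$s(f)$ cases above.
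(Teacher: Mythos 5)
Your reduction is exactly the one the paper uses: Hao's theorem feeds the Gotsman--Linial equivalence to give $\deg(f)\le s(f)^2$, which combined with the Nisan--Szegedy bound $bs(f)\le 2\deg(f)^2$ yields $bs(f)\le 2s(f)^4$ (the paper's Section 6.1 states precisely this pair of quadratic bounds). Your extra bookkeeping for small $s(f)$ and the complement trick are fine refinements of the same argument, so the proposal matches the paper's proof.
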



In the past three decades, a great amount of research has been done on resolving the Sensitivity Conjecture, resulting in a wide-ranging body of knowledge. In this survey, we could not hope to cover even a fraction of it. The selection of material is largely limited to results that have had the most impact on the development of the Conjecture. We would like to direct the attention of the interested reader to the surveys of Buhrman and de Wolf \cite{wolf}, and  Hatami, Kulkarni and Pankratov \cite{haka} for more in-depth discussions on the Conjecture.\newline

In this survey, we will expound on the progression of results by Chung, Gotsman and Linial, Nisan, and ultimately, Hao, and the connections between their results. We first delve into the complexity measures of Boolean functions in section \hyperref[sec:Nota]{2}. In section \hyperref[sec:Chung]{3}, the combinatorial proof of the logarithmic bound established by Chung et al., will be analysed with special consideration to its connections with the geometry of the hypercube. We will also be looking at Nisan's results characterising the degree of Boolean functions in terms of combinatorial properties of those functions in section \hyperref[sec:Nisan]{4}. 
Furthermore, in section \hyperref[sec:GL]{5}, we will discuss Gotsman and Linial's proof to the equivalence between the problem of the order of hypercubes and the degrees of their induced subgraphs, and the relation between the degree and sensitivity of Boolean functions. In section \hyperref[sec:Hao]{6}, we will discuss Hao's succinct proof and then move on to providing comments on some open problems of Hao in section \hyperref[sec:Fur]{7}, before finishing with the relation of the Conjecture to CREW PRAMs.\newline

We would like to thank our project advisor, Dr. Chandra Prakash Singh for his constant encouragement in the preparation of this manuscript. We would also like to thank the Quanta magazine \cite{gil} for introducing us to this wonderful topic of sensitivity. Last but not the least, we would like to thank our parents for providing constant moral support.

\section{Some Complexity Measures of Boolean Functions}
\label{sec:Nota}
Consider the set $Q_n = \{-1, 1\}^n$. The \textit{hypercube} of dimension $n$ is the graph with vertex set $Q_n$ and an edge between $x =(x_1, x_2, \cdots, x_n)$ and $y = (y_1, y_2, \cdots, y_n)$ in $Q_n$ if there is exactly one $i$ such that $x_i \neq y_i$. Let $f$ be a function mapping $Q_n$ to $\{-1, 1\}$. 

\begin{definition} The \textit{sensitivity} of $f$ at $x$, denoted $s(f, x)$, is defined as the number of neighbours $y$ of $x$ for which $f(x) \neq f(y)$. 
\end{definition}
In other words, it's the number of $i$ such that: $$ f(x_1, \cdots, x_i, \cdots, x_n) \neq f(x_1, \cdots, -x_i, \cdots, x_n)$$ 
\begin{definition}
The \textit{(overall) sensitivity} of $f$, also called the critical complexity, denoted $s(f)$, is the maximum over all $x$ in $Q_n$ of the sensitivity of $f$ on $x$, that is, $$ s(f) = \max_{x\, \in\, Q_n} s(f, x)$$
\end{definition}

We now move on to the second complexity measure. For a string $x \in \{0, 1\}^n$ and a set $S \subseteq [n]: = \{1, 2, \cdots, n\}$, we define $x^{(S)}$ to be the Boolean string which differs from $x$ on exactly the bits in $S$. 

\begin{definition} The \textit{block sensitivity} of $f$, denoted by $bs(f)$, is the maximum number $t$ such that there exists an input $x \in \{0, 1\}^n$ and $t$ disjoint subsets $B_1, \cdots, B_t \subset [n]$ such that for all $1 \leq i \leq t$, $f(x) \neq f(x^{(B_i)})$.
\end{definition}

Before proceeding further, it will be worthwhile to have a look into the decision tree model. In this model, we would like to compute the value of a given function at an unknown input. To do so, we collect information about the input by asking questions. Considering Boolean functions, we will ask only binary queries, that is, the possible answers to the query will be $0$ and $1$. Now, each question asked depends only on the information gained so far. 

A \textit{decision tree (deterministic)} can then be defined as a rooted binary tree with labels on each node and edge. Each inner node is labelled by a query. One of the two edges leaving the node is labeled $0$, the other is labelled $1$. The two labels represent the two possible answers to the query. Each leaf, labelled $0$ or $1$, give the output.\newline

The Boolean function computed by the given decision tree takes the label at this leaf as the value on the given input. Now, we define $\text{cost}(A, x)$ as the number of queries asked when the decision tree $A$ is executed on input $x$. That is, it's the length of the computation path forced by $x$. Also, $\max_{x} \text{cost}(A, x)$ defines the worst case complexity of $A$, that is, the tree's depth.

\begin{definition}
The \textit{decision tree complexity} of $f$, denoted by $D(f)$, is given by: $$D(f) = \min_{A} \max_{x} \text{cost}(A, x)$$ 
\end{definition}
That is, it's the depth of the minimum-depth decision tree that computes $f$. It is obvious that any function $f: \{0, 1\}^n \rightarrow \{0, 1\}$ can be computed by asking $n$ questions.\newline

And, moving towards the last complexity measure, define a $b$-certificate (for $b \in \{0, 1\}$) as a partial assignment $p: S \rightarrow \{0, 1\}$, which forces the value of $f$ to $b$. Here, $S$ is some subset of the concerned variables. 

\begin{definition}
The \textit{certificate complexity} of a Boolean function $f$ on $x$, denoted $C_{x}(f)$, is the size of the smallest certificate that agrees with $x$. 
\end{definition}
Now, the certificate complexity of $f$, denoted $C(f)$, is defined as $C(f) = \max_{x \, \in \,Q_n} C_{x}(f)$. How can we visualise this measure? We can think of a Boolean function as coloring the vertex on a $n$-dimensional hypercube using just two colors. Then, the certificate complexity is $n$ minus the dimension of the largest monochromatic hypercube in $Q_n$.

\section{Chung's result}
\label{sec:Chung}
\textbf{\underline{Aim:}} In this section, we expound Chung's result \cite{Chung}: An induced subgraph of the $n$-dimensional cube graph $Q_n$ with strictly more than half of its vertices has a maximum degree of at least $o(\log n)$. Also, they constructed a $(2^{n-1} + 1)$-vertex induced subgraph whose maximum degree is $\lceil \sqrt{n} \, \rceil$.\newline

First, we need two definitions.
\begin{definition}
Take a graph $G = (V, E)$. For $S \subset V$, an \textit{induced subgraph} $G[S]$ is the graph with vertex set $S$ and edge set $E'$ where $E'$ consists of all edges in $E$ that have both endpoints in $S$.
\end{definition}

\begin{definition}
The \textit{maximum degree} $\Delta(G)$ of a graph $G = (V, E)$ is defined as: $$\Delta(G) = \max_{v \in V(G)} \deg_G(v)$$
\end{definition}

Now, the theorem we want to prove is:
\begin{theorem} Let $G$ be an induced subgraph of $Q_n$ with at least $2^{n-1} + 1$ vertices. Then for some vertex $v$ of $G$, we have: \begin{equation} \deg_G(v) > \frac{1}{2}\log n - \frac{1}{2}\log \log n + \frac{1}{2} \label{degGv} \end{equation} Also, there exists a $(2^{n-1}+1)$ vertex induced subgraph G of $Q_n$ with \begin{equation} \Delta(G) < \sqrt{n} + 1 \end{equation}
\end{theorem}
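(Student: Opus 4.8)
The plan is to establish the two halves of the theorem separately, as they require quite different techniques: the lower bound is an extremal/counting argument on the hypercube, while the upper bound is an explicit construction.

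For the \emph{lower bound}, I would argue by contrapositive. Suppose $G$ is an induced subgraph of $Q_n$ in which every vertex has degree at most $d$; the goal is to show that $|V(G)| \leq 2^{n-1}$, provided $d \leq \tfrac{1}{2}\log n - \tfrac{1}{2}\log\log n + \tfrac{1}{2}$, i.e. provided $2^{2d} \leq n/\log n$ roughly. The natural approach is to partition $Q_n$ into smaller subcubes and count. Concretely, fix a parameter $k$ (to be chosen near $2^{2d}$ or so) and split the $n$ coordinates into blocks; group the $2^n$ vertices into $2^{n-k}$ subcubes each of dimension $k$ (cosets of a fixed $k$-dimensional subcube). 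Within a single $k$-dimensional subcube $C$, the key sub-claim is: if $G\cap C$ has more than $2^{k-1}$ vertices, then $G$ restricted to $C$ already contains a vertex of degree $> \tfrac12\log k - \cdots$ inside that subcube — but that is exactly the statement we are trying to prove, only for $k$ in place of $n$. So this suggests an \emph{induction on $n$} rather than a one-shot count. Alternatively — and I think this is cleaner — one shows directly: in any induced subgraph of $Q_k$ with maximum degree $\le d$, the number of vertices is at most $2^{k-1}$ whenever $k > d\cdot 2^{d}$ (or a similar bound), by an edge-isoperimetric / vertex-boundary argument. The point is that a set $S$ with $|S| > 2^{k-1}$ has a large edge boundary in $Q_k$ by Harper's theorem, hence large \emph{vertex} boundary, hence the complement $\bar S$ sends many edges into $S$; averaging, some vertex of $S$ receives $\Omega(\log k / \,?)$ such edges — wait, but those are edges leaving $G$, not edges of $G$. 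The correct mechanism (Chung et al.'s) is rather: among the $2^{n-1}+1$ vertices of $G$, by pigeonhole over the $2^{n-k}$ subcubes, some subcube $C$ contains at least $2^{k-1}+1$ vertices of $G$; inside that $k$-cube, a counting argument forces a vertex of high degree, and one optimizes $k$. So the real work is the single-subcube lemma.

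For the single-subcube lemma, here is the count I would push through. Let $S = V(G)\cap C$ with $|S| \geq 2^{k-1}+1$, and suppose every vertex of $S$ has at most $d$ neighbours in $S$. Then the number of edges of $Q_k$ with exactly one endpoint in $S$ is at least $|S|(k-d) \geq (2^{k-1}+1)(k-d)$. On the other hand every vertex of $\bar S = C\setminus S$ has all $k$ of its $Q_k$-edges available, so the boundary is at most $k\,|\bar S| \le k(2^{k-1}-1)$. Combining, $(2^{k-1}+1)(k-d) \le k(2^{k-1}-1)$, which forces $d \geq$ something like $\tfrac{2k}{2^{k-1}}$ — that is far too weak. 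So a crude global boundary count does not suffice, and one instead needs the \emph{refined} estimate: iterate over nested subcubes, or use the fact (this is where Harper/compression enters) that the boundary of a set of size just above $2^{k-1}$ is not merely $\geq$ its complement's trivial bound but genuinely $\Omega(2^{k}/\sqrt k)$ or better, concentrated so that some vertex absorbs $\Omega(\log k)$ of it. I expect \textbf{this refined boundary-to-max-degree step to be the main obstacle}: translating an isoperimetric lower bound on the \emph{total} boundary into a \emph{pointwise} degree bound requires controlling how evenly the boundary edges distribute among vertices of $S$, and the honest way Chung et al. do it is a clever recursive/inductive decomposition that I would need to reconstruct carefully, peeling off one coordinate at a time and tracking how $|S|$ compares to half of the current subcube.

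For the \emph{upper bound construction}, the task is to exhibit an induced subgraph on exactly $2^{n-1}+1$ vertices with $\Delta(G) < \sqrt n + 1$, i.e. $\Delta(G) \le \lceil \sqrt n\rceil$. The natural candidate is to take $S$ to be one half of $Q_n$ defined by a parity-type condition and then adjust. Start with $E = \{x : \sum_i x_i \equiv 0\}$ under the $\{0,1\}$ model (half the cube, $2^{n-1}$ vertices, and note $Q_n[E]$ is an independent set — degree $0$), then add a single vertex $v$ from the other side; $v$ now has $n$ neighbours in $S$, far too many. Instead, one should tile: write $n = m^2$ (assume $n$ a perfect square; handle the general case by padding), partition the coordinates into $m$ blocks of size $m$, and inside each block take the Chung-style optimal construction for $Q_m$, namely a subset $A_m \subseteq Q_m$ with $|A_m| = 2^{m-1}+1$ and max degree $\le \lceil\sqrt m\rceil$ — no wait, that is circular. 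The clean construction is: let $A \subseteq Q_m$ be a perfect "half-plus-one" set realizing small degree in dimension $m$, but actually the standard trick is different — one takes $S$ to be the set of all $x\in Q_n$ such that the $m$-bit string formed by the block-parities lies in a fixed Hamming ball or a fixed subcube. I would set this up so that flipping one coordinate of $x$ changes exactly one block-parity, so the degree of $x$ in $G$ equals the degree of the corresponding block-parity vertex in the $m$-dimensional "pattern" subgraph; choosing the pattern to be half of $Q_m$ plus one vertex gives $|S| = 2^{n-m}(2^{m-1}) + (\text{correction}) $ — one tunes the correction to hit $2^{n-1}+1$ exactly, while every vertex of $S$ has at most (block size)$\,=m=\sqrt n$ neighbours from the "boundary" block plus the degree inside the pattern. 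Then verify $\Delta(G) \le m + O(1) < \sqrt n + 1$, possibly after a tiny adjustment of a single vertex. This half is routine once the right tiling is fixed, so I would present the construction explicitly and just check the degree count and the vertex count, deferring the harder isoperimetric argument above to the bulk of the section.
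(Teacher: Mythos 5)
Your proposal correctly diagnoses that the crude global boundary count is too weak for the lower bound, but it then stops short of the actual mechanism, and the upper-bound construction as sketched does not close.

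On the lower bound: you say the obstacle is converting a total-boundary (isoperimetric) estimate into a pointwise degree bound, and you defer the resolution to a recursive decomposition you would "need to reconstruct carefully." That gap is genuine, and the paper's argument in fact does not route through Harper's theorem or any boundary isoperimetry at all. It rests on two ideas you did not find. First, a volume lemma: any subgraph of $Q_n$ with average degree $\bar{d}$ has at least $2^{\bar{d}}$ vertices, proved by induction on $|V(G)|$ after splitting $Q_n$ into two $(n-1)$-subcubes $Q_1,Q_2$ and using $(|V_1|+|V_2|)\log_2(|V_1|+|V_2|) \ge |V_1|\log_2|V_1|+|V_2|\log_2|V_2|+2|V_1|$ for $|V_2|\ge |V_1|$. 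Second, fix a direction $i$ and partition the cube into $X_i$ (vertices $x\in V(G)$ with $x^{(i)}\in V(G)$), $Y_i$ (vertices $y\notin V(G)$ with $y^{(i)}\notin V(G)$), and $A_i$ (everything else). If $\Delta=\Delta(G)$, one shows any $x$ with $x,x^{(i)}\in X_i$ has at least $n-2\Delta+1$ neighbours in $Y_i$; since $|X_i|=|Y_i|$ (true once $|V(G)|=2^{n-1}$ and both $G$ and its complement use direction $i$), the average degree of the subgraph induced on $X_i\cup Y_i$ is at least $n-2\Delta+2$, so $|X_i|\ge 2^{n-2\Delta+1}$ by the volume lemma. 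Summing direction-$i$ edges over all $n$ directions and comparing against the trivial bound $\Delta\cdot 2^{n-1}$ on $|E(G)|$ gives $n\cdot 2^{n-2\Delta+1}\le\Delta\cdot 2^{n-1}$, which unwinds to $\Delta > \tfrac12\log n-\tfrac12\log\log n+\tfrac12$. This is proved for a $2^{n-1}$-vertex induced subgraph using all $n$ directions; passing from the $(2^{n-1}+1)$-vertex form of the theorem to that lemma is a further (small) reduction that your proposal also leaves unaddressed.

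On the upper bound: your block-partition instinct is in the right direction --- the paper does partition $[n]$ into $k\approx\sqrt{n}$ blocks $F_1,\dots,F_k$ each of size $\approx\sqrt{n}$ --- but the construction you sketch is not the right one, and the two steps you flag as needing adjustment ("tune the correction to hit $2^{n-1}+1$ exactly," "tiny adjustment of a single vertex") are exactly where parity-type constructions break: adding one vertex to a parity half-cube gives that vertex degree $n$. The paper's construction is $\mathbf{X}(\mathbf{F})$, the union of all even-cardinality subsets of $[n]$ containing some block $F_i$ with all odd-cardinality subsets containing no block. An inclusion--exclusion sieve gives $|\mathbf{X}(\mathbf{F})| = 2^{n-1}\pm 1$ (no ad hoc patching required), and a local argument bounds the induced maximum degree by $\max\{r(\mathbf{F}),t(\mathbf{F})\}$, where $r(\mathbf{F})$ is the largest block size and $t(\mathbf{F})$ the largest disjointly-representable subfamily size; both are at most $\lceil\sqrt{n}\rceil$ for the chosen partition, giving $\Delta(G)<\sqrt{n}+1$. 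Your "block-parities lying in a fixed Hamming ball or subcube" idea is a different object and would need its own degree and cardinality verification; it is not a rephrasing of the paper's construction.
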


\subsection{Proving the Upper bound}
Let's first denote $\{1, 2, \cdots, n\} := [n]$. To prove the upper bound, we consider $[n]$ and a family of sets $2^{[n]}$, that is, the set of all subsets of $[n]$. We observe that the natural bijection between the set of $n$-bit vectors $\{0, 1\}^n$ and $2^{[n]}$ is the map $\psi: \{0, 1\}^n \leftrightarrow 2^{[n]}$ such that the vector $\mathbf{x} = (x_1, x_2, \cdots, x_n)$ is mapped to $T = \{i \mid x_i = 1, 1 \leq i \leq n\}$ and vice versa. In other words, the $i$th bit of $\mathbf{x}$ denotes the inclusion of the $i$th natural number in the subset of $[n]$; if $x_i = 1$, then $T$ includes $i$.\newline

Therefore, we may extend the bijection between the $n$-bit vectors and $2^{[n]}$ to the construction of $Q_n$. We partition $Q_n$ into two vertex-disjoint subgraphs $G$ and $G'$ where $G$ has $2^{n-1} + 1$ vertices. We consider now a certain family of sets $\mathbf{X}$ such that $|\textbf{X}| = 2^{n-1} \pm 1$, where $\textbf{X} \subset 2^{[n]}$. We define the family of sets \textbf{X} over a finite collection of subsets $\textbf{F} \subset 2^{[n]}$ as: \begin{align*} \textbf{X}(\textbf{F}) = &\{S \subset [n] \, :  |S| = even, \exists \, F \in \textbf{F} \, \text{with} \,  F \subset S\} \\ &\cup \, \{S \subset [n] \, : |S| = odd, F \setminus S \neq \phi \, \text{for all} \,  F \in \textbf{F}\} \end{align*} That is, \textbf{X}(\textbf{F}) is the collection of all even sets which contain some $F \in \textbf{F}$ along with all odd sets which contain no $F \in \textbf{F}$.\newline

Let us consider \textbf{F} as the partition of $[n]$ given by $[n] = F_1 \cup F_2 \cdots \cup F_k$ such that $|k - \sqrt{n}| < 1$ and $||F_i| - \sqrt{n}| < 1$, $1 \leq i \leq k$. We now prove the existence of such a partition:
\begin{proof}
Let $k = \lceil \sqrt{n} \, \rceil$, and this implies that $|k - \sqrt{n}| < 1$. Now, denote $u: = \sqrt{n} \implies u^2 = n$. It's easy to see that there exists an $\epsilon$ with $0 \leq \epsilon < 1$ so that $\lfloor \sqrt{n} \rfloor = \sqrt{n} - \epsilon = u - \epsilon $ and $\lceil \sqrt{n} \, \rceil = \sqrt{n} - \epsilon + 1 = u - \epsilon + 1$.\newline
 
For the above partition to exist, we simply need to show that $\exists \, x, y \in \bZ^+$ such that $x \lfloor \sqrt{n} \rfloor + y \lceil \sqrt{n} \, \rceil = n$ with $x + y =  \lceil \sqrt{n} \, \rceil = u - \epsilon + 1$. Now, note that \begin{equation} x(u - \epsilon) + y(u - \epsilon + 1) = u^2 \end{equation} must have integer solutions. Observe that: \begin{align*} (3) &\implies x(x + y-1) + y(x + y) = u^2 \\ &\implies x^2 + y^2 + 2xy - x = u^2 \\ &\implies (x + y)^2 = u^2 + x \end{align*} Clearly, this has integer solutions as we may fix $u^2$ and $x$ and so vary $y$ to get a set of solutions.
\end{proof}

For such a partition of $[n]$, we wish to see if the following claims are true: \begin{enumerate}
    \item $|\mathbf{X}| = 2^{n-1} \pm 1$.
    \item For the subgraph induced by $\textbf{X}(\textbf{F})$ and $2^{[n]} \setminus \textbf{X}(\textbf{F})$, $\Delta \leq k$.
\end{enumerate}

Let the reader allow us to introduce some terminology here. The \textit{rank} of \textbf{F}, denoted $r(\textbf{F})$, is the largest size of an element in \textbf{F}. We denote $t(\textbf{F})$ to be the \textit{largest size of the disjointly representable subsystems} of \textbf{F}. That is, $t(\textbf{F})$ is the maximum value of $t$ such that one can find $F_1, F_2, \cdots, F_t \in \textbf{F}$ and $x_i \in F_i$, $1 \leq i \leq t$, such that $x_i \in F_j \leftrightarrow i = j$, for all $1 \leq i, j \leq k$, that is, no $F_i$ is contained in the union of the others.

In the partition chosen for \textbf{F} such that we had $k$ disjointly representable subsets of \textbf{F}, we note that $t(\textbf{F}) = k$ and $r(\textbf{F})$ is either $\lfloor \sqrt{n} \rfloor$ or $\lceil \sqrt{n} \, \rceil$ (How?). Since, the degree of a vertex corresponding to some $S \in \textbf{X}(\textbf{F})$ is the number of $S_i \in \textbf{X}(\textbf{F})$ such that $S$ and $S_i$ differ by only one element, the maximum degree $\Delta(G(\textbf{F}))$ is either bounded by $r(\textbf{F})$ or $t(\textbf{F})$. We can then easily see that $\Delta(G(\textbf{F})) < \sqrt{n}$.
\begin{lemma}
$\Delta(G(\textbf{F})) \leq \max\{r(\textbf{F}), t(\textbf{F})\}$
\end{lemma}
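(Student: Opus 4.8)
The plan is to bound $\deg_{G(\textbf{F})}(v)$ for an arbitrary vertex $v \leftrightarrow S \subseteq [n]$ by walking through the neighbours of $S$ in $Q_n$. The first thing I would record is that $S$ and $S'$ are adjacent in $Q_n$ exactly when $S'$ is obtained from $S$ by adjoining a single new element or by deleting a single element, so every neighbour of $S$ has parity opposite to $S$; this lets me split the analysis according to $|S| \bmod 2$. I would also observe that the graph induced on $2^{[n]} \setminus \textbf{X}(\textbf{F})$ is the parity-flipped mirror of the graph induced on $\textbf{X}(\textbf{F})$, so it is enough to treat $S \in \textbf{X}(\textbf{F})$.

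For $|S|$ even, membership in $\textbf{X}(\textbf{F})$ gives some $F_0 \in \textbf{F}$ with $F_0 \subseteq S$. I would first note that adjoining an element to $S$ can never produce a neighbour in $\textbf{X}(\textbf{F})$: the result is odd and still contains $F_0$. So all edges at $S$ come from deletions $S' = S \setminus \{x\}$, and for such an $S'$ (odd, $\subseteq S$) to lie in $\textbf{X}(\textbf{F})$ it must contain no member of $\textbf{F}$, which forces $x$ into every $F \in \textbf{F}$ with $F \subseteq S$. Hence the admissible $x$ form a subset of $\bigcap\{F \in \textbf{F} : F \subseteq S\} \subseteq F_0$, giving $\deg_{G(\textbf{F})}(S) \le |F_0| \le r(\textbf{F})$.

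For $|S|$ odd, $S$ contains no member of $\textbf{F}$, so deletions stay outside $\textbf{X}(\textbf{F})$ and every edge at $S$ comes from an adjunction $S' = S \cup \{x\}$ with $x \notin S$; for $S'$ (even) to lie in $\textbf{X}(\textbf{F})$ there must be $F_x \in \textbf{F}$ with $F_x \subseteq S'$, and since $S$ contains no member of $\textbf{F}$ this $F_x$ must satisfy $x \in F_x$ and $F_x \setminus \{x\} \subseteq S$. Fixing one such $F_x$ per admissible $x$, I would argue that the $F_x$'s with representatives $x$ form a disjointly representable subsystem of $\textbf{F}$: if $x \ne x'$ and $x \in F_{x'}$, then $x \in F_{x'} \setminus \{x'\} \subseteq S$, contradicting $x \notin S$. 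So $\deg_{G(\textbf{F})}(S)$, which is the number of admissible $x$, is at most $t(\textbf{F})$. Putting the two parities (and the mirrored complement cases) together yields $\Delta(G(\textbf{F})) \le \max\{r(\textbf{F}), t(\textbf{F})\}$.

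The main obstacle — really the only non-mechanical step — is the odd case: I need to be careful that the sets $F_x$ pulled out of the neighbours genuinely witness disjoint representability in the sense of the definition, i.e.\ that the chosen representatives $x$ satisfy $x \in F_{x'} \Leftrightarrow x = x'$ for all relevant pairs. Everything else is bookkeeping about parity and about which direction (adjoin versus delete) each hypercube edge runs in.
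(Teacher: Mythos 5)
Your proof is correct and follows essentially the same route as the paper's: split by the parity of $S$, bound the degree of an even vertex by $\bigl\lvert\bigcap\{F \in \textbf{F} : F \subseteq S\}\bigr\rvert \leq r(\textbf{F})$, and bound the degree of an odd vertex by exhibiting a disjointly representable subsystem $\{F_x\}$ with representatives $x$, giving $t(\textbf{F})$. Your write-up is in fact tighter than the paper's (which detours through a worked example in $Q_5$), and your explicit checks that adjunctions from even vertices and deletions from odd vertices never land in $\textbf{X}(\textbf{F})$, plus the parity-mirror observation handling the complement, fill in steps the paper leaves implicit.
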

\begin{remark}
The same result also holds true for $\Delta(G'(\textbf{F})).$
\end{remark}
\begin{proof}
An interesting thing to note is that this result holds for any family of subsets of $2^{[n]}$. To see this, consider the sets corresponding to the vertices: $(1\, 1\, 0\, 1\, 1)$, $(1\, 1\, 0\, 1\, 0)$, $ (0\, 1\, 0\, 1\, 1)$ of $G(\textbf{F})$ for some $\textbf{F} \subset 2^{[n]}$. The subsets $\{1, 2, 4\}$ and $\{2, 4, 5\}$ belong to $\textbf{X}(\textbf{F})$ so long as no subset of either belongs to \textbf{F}. Therefore, there are no even subsets in \textbf{F} which are contained in odd sets of $\textbf{X}(\textbf{F})$.\newline

Consequently, $(1\, 1\, 1\, 1\, 1)$ cannot belong to $G(\textbf{F})$ as it would contain some $F \in \textbf{F}$. Consider again the vertex $(1\, 1\, 0\, 1\, 1)$. It can have maximum degree $4$ (in the $5$-cube graph) if $(0\, 1\, 0\, 1\, 1)$, $(1\, 0\, 0\, 1\, 1)$,  $(1\, 1\, 0\, 0\, 1)$ and $(1\, 1\, 0\, 1\, 0)$ belong to $G(\textbf{F})$. However, if no subsets of the vertices adjacent to $(1\, 1\, 0\, 1\, 1)$ belong to $\textbf{F}$, then only the singleton elements $(1\, 0\, 0\, 0\, 0)$, $(0\, 1\, 0\, 0\, 0), \cdots, (0\, 0\, 0\, 0\, 1)$ may belong to $G(\textbf{F})$, each with degree at most $1$.\newline

Therefore, for any edge of $Q_n$ corresponding to $(S, S')$, where $S, S' \in \textbf{X}(\textbf{F})$ and $S$ is even, the odd set $S'$ is seen to be a proper subset of $S$. Since, $S$ and $S'$ differ by the inclusion of precisely one element of $[n]$, and the exclusion of that element from $S$ results in a subset not contained in $\textbf{F}$, then \begin{equation} \deg(S) \leq \bigl\lvert \bigcap \{F: \, F \in \textbf{F}\, , F \subset S\} \bigr\rvert \leq r(\textbf{F}) \end{equation}

Taking $S$ as the odd set, we instead get $S \subset S'$, and by the construction of $G(\textbf{F})$, there exists a $F \in \textbf{F}$ such that $F \subset S'$ and $F \not\subset S$. Suppose, we have the sets $S_1', S_2', \cdots, S_a'$ such that there is an edge between $S$ and $S_i'$, $1 \leq i \leq a$. Then, there must also be some sets $F_1, F_2, \cdots, F_a \in \textbf{F}$ which don't belong to $S$, but $F_i \in S_i'$. Since, $S$ and $S_i'$ differ by the inclusion of exactly one element $F_1, F_2, \cdots, F_a$ must be disjointly representable. Hence, $a \leq t(\textbf{F})$. Since the quantity $a$ gives the number of adjacent vertices, we have completed the proof of the lemma.
\end{proof}

Now, the cardinality of $\textbf{X}(\textbf{F})$ can be found using the sieve formula and is given as: \begin{equation} |\textbf{X}(\textbf{F})| = 2^{n-1} + (1-2\epsilon)\bigg[\sum_{\substack{F_i \in \textbf{F} \\ |F_i| = n}} 1 - \underset{|F_i \, \cup \, F_j| = n}{\sum\limits_{F_i \in \textbf{F}}\sum\limits_{F_j \in \textbf{F}}} 1 + \cdots \bigg] \label{sieve} \end{equation}
where $\epsilon = 0$ when $n$ is even, and equals $1$ when odd.\newline

Denote by $f(\textbf{F})$ the bracketed expression on the RHS of (\ref{sieve}). When we take \textbf{F} as the partition of $[n]$ into $F_1, F_2, \cdots, F_k$, $f(\textbf{F})$ can be seen to equal $(-1)^{k+1}$. And, also we can verify that $(1 - 2\epsilon) = (-1)^n$ giving: $$|\textbf{X}(\textbf{F})| = 2^{n-1} + (-1)^{n + k + 1}$$ thus proving claim $1$.\newline

Now, to prove claim $2$, since we are concerned only with partitions where one part contains more than half of the vertices, we consider the case where $f(\textbf{F}) \neq 0$, and prove this lemma: 
\begin{lemma}
Suppose $f(\textbf{F}) \neq 0$. Then, $\max\{r(\textbf{F}), t(\textbf{F})\} \geq \sqrt{n}.$
\end{lemma}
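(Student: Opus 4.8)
The plan is to extract from the hypothesis $f(\textbf{F}) \neq 0$ the single combinatorial fact we actually need: that $[n]$ admits a \emph{cover} by members of $\textbf{F}$. Unwinding the sieve expression \eqref{sieve}, $f(\textbf{F})$ is a signed count of the nonempty subfamilies of $\textbf{F}$ whose union is exactly $[n]$, each weighted by $\pm 1$ according to its size; if no subfamily of $\textbf{F}$ has union $[n]$, then every term in the bracketed sum vanishes and $f(\textbf{F}) = 0$. Hence $f(\textbf{F}) \neq 0$ forces the existence of $F_1, \dots, F_m \in \textbf{F}$ with $F_1 \cup \cdots \cup F_m = [n]$.

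First I would pass to a \emph{minimal} (irredundant) cover: starting from any cover and discarding one superfluous set at a time, I obtain $F_1, \dots, F_m \in \textbf{F}$ whose union is $[n]$ but such that no proper subfamily still covers $[n]$. The next step is the crucial observation that a minimal cover is automatically a disjointly representable subsystem in the sense of the definition of $t(\textbf{F})$: for each $i$, irredundancy yields an element $x_i \in [n] \setminus \bigcup_{j \neq i} F_j$, and since the whole family covers $[n]$ this $x_i$ must lie in $F_i$; thus $x_i \in F_j \leftrightarrow i = j$ for every $j$. Consequently $t(\textbf{F}) \geq m$.

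Finally I would count elements two ways. On the one hand the $F_i$ cover $[n]$, so $n = |F_1 \cup \cdots \cup F_m| \leq \sum_{i=1}^m |F_i| \leq m \cdot r(\textbf{F})$. Combining with $m \leq t(\textbf{F})$ gives $n \leq r(\textbf{F}) \cdot t(\textbf{F}) \leq \bigl(\max\{r(\textbf{F}), t(\textbf{F})\}\bigr)^2$, and taking square roots yields $\max\{r(\textbf{F}), t(\textbf{F})\} \geq \sqrt{n}$, as claimed.

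The only genuinely delicate point is the middle step — recognizing that irredundancy of the cover is precisely what produces a system of ``private'' representatives $x_i$ witnessing $t(\textbf{F}) \geq m$; everything else is bookkeeping with inclusion--exclusion and a union bound. One should also sanity-check the degenerate possibilities (for instance $[n] \in \textbf{F}$, which forces $m = 1$ and $r(\textbf{F}) = n \geq \sqrt{n}$ outright, or the parity cases in \eqref{sieve}), but these present no real obstacle.
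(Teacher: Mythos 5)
Your proof follows essentially the same path as the paper's: extract a cover of $[n]$ from $f(\textbf{F}) \neq 0$ via the sieve formula, pass to a minimal cover, observe it is disjointly representable so that $m \leq t(\textbf{F})$, and then count. Your presentation is tighter in two places: you read off the existence of a cover directly from the structure of the bracketed sum (the paper instead gives a somewhat muddled pigeon-hole remark culminating in the dubious claim $|\bigcap \textbf{F}| = \sqrt n$), and you replace the paper's case-split on $r(\textbf{F}) < \sqrt n$ by the clean chain $n \leq m\cdot r(\textbf{F}) \leq t(\textbf{F})\, r(\textbf{F}) \leq \bigl(\max\{r(\textbf{F}), t(\textbf{F})\}\bigr)^2$ — same idea, but with no trichotomy to track.
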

\begin{remark}
We choose the maximum of $r(\textbf{F})$ and $t(\textbf{F})$ since the degree is bounded by both values, and the greater value is the one which dominates the bound.
\end{remark}
\begin{proof}
Observe that the motivation for $f(\textbf{F}) \neq 0$ stems from our choice of a partition in which one subgraph of the hypercube contains strictly greater than half the vertices. Taking strictly more than half of the vertices implies that there is a vertex in every direction. Take any $n$-bit vector. Now, there are $2^{n-1}$ possible $n$-bit vectors such that every one of them has their $j$th component as $0$. Since, we are taking $2^{n-1} + 1$ vertices, by the pigeon-hole argument, there is at least one vector in every direction, that is, has a non-zero value for the $j$th component. Since vertices of $G(\textbf{F})$ correspond to the subsets of $2^{[n]}$, this implies that $\bigl\lvert \bigcap \textbf{F} \bigr\rvert = \sqrt{n}$.\newline

Let us choose a subfamily $\{F_1, F_2, \cdots, F_s\}$ of $\textbf{F}$ such that this family has the smallest number of sets such that $\bigcup\limits_{i=1}^{s} F_i = [n]$. Now, $r(\textbf{F}) < \sqrt{n}$, else the inequality $\max\{r(\textbf{F}), t(\textbf{F})\} \geq \sqrt{n}$ is trivial for $|F| \geq \sqrt{n}$. Then, we must check if $t(\textbf{F}) \geq \sqrt{n}$ in such a case. Since, $s$ is the minimum number such that $\bigcup\limits_{i=1}^{s} F_i = [n]$, so $\{F_1, F_2, \cdots, F_s\}$ must be disjointly representable, else there would be a contradiction to the minimality of $S$.\newline

Since $|F_i|$ cannot exceed $\sqrt{n}$, we clearly see that if $\max |F_i| < \sqrt{n}$, and if $s < \sqrt{n}$, then: $s\cdot \max\{|F_i|\} < |\, [n] \, | = n$, which is a contradiction since $\bigcup\limits_{i=1}^{s} F_i = [n]$. Therefore $s \geq \sqrt{n}$, and so: $$\sqrt{n} \leq \max\{r(\textbf{F}), t(\textbf{F})\}$$

Since $\sqrt{n} < n$ for $n \geq 1$, the maximal degree $\Delta(G) \leq \sqrt{n}$, and since both $\Delta(G)$ and $\sqrt{n}$ are bounded above by $\max\{r(\textbf{F}), t(\textbf{F})\}$, we can have a partition $G(\textbf{F})$ such that $\Delta(G) \ll \sqrt{n}$.
\end{proof}

\subsection{Proving the Lower bound}
To show (\ref{degGv}), we first need a lemma:
\begin{lemma}
Let $G$ be a subgraph of $Q_n$ with average degree $\bar{d}$. Then, $|V(G)| \geq 2^{\bar{d}}$.
\end{lemma}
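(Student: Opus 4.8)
The plan is to recast the statement in terms of edge counts and then run an induction on the dimension $n$. Write $m=|V(G)|$ and $e=|E(G)|$, so that the average degree is $\bar d = 2e/m$. Taking base-$2$ logarithms, the asserted inequality $|V(G)|\geq 2^{\bar d}$ is equivalent to $\log_2 m \geq 2e/m$, i.e.\ to
\[
e \;\leq\; \tfrac12\, m\log_2 m .
\]
So it suffices to prove: every subgraph of $Q_n$ on $m$ vertices has at most $\tfrac12 m\log_2 m$ edges (equality being attained by subcubes). I would prove this form by induction on $n$, the cases $n=0,1$ being immediate by inspection.

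For the inductive step, split the vertex set of $Q_n$ according to the value of the last coordinate into two copies $Q_{n-1}^{(0)}$ and $Q_{n-1}^{(1)}$ of $Q_{n-1}$. Let $G_0,G_1$ be the parts of $G$ lying in each copy, with $m_0,m_1$ vertices and $e_0,e_1$ edges, and let $e_{01}$ be the number of edges of $G$ joining the two halves. Then $m=m_0+m_1$ and $e=e_0+e_1+e_{01}$. The crucial point is that the crossing edges form a partial matching --- each vertex of $Q_{n-1}^{(0)}$ has a unique neighbour across --- so $e_{01}\leq\min(m_0,m_1)$. Applying the inductive hypothesis to $G_0$ and $G_1$ reduces everything to the purely numerical claim
\[
\tfrac12 m_0\log_2 m_0 + \tfrac12 m_1\log_2 m_1 + \min(m_0,m_1)\;\leq\;\tfrac12(m_0+m_1)\log_2(m_0+m_1),
\]
with the convention $0\log_2 0 = 0$ (so the cases $m_0=0$ or $m_1=0$, where $G$ sits inside one half, are trivial).

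To verify this last inequality, assume $m_0\leq m_1$, set $m=m_0+m_1$, and consider $g(x)=\tfrac12 x\log_2 x + x + \tfrac12(m-x)\log_2(m-x)$ on $[0,m/2]$; the left-hand side above is exactly $g(m_0)$, and the claim is $g(x)\leq\tfrac12 m\log_2 m$ there. One checks $g(0)=g(m/2)=\tfrac12 m\log_2 m$, while $g'(x)=\tfrac12\log_2\!\frac{4x}{m-x}$ vanishes only at $x=m/5$, being negative before and positive after; hence $g$ decreases then increases on $[0,m/2]$ and never exceeds its (equal) endpoint values. This closes the induction.

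\textbf{Expected main obstacle.} The combinatorial skeleton is routine; the only real content is the single-variable estimate for $g$ in the last step, together with the bookkeeping for the degenerate cases $m_i\in\{0,1\}$. The subtle point to get right is that $e_{01}$ must be bounded by the \emph{matching} bound $\min(m_0,m_1)$ rather than the useless $\max(m_0,m_1)$ --- this is precisely what makes the endpoint values of $g$ match $\tfrac12 m\log_2 m$ and the induction go through.
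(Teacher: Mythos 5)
Your proposal is correct and follows essentially the same route as the paper: split $Q_n$ into two subcubes, bound the crossing edges by the matching bound $\min(m_0,m_1)$, apply induction to each half, and finish with the inequality $\tfrac12 m_0\log_2 m_0+\tfrac12 m_1\log_2 m_1+\min(m_0,m_1)\le\tfrac12 m\log_2 m$. The only difference is that you actually verify this last inequality (your analysis of $g$ checks out), whereas the paper merely asserts it "using $\log_2$ arithmetic," so your write-up is if anything more complete.
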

\begin{proof}
Let us use induction on $|V(G)|$. We can split $Q_n$ into two $(n-1)$-dimensional subcubes $Q_1$ and $Q_2$ such that $V_1 = Q_1 \cap V(G) \neq \phi$ and $V_2 = Q_2 \cap V(G) \neq \phi$. Also suppose that $|V_2| \geq |V_1|$ and that there are $s$ edges between $V_1$ and $V_2$ in $G$, which would imply that $|V_1| \geq s$, because no more than one edge can come out of any vertex in $V_1$. Let the restriction of $G$ to $V_i\, (i= 1, 2)$, be denoted by $G_i$. The induction hypothesis now gives us: $$|V_i|\log_2|V_i| \geq \sum \deg_{G_i}(v) = \sum_{v \, \in \, V_i} \deg_G(v) - s$$ \begin{equation} \implies |V_1|\log_2|V_1| + |V_2|\log_2|V_2| + 2s \geq \sum_{v \, \in \, V(G)} \deg_G(v) \label{hello} \end{equation}
However, we also know that using $\log_2$ arithmetic and using the fact that $|V_2| \geq |V_1|$: $$(|V_1| + |V_2|)\log_2(|V_1| + |V_2|) \geq |V_1|\log_2|V_1| + |V_2|\log_2|V_2| + 2|V_1|$$ thus proving the claim.
\end{proof}

\begin{lemma}
Suppose $G$ is a $2^{n-1}$-vertex induced subgraph of $Q_n$ containing edges from all the $n$ directions. Then, $\Delta(G) > \frac12\log n - \frac12 \log\log n + \frac12$.
\end{lemma}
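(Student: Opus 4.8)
The plan is to prove the lemma by induction on $n$. First observe that the lemma at dimension $m$ implies the ``theorem form'' at dimension $m$: every induced subgraph $H\subseteq Q_m$ with $|V(H)|\ge 2^{m-1}+1$ satisfies $\tfrac12\log m-\tfrac12\log\log m+\tfrac12<\Delta(H)$. Indeed, such an $H$ has an edge in every direction $i$ — splitting $Q_m$ across coordinate $i$ into two copies of $Q_{m-1}$ of size $2^{m-1}$ each, the traces $A,B\subseteq Q_{m-1}$ of $V(H)$ have $|A|+|B|=|V(H)|>2^{m-1}$, so $A\cap B\neq\varnothing$, i.e.\ two vertices of $V(H)$ matched across coordinate $i$ form an edge — and one may delete vertices of $H$ one at a time down to exactly $2^{m-1}$ while never deleting one of the at most $2m$ vertices that currently lie in a unique edge of some direction (for $m$ beyond a small base case, $2m<2^{m-1}$, so such a deletion is always available), so no direction is destroyed. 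Thus the induction hypothesis (the lemma for all dimensions $<n$) supplies both forms below $n$, which is what I will feed to the recursion.

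For the inductive step, fix a coordinate and split $Q_n$ into two copies $Q',Q''$ of $Q_{n-1}$; let $A,B\subseteq Q_{n-1}$ be the traces of $V(G)$, so $|A|+|B|=2^{n-1}$, and since $G$ has an edge in the split direction, $A\cap B\neq\varnothing$. Write $d=\Delta(G)$. The crucial local fact is that a vertex lying in $A\cap B$ has a $G$-neighbour across the split direction, hence its degree \emph{inside} its own copy of $Q_{n-1}$ is at most $d-1$. The naive move — apply the induction hypothesis to whichever of $G[A],G[B]$ has more than $2^{n-2}$ vertices — only yields $d>\tfrac12\log(n-1)-\tfrac12\log\log(n-1)+\tfrac12$, and the per-step shortfall $\tfrac12\log\frac{n}{n-1}$ accumulates over the recursion, so it is not enough. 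The point of the argument is to charge a \emph{full unit} of maximum degree per step: by choosing the split coordinate well, and in the awkward cases splitting off several coordinates simultaneously, one exhibits a subcube $Q_m$ with $m=\Omega(n)$ — concretely one can arrange $m\ge n/4$ up to lower-order corrections — on which $G$ restricted to $Q_m$ has more than $2^{m-1}$ vertices \emph{and} maximum degree at most $d-1$, because the retained more-than-half part consists of vertices each of which lost an edge to the fixed coordinates. The average-degree lemma is exactly the tool that forces this: a subgraph on few vertices has small average degree and so cannot carry edges in too many distinct directions, which makes the overlaps $A\cap B$ (and their multi-coordinate analogues) as large as required.

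Iterating this ``spend one, shrink the dimension by a bounded factor'' step about $\tfrac12\log_2 n$ times drives the dimension down to $O(1)$ while the maximum-degree budget drops by about $\tfrac12\log_2 n$; in the final $O(1)$-dimensional cube the surviving subgraph still occupies more than half the vertices, hence contains an edge, hence has maximum degree at least $1$, and unwinding the inequalities gives $d\gtrsim\tfrac12\log_2 n-O(1)$. Replacing the crude factor $n/4$ by the exact accounting of how the vertex counts and dimensions evolve — and being precise about the additive constant from the base case — is what upgrades this to the stated $\tfrac12\log n-\tfrac12\log\log n+\tfrac12$.

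The hard part will be the recursion step itself: showing that inside a $2^{n-1}$-vertex induced subgraph of $Q_n$ with an edge in every direction one can always locate a subcube of dimension $\Omega(n)$ on which $G$ still occupies more than half the vertices while the maximum degree has strictly decreased. This is the one place where the two hypotheses — exactly $2^{n-1}$ vertices, and edges in all $n$ directions — genuinely interact, and the only place the average-degree lemma is invoked; the reduction at the start, the pigeonhole producing the edges in all directions, the base case, and the bookkeeping of the shrink factors are all routine.
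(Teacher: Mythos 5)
Your proposal defers its entire weight onto a recursion step that you do not prove and that, as stated, is very likely false. You need, at each stage, a subcube $Q_m$ with $m=\Omega(n)$ on which the restriction of $G$ still has more than $2^{m-1}$ vertices \emph{and} on which every such vertex has a $G$-neighbour among the fixed coordinates (so that the in-cube maximum degree drops to $d-1$). But the overlap $A\cap B$ across a single coordinate split satisfies only $|A\cap B|\geq |A|+|B|-2^{n-1}=0$; the hypothesis that $G$ has an edge in direction $i$ guarantees merely $A\cap B\neq\varnothing$, and $|A\cap B|$ can be as small as $1$. Nothing in your sketch shows how the average-degree lemma ``makes the overlaps as large as required,'' and the multi-coordinate version of the claim is even stronger. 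Since you yourself flag this as ``the hard part,'' the proposal is a plan rather than a proof, and the plan's central claim is unsupported.

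The paper's argument avoids this trap entirely and is not an induction on $n$. For each fixed direction $i$ it partitions $V(Q_n)$ into $X_i$ (endpoints of $G$-edges in direction $i$), $Y_i$ (the analogous set for the complement), and the rest $A_i$; a local counting argument shows each $x\in X_i$ has at least $n-2\Delta+1$ neighbours in $Y_i$, whence the induced subgraph on $X_i\cup Y_i$ has average degree at least $n-2\Delta+2$. The average-degree lemma ($|V|\geq 2^{\bar d}$) then gives $|X_i|\geq 2^{n-2\Delta+1}$, i.e.\ at least that many edges of $G$ in direction $i$. Summing over all $n$ directions and comparing with the trivial upper bound $\Delta\cdot 2^{n-1}$ on the number of edges of $G$ yields $n\cdot 2^{n-2\Delta+1}\leq \Delta\cdot 2^{n-1}$, which rearranges to the stated bound. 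If you want to salvage your approach you would have to replace the ``large overlap'' claim with something like the paper's lower bound on the number of edges per direction, at which point the induction becomes unnecessary.
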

\begin{remark}
How does proving this result imply (\ref{degGv})?
\end{remark}
\begin{proof}
Let $i \, \in \, [n]$ be an arbitrary but fixed dimension. Let us now construct three sets of vertices:
\begin{enumerate}
    \item $X_i = \{x\in V(G): x^{(i)} \in V(G)\}$, the set of endpoints of the edges of $G$ in direction $i$,
    \item $Y_i = \{y \notin V(G): y^{(i)} \notin V(G)\} \equiv \{y \in V(Q_n \setminus G): y^{(i)} \in V(Q_n \setminus G)\}$, and
    \item $A_i = V(Q_n) \setminus X_i \setminus Y_i$.
\end{enumerate}
Let $\Delta = \Delta(G)$ and consider a pair $x, x^{(i)} \in X_i$. Now:
\begin{claim}
$x$ has at least $n-2\Delta+1$ neighbours in $Y_i$.
\end{claim}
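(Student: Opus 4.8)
The plan is a two-sided neighbour count. Fix the direction $i$ and the edge $x,x^{(i)}\in X_i$, so both $x$ and $x^{(i)}$ lie in $V(G)$. I will show that among the $n-1$ neighbours $x^{(j)}$ of $x$ with $j\neq i$, at most $2\Delta-2$ fail to land in $Y_i$; the remaining $\geq (n-1)-(2\Delta-2)=n-2\Delta+1$ of them are then distinct neighbours of $x$ in $Y_i$ (none of them is $x^{(i)}$, which lies in $V(G)$, hence not in $Y_i$).

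First, bound how often $x^{(j)}\notin V(G)$ can fail. Since $\deg_G(x)\leq\Delta$ and one neighbour of $x$ inside $G$ is already accounted for — namely $x^{(i)}\in X_i\subseteq V(G)$ — at most $\Delta-1$ of the neighbours $x^{(j)}$ with $j\neq i$ belong to $V(G)$. Next, recall from the definition of $Y_i$ that a neighbour $x^{(j)}$ with $j\neq i$ lies in $Y_i$ precisely when $x^{(j)}\notin V(G)$ \emph{and} $\bigl(x^{(j)}\bigr)^{(i)}\notin V(G)$. The key identity is $\bigl(x^{(j)}\bigr)^{(i)}=\bigl(x^{(i)}\bigr)^{(j)}$ (flipping coordinates $i$ and $j$ commutes), so $\bigl(x^{(j)}\bigr)^{(i)}$ is exactly the direction-$j$ neighbour of $x^{(i)}$. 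Running the same argument with $x^{(i)}$ in place of $x$: since $x^{(i)}\in V(G)$, $\deg_G\bigl(x^{(i)}\bigr)\leq\Delta$, and $x$ is one of its $G$-neighbours, at most $\Delta-1$ of the vertices $\bigl(x^{(i)}\bigr)^{(j)}$ with $j\neq i$ belong to $V(G)$.

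Combining the two bounds by a union bound, the set of indices $j\neq i$ for which $x^{(j)}\notin Y_i$ — that is, $x^{(j)}\in V(G)$ or $\bigl(x^{(j)}\bigr)^{(i)}\in V(G)$ — has size at most $(\Delta-1)+(\Delta-1)=2\Delta-2$. Hence at least $(n-1)-(2\Delta-2)=n-2\Delta+1$ of the directions $j\neq i$ give neighbours $x^{(j)}\in Y_i$, which is the claim. I do not expect a genuine obstacle here; the only point requiring care is the bookkeeping that yields $n-2\Delta+1$ rather than $n-2\Delta-1$, which hinges on using that both $x$ and $x^{(i)}$ already have the guaranteed $G$-neighbour (each other), sharpening the two crude bounds from $\Delta$ to $\Delta-1$.
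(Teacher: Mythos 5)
Your proof is correct and follows essentially the same approach as the paper: bound the directions $j\ne i$ with $x^{(j)}\in V(G)$ by $\Delta-1$, bound those with $x^{(ij)}\in V(G)$ by $\Delta-1$ (each sharpened by one because $x$ and $x^{(i)}$ are already $G$-neighbours of each other), and union-bound to get at most $2\Delta-2$ excluded directions, leaving at least $n-2\Delta+1$ landing in $Y_i$. If anything your phrasing is slightly tighter than the paper's: by counting directly the indices $j\ne i$ with $x^{(j)}\in V(G)$ or $x^{(ij)}\in V(G)$, you automatically cover both $A_i$ and $X_i$, whereas the paper's steps nominally speak only of neighbours of $x$ ``in $A_i$'' and so, read literally, would overlook the case $x^{(j)}\in X_i$.
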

\begin{proof}
Let $y \in A_i$. Now, using the definition of $\Delta(G)$ and from the fact that either one of $y$ or $y^{(i)}$ is in $V(G)$, we have the following:
\begin{enumerate}
    \item There are at most $\Delta - 1$ neighbours of $x$ in $A_i$ (because we know $x$ and $x^{(i)}$ are adjacent). Let's denote them by $\{x^{(j)}: \#\{j\} \leq \Delta - 1\}$. 
    \item There are at most $\Delta - 1$ neighbours of $x^{(i)}$ neighbours in $A_i$, which are nothing but $\{x^{(ij)}: \#\{j\} \leq \Delta - 1\}$, where $x^{(ij)}$ agrees with $x$ in every dimension except $i$ and $j$.
    \item Consequently, there are at most $2(\Delta - 1)$ neighbours of $x$ in $A_i$.
    \item There are at least $n-1-2(\Delta - 1) = n - 2\Delta + 1$ neighbours of $x$ in $Y_i$.
\end{enumerate}
\end{proof}
We observe that $|V(G)| = 2^{n-1} \implies |V(Q_n \setminus G)| = 2^{n-1}$ and that if $G$ contains edges from all $n$ directions, then $Q_n \setminus G$ also contains edges from all the $n$ directions. Consequently, $|X_i| = |Y_i|$ and it's obvious that they must be greater than $0$.
Using these facts, we obtain: $$|E(G(X_i \cup Y_i))| \geq \frac12|X_i| + \frac12 |Y_i| + (n- 2\Delta + 1)|X_i|$$ from adding the edges within the graphs $X_i$, $Y_i$ and also between them. After some calculations and using the fact that $|X_i| = |Y_i|$, we get: $$\bar{d}(G(X_i \cup Y_i)) \geq n - 2\Delta + 2$$ which when used with Lemma 3.8 yields: $|X_i| \geq 2^{n - 2\Delta + 1}$.\newline

Thus, there are at least $2^{n-2\Delta + 1}$ edges in direction $i$ in the graph $G$. Summing over all dimensions, we observe that at least $n \cdot 2^{n - 2\Delta + 1}$ edges are in the graph $G$. On the other hand, this number cannot exceed $\Delta \cdot 2^{n-1}$. Setting $n \cdot 2^{n - 2\Delta + 1} \leq \Delta \cdot 2^{n-1}$, a straightforward computation shows that $$\Delta(G) \geq \frac12\log n - \frac12 \log\log n + \frac12$$ This proves the theorem.
\end{proof}

\section{Nisan-Szegedy's result}
\label{sec:Nisan}

\textbf{\underline{Aim:}} In this section, we will look at Nisan, Szegedy's results \cite{Nisz} that establish a polynomial relation between the decision tree complexity of Boolean functions, the degree of the multilinear polynomial representing it, and the smallest degree of the polynomial approximating the Boolean function.  

\subsection{Introduction}
The Sensitivity Conjecture was formally proposed by Nisan and Szegedy after characterising the degree of Boolean functions represented by multilinear polynomials in terms of the Boolean function's combinatorial properties. If we define a Boolean function $f$ as $f: \{F, T\}^n \rightarrow \{F, T\}$, then we can encode $T$ as $1$ and $F$ as $0$, thus creating a mapping from a subset of $\bS^n$ to a subset of $\bS$.

\begin{definition}
A real multivariate polynomial $p: \bS^n \rightarrow \bS$ \textit{represents} $f$ if for all $x \in \{0, 1\}^n$, $f(x) = p(x)$. 
\end{definition}
There is always a unique multilinear polynomial representing a given Boolean function. The results established by Nisan and Szegedy concern themselves with the most basic parameter in representing Boolean functions as polynomials, its degree.

\begin{definition}
The \textit{degree} of a Boolean function $f$, denoted by $\deg(f)$, as the degree of the unique multilinear real polynomial that represents $f$ exactly.
\end{definition}

The first result we would like to show is establishing the lower bound on the degree of a Boolean function $f$ in terms of the number of variables $n$.
\begin{theorem} \label{thm:neat}
Let $f$ be a Boolean function that depends on $n$ variables. Then, \begin{equation} \deg(f) \geq \log_2 n - O(\log \log n) \label{degree}
\end{equation}
\end{theorem}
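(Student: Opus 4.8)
The plan is to deduce Theorem~\ref{thm:neat} from a structural ``junta''-type lemma: a Boolean function of low degree cannot depend on too many variables. Concretely, I would first isolate the lemma that if $\deg(f)=d$ then $f$ depends on at most $d\cdot 2^{d-1}$ of its variables, and then show this forces the stated inequality. For the latter step, since $f$ depends on all $n$ of its variables, the lemma gives $n\le d\cdot 2^{d-1}$ with $d=\deg(f)$, hence $\log_2 n\le d-1+\log_2 d$. If $d\ge\log_2 n$ the conclusion is immediate; otherwise $\log_2 d<\log_2\log_2 n$, so $d>\log_2 n-\log_2\log_2 n+1$, which is $\log_2 n-O(\log\log n)$. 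Thus the whole theorem rests on the junta lemma, and note that the base $2$ in the bound $2^{d-1}$ is exactly what produces the leading constant $1$ in front of $\log_2 n$; a bound of the form $2^{cd}$ with $c>1$ would only give the weaker $\tfrac1c\log_2 n$.

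To prove the lemma I would induct on $d$. The base case $d=1$ is a direct computation: a multilinear polynomial $c_0+\sum_i c_i x_i$ taking values in $\{0,1\}$ on the cube cannot have two nonzero coefficients, because restricting all but two relevant coordinates to $0$ yields the four values $c_0$, $c_0+c_i$, $c_0+c_j$, $c_0+c_i+c_j$, which cannot all lie in $\{0,1\}$ unless one of $c_i,c_j$ is $0$; hence a degree-$1$ function depends on at most $1=1\cdot 2^{0}$ variable. For the inductive step I would fix a monomial of degree exactly $d$ in the representing polynomial, supported on a coordinate set $S$ with $|S|=d$, and restrict coordinates in $S$ in all possible ways, producing subfunctions $f_\rho$ on the remaining coordinates. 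Every coordinate $f$ depends on either lies in $S$ or is relevant for some $f_\rho$ (witnessed by restricting an input that exhibits sensitivity in that coordinate), so if each $f_\rho$ has degree at most $d-1$, the induction hypothesis applied to the (at most $2^{d-1}$) functions $f_\rho$ together with the $\le d$ coordinates of $S$ bounds the relevant coordinates of $f$; a short arithmetic check that the resulting total stays $\le d\cdot 2^{d-1}$ then closes the induction.

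The main obstacle is precisely the requirement that the $f_\rho$ have degree at most $d-1$: restricting coordinates does not in general lower the degree, since there can be degree-$d$ monomials disjoint from $S$. I would handle this by a case analysis on the maximal-degree monomials. If they all share a common coordinate $i$, then $f|_{x_i=0}$ has degree $\le d-1$, and since $\partial_i f:=f|_{x_i=1}-f|_{x_i=0}$ also has degree $\le d-1$, so does $f|_{x_i=1}$; applying the induction hypothesis to these two restrictions and adding the single coordinate $i$ gives at most $1+2(d-1)2^{d-2}=1+(d-1)2^{d-1}\le d\cdot 2^{d-1}$, as needed. The remaining case, in which no coordinate is common to all maximal monomials, is the delicate one: I expect it to require either choosing the restricted coordinates of $S$ so as to kill every degree-$d$ monomial at once, or an inductive peeling that first reduces to the ``common coordinate'' situation. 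Controlling the count in this case so that it remains within $d\cdot 2^{d-1}$ is where the real work lies, and it is the step I would scrutinize most carefully.
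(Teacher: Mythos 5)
Your reduction of the theorem to a junta-type bound is sound and in fact lands on exactly the inequality the paper derives: the paper's proof also ends with $n\cdot 2^{-d}\le d$, i.e.\ $n\le d\cdot 2^{d}$, and then the same logarithmic manipulation. The problem is your proposed proof of the junta lemma itself, which has two concrete gaps. First, as you acknowledge, restricting the support $S$ of one maximal-degree monomial need not lower the degree when other degree-$d$ monomials are disjoint from $S$; you leave this case open, and neither of your suggested fixes (a hitting set for all maximal monomials, or an inductive peeling) is worked out --- a hitting set can be large, so the number of restrictions $2^{|H|}$ destroys the count. Second, even granting that every restriction $f_\rho$ has degree at most $d-1$, the union bound you invoke does not close: there are $2^{d}$ restrictions (not $2^{d-1}$), each contributing up to $(d-1)2^{d-2}$ relevant variables by the inductive hypothesis, for a total of order $(d-1)4^{d-1}$, which vastly exceeds $d\cdot 2^{d-1}$. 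Your arithmetic only works in the special two-restriction case where all maximal monomials share a coordinate.

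The paper sidesteps the entire structural induction with an influence argument, which is also the standard proof of the junta lemma you want. For each relevant variable $i$, the discrete derivative $f^{i}$ is a nonzero multilinear polynomial of degree at most $d$, so by the Schwartz--Zippel lemma it is nonzero with probability at least $2^{-d}$ on a uniform input; hence $\mathrm{Inf}_i(f)\ge 2^{-d}$ for every $i$. On the other hand, the Fourier identity $\sum_i \mathrm{Inf}_i(f)=\sum_S |S|\hat f(S)^2$ together with Parseval gives $\sum_i \mathrm{Inf}_i(f)\le d$. Combining, $n\cdot 2^{-d}\le d$, and the theorem follows. If you want to keep your two-step structure, replace your induction by this influence computation as the proof of the junta lemma; as written, the heart of your argument is missing.
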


We next show a relation between the degree of the Boolean function $f$ and the decision tree complexity of $f$ along with the smallest degree of a polynomial that approximates $f$, denoted by $\widetilde{\deg}(f)$. 
\begin{definition}
The polynomial $p$ \textit{approximates} $f$ if for every $x \in \{0, 1\}^n$, we have that \begin{equation} |p(x) - f(x)| < \frac13 \label{approx} \end{equation}
\end{definition}
The approximate degree of $f$, that is, $\widetilde{\deg}(f)$ is defined to be the minimum over all polynomials $p$ that approximate $f$ of the degree $p$. Hence, we have the following inequalities:
\begin{theorem} For every Boolean function $f$, \begin{equation}  \deg(f) \leq D(f) \leq 16\deg(f)^8 \label{degD} \end{equation} \end{theorem}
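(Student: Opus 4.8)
The plan is to establish the two inequalities in \eqref{degD} separately. The left inequality $\deg(f) \leq D(f)$ is the easy half: given an optimal decision tree $A$ of depth $D(f)$ computing $f$, I would sum over all leaves $\ell$ labelled $1$ the indicator polynomial of the event ``the computation path reaches $\ell$''. Each such indicator is a product of at most $D(f)$ factors of the form $x_i$ or $1-x_i$ (one per query along the root-to-$\ell$ path), hence a multilinear polynomial of degree at most $D(f)$; the paths to distinct leaves are mutually exclusive and exhaustive over $\{0,1\}^n$, so the sum of these indicators over $1$-leaves agrees with $f$ on every Boolean input. By uniqueness of the multilinear representation this sum \emph{is} the polynomial representing $f$, so $\deg(f) \leq D(f)$.

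The right inequality $D(f) \leq 16\deg(f)^8$ is the substantive half, and the strategy is to chain together several polynomial relations among complexity measures, so that the chain begins and ends with $D(f)$ and $\deg(f)$. Concretely I would prove, with $d = \deg(f)$: (i) $bs(f) \leq 2d^2$, by taking a block-sensitive input $x$ with disjoint blocks $B_1,\dots,B_t$, restricting $p$ to the subcube spanned by the blocks (collapsing each block to a single variable), and applying the fact that a nonconstant multilinear polynomial on the Boolean cube in $m$ variables, all of whose variables are relevant, has degree $\Omega(\sqrt{m})$ — this is exactly the content available to us from Theorem~\ref{thm:neat}/the lower-bound circle of ideas — forcing $t \leq 2d^2$; (ii) $C(f) \leq bs(f)\cdot d$ (or the cleaner $C(f) \le bs(f)^2$), by showing that a minimal block-sensitivity collection at $x$ yields a certificate of the stated size, using that $\deg(f|_{\text{restriction}}) \le d$ controls the size of each minimal sensitive block; and (iii) $D(f) \leq C_0(f)\cdot C_1(f) \leq C(f)^2$, the classical decision-tree-from-certificates argument: repeatedly query the variables of a $1$-certificate consistent with the responses seen so far; each query either finishes (input agrees with the certificate, output $1$) or flips at least one bit of every $0$-certificate, and since each $0$-certificate has size at most $C_0(f)$ this recursion has depth at most $C_0(f)\cdot C_1(f)$. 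Composing (i)–(iii) gives $D(f) \le C(f)^2 \le (bs(f)\cdot d)^2 \le (2d^2 \cdot d)^2 = 4d^6$, comfortably inside $16d^8$; alternatively the looser bookkeeping $D(f) \le bs(f)^3 \le (2d^2)^3 = 8d^6$ or, being wasteful with constants at each stage, lands at the stated $16\deg(f)^8$.

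The main obstacle is step (i), the bound $bs(f) \le 2\deg(f)^2$: everything else is either a one-line polynomial construction or a classical adversary/recursion argument with no number-theoretic subtlety. For (i) the delicate point is the reduction to a fully-relevant multilinear polynomial on $t$ variables — one must verify that after setting coordinates outside $\bigcup B_j$ to their values in $x$ and identifying the coordinates within each $B_j$, the resulting polynomial $q$ on $t$ variables genuinely depends on all $t$ of them (this is where disjointness of the blocks and $f(x)\neq f(x^{(B_j)})$ are used), that $\deg q \le \deg f$, and that $q$ is not constant; then the quadratic lower bound on the degree of such a $q$ (which is the same phenomenon that powers Theorem~\ref{thm:neat}, now applied in the contrapositive direction: small degree forces few relevant variables) yields $t = O(d^2)$. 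Once $bs(f) = O(\deg(f)^2)$ is in hand, the passage $bs \to C \to D$ is routine, and collecting exponents proves \eqref{degD}.
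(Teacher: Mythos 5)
Your overall architecture coincides with the paper's: the left inequality is the same sum-over-$1$-leaves construction the paper alludes to, and the right inequality is obtained by composing $bs(f)\leq 2\deg(f)^2$ with a polynomial bound on $D(f)$ in terms of $bs(f)$. The only structural difference is that the paper simply imports Nisan's bound $D(f)\leq bs(f)^4$ from \cite{earlier} (equation (\ref{bsD})), whereas you sketch the classical certificate arguments ($C(f)\leq bs(f)^2$ and $D(f)\leq C_0(f)\,C_1(f)$) yourself; that route is legitimate and even yields a better exponent, though you should rely on the standard chain $C(f)\leq s(f)\,bs(f)\leq bs(f)^2$ rather than the unproved variant $C(f)\leq bs(f)\cdot\deg(f)$.

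There is, however, a genuine gap in your step (i), which is exactly the step you identify as the crux. You justify $t\leq 2d^2$ by invoking the claim that a multilinear polynomial on the cube that depends on all $m$ of its variables has degree $\Omega(\sqrt{m})$, and you attribute this to the circle of ideas around Theorem \ref{thm:neat}. That claim is false: Theorem \ref{thm:neat} gives only $\deg(f)\geq\log_2 m-O(\log\log m)$ for fully relevant functions, and a logarithmic degree is actually achievable (e.g.\ by address-type functions), so ``$q$ depends on all $t$ variables'' would only yield the exponential bound $bs(f)\leq 2^{O(\deg f)}$, not a polynomial one. What your restriction actually produces is much stronger than full relevance: setting the coordinates outside $\bigcup_j B_j$ to their values in $x$ and collapsing each block to one variable gives a multilinear $q$ on $t$ variables with $q(\mathbf{0})=f(x)=0$ and $q(e_j)=f(x^{(B_j)})=1$ for every $j$, i.e.\ a Boolean function fully sensitive at $\mathbf{0}$. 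It is this property that feeds into the symmetrization-plus-Markov machinery (the paper's Lemmas 4.11--4.14, used in Lemma 4.16) and gives $\deg q\geq\sqrt{t/2}$, hence $t\leq 2\deg(f)^2$. With that substitution your chain closes and lands within the stated $16\deg(f)^8$.
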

\begin{theorem} There exists a constant $c$ such that for every Boolean function, we have: \begin{equation} \widetilde{\deg}(f) \leq \deg(f) \leq D(f) \leq c\cdot \widetilde{\deg}(f)^8 \label{power} \end{equation} \end{theorem}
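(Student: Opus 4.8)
The plan is to prove the three inequalities in (\ref{power}) separately; the first two are essentially free and all the work is concentrated in the last one. Throughout I write $C_0(f),C_1(f)$ for the $0$- and $1$-certificate complexities, so that $C(f)=\max\{C_0(f),C_1(f)\}$. First, for $\widetilde{\deg}(f)\le\deg(f)$: the unique multilinear polynomial $p$ representing $f$ satisfies $|p(x)-f(x)|=0<\tfrac13$ for every $x\in\{0,1\}^n$, so it also \emph{approximates} $f$ in the sense of (\ref{approx}), whence $\widetilde{\deg}(f)\le\deg p=\deg(f)$. The inequality $\deg(f)\le D(f)$ is already contained in (\ref{degD}), but it also follows directly: from a depth-$D(f)$ decision tree for $f$, attach to each $1$-leaf the product of the literals (each $x_i$ or $1-x_i$) read along its root-to-leaf path and sum these products over all $1$-leaves; this produces a polynomial agreeing with $f$ on $\{0,1\}^n$ of degree at most $D(f)$, and minimality of the multilinear representation gives $\deg(f)\le D(f)$.

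The substance is $D(f)\le c\,\widetilde{\deg}(f)^8$, which I would obtain by chaining $D(f)\le C(f)^2\le bs(f)^4\le\bigl(c_1\,\widetilde{\deg}(f)^2\bigr)^4$ through the following three lemmas.
\begin{enumerate}
  \item[(i)] \emph{Decision trees from certificates: $D(f)\le C_0(f)C_1(f)\le C(f)^2$.} Build a decision tree greedily: query all bits of a minimum-size $1$-certificate $\rho$; on the branch where the answers agree with $\rho$ the value of $f$ is forced to $1$ (a leaf at depth $|\rho|\le C_1(f)$), while on every other branch the subfunction obtained by the revealed restriction still has $1$-certificate complexity at most $C_1(f)$ and $0$-certificate complexity smaller by at least one (any $0$-certificate of the subfunction must disagree with $\rho$ at a queried bit, hence shrinks). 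Iterating bounds the depth by $C_0(f)C_1(f)$.
  \item[(ii)] \emph{Certificates from block sensitivity: $C(f)\le bs(f)^2$.} Fix $x$, say with $f(x)=1$, and take a \emph{maximal} family $B_1,\dots,B_t$ of pairwise disjoint \emph{minimal} sensitive blocks at $x$, so $t\le bs(f)$. If $B$ is a minimal sensitive block then flipping $B$ flips $f$ while flipping $B\setminus\{i\}$ does not, so at $x^{(B)}$ every coordinate of $B$ is sensitive; hence $|B|\le s(f)\le bs(f)$. Moreover the restriction of $x$ to $B_1\cup\dots\cup B_t$ is a $1$-certificate: an input agreeing with $x$ there but with $f$-value $0$ would differ from $x$ on a set disjoint from all the $B_i$ that is again a sensitive block, contradicting maximality. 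Thus $C_x(f)\le t\cdot s(f)\le bs(f)^2$, and maximising over $x$ gives $C(f)\le bs(f)^2$.
  \item[(iii)] \emph{Block sensitivity from approximate degree: $bs(f)\le c_1\widetilde{\deg}(f)^2$.} Let $p$ approximate $f$ with $\deg p=\widetilde{\deg}(f)=:d$, and let $x,B_1,\dots,B_b$ witness $bs(f)=b$. Introduce variables $y_1,\dots,y_b\in\{0,1\}$ and substitute into $p$, for each $i\in B_j$, the value ``$x_i$ with its bit flipped iff $y_j=1$''; this yields a multilinear $q(y_1,\dots,y_b)$ of degree $\le d$, bounded in $(-\tfrac13,\tfrac43)$ on all of $\{0,1\}^b$, with $q(\mathbf 0)$ within $\tfrac13$ of $f(x)$ and each $q(e_j)$ within $\tfrac13$ of $f(x^{(B_j)})\neq f(x)$. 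Symmetrising, $Q(k):=\mathbb{E}_{|y|=k}[q(y)]$ is univariate of degree $\le d$, bounded by $\tfrac43$ at the integers $0,1,\dots,b$, with $|Q(0)-Q(1)|\ge\tfrac13$. Either $d\ge\sqrt b$ and we are done, or $d$ is small enough for the Ehlich--Zeller / Coppersmith--Rivlin estimate to bound $Q$ by an absolute constant on the whole interval $[0,b]$; in the latter case the mean value theorem gives a point in $[0,1]$ with $|Q'|\ge\tfrac13$, and Markov's inequality for polynomials on $[0,b]$ forces $\tfrac13\le O(d^2/b)$, i.e.\ $b\le c_1 d^2$.
\end{enumerate}
Combining (i)--(iii), $D(f)\le C(f)^2\le bs(f)^4\le c_1^4\,\widetilde{\deg}(f)^8$, which is (\ref{power}) with $c=c_1^4$ (and, specialised to exact representation via $bs(f)\le 2\deg(f)^2$, recovers the constant $16$ of (\ref{degD})).

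I expect the only genuine obstacle to be step (iii), and within it the delicate point is upgrading the boundedness of the symmetrised polynomial $Q$ at the integer points $0,1,\dots,b$ to boundedness on the entire interval $[0,b]$ — an issue that simply does not arise for exact representations, where $Q$ is Boolean-valued at those points. This is exactly where the Ehlich--Zeller / Coppersmith--Rivlin inequality (together with the harmless case split on whether $d$ already exceeds $\sqrt b$) enters, and it is the step I would write out most carefully; steps (i) and (ii) are combinatorial bookkeeping of the same flavour already used implicitly in (\ref{degD}).
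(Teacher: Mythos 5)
Your proposal is correct and ends up on the same overall chain as the paper — pass to the symmetrised univariate polynomial, extract a derivative bound via the mean value theorem and Markov's inequality, and then close via $D(f)\le bs(f)^4$ — but you fill in two steps in a way the paper does not. First, the paper simply cites Nisan's $D(f)\le bs(f)^4$ as a black box (equation \ref{bsD}); you actually derive it from $D(f)\le C_0(f)C_1(f)$ together with $C(f)\le bs(f)\cdot s(f)\le bs(f)^2$, which is a nice piece of self-containedness (though one small imprecision: it is the $0$-certificates of the \emph{original} function that must meet the chosen $1$-certificate $\rho$; the shrinking of $C_0$ for the subfunction then follows because the intersection bits are already fixed). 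Second, for the interval-boundedness issue in step (iii), you reach for the Ehlich--Zeller / Coppersmith--Rivlin estimate plus a case split on $d\ge\sqrt b$. The paper's Lemma 4.13 handles exactly this obstacle more economically: setting $c'=\max_{[0,n]}|\tilde p'|$, the polynomial stays within $c'/2$ of its values at the nearest integer, so $|\tilde p|\le b_2-b_1+c'$ on the whole interval; Markov then gives the self-referential bound $c'\le \deg(\tilde p)^2(c'+b_2-b_1)/n$, which is monotone in $c'$ and so yields $\deg(\tilde p)^2\ge cn/(c+b_2-b_1)$ with no case split and no growth estimate beyond Markov. Your route is sound but pulls in heavier machinery than the paper needs at that point; it would be worth replacing the Coppersmith--Rivlin invocation with Lemma 4.13's bootstrap to keep the argument elementary.
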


\subsection{Some Fourier Analysis}
In this subsection, we state and prove some results needed for our discussions from \cite{BoolFun}. Consider the Fourier transform representation, that is, considering the Boolean function $f$ as $f:\{-1, 1\}^n \rightarrow \{-1, 1\}$. Interpret the domain $\{-1, 1\}^n$ of $f$ as $2^n$ points lying in $\bS^n$ and think of $f$ as giving a $\pm 1$ labeling to each of these points. There is a familiar method for interpolating such data points with a polynomial. We'll let the reader refer to \cite{BoolFun} for more details.\newline

Generally, we observe that this polynomial is always "multilinear" - that is, it has no variables squared, cubed, etc. In general, a multilinear polynomial over variables $x_1, \cdots, x_n$ has $2^n$ terms, one for each monomial $\prod_{i \in S} x_i$, where $S \subseteq [n]$ (Note: $\prod_{i \in \phi} x_i$ denotes 1). Hence: 
\begin{lemma}
Every function $f: \{-1, 1\}^n \rightarrow \bS$ can be expressed uniquely as a multilinear polynomial, \begin{equation}
    f(x) = \sum_{S \, \subseteq \, [n]} c_S \prod_{i \in S} x_i \label{Fourier}
\end{equation} where each $c_S$ is a real number.
\end{lemma}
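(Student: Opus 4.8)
The plan is to prove existence by an explicit interpolation and uniqueness by a dimension count; both halves are elementary.

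For existence, I would start from the point-indicator polynomials. For each $a=(a_1,\dots,a_n)\in\{-1,1\}^n$ set $\delta_a(x)=\prod_{i=1}^n \frac{1+a_i x_i}{2}$; one checks that $\delta_a(a)=1$ and $\delta_a(x)=0$ for every other $x\in\{-1,1\}^n$, since whenever $x\neq a$ some factor $\frac{1+a_i x_i}{2}$ vanishes. Hence the polynomial $\sum_{a\in\{-1,1\}^n} f(a)\,\delta_a$ agrees with $f$ on the whole cube. Expanding each $\delta_a$ into monomials and then repeatedly using the identity $x_i^2=1$, which holds on the domain $\{-1,1\}^n$, collapses this sum to a multilinear polynomial $\sum_{S\subseteq[n]} c_S\prod_{i\in S}x_i$ of the desired shape.

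For uniqueness, I would count dimensions. The space of all functions $\{-1,1\}^n\to\mathbb{R}$ is a real vector space of dimension $2^n$ (the indicators of the $2^n$ points are a basis), and the multilinear monomials $\prod_{i\in S}x_i$, $S\subseteq[n]$, form a set of exactly $2^n$ elements of it which span it by the existence part; a spanning family whose size equals the dimension is a basis, so the coefficients $c_S$ are uniquely determined. I would probably instead present uniqueness via the inner product $\langle g,h\rangle=2^{-n}\sum_{x\in\{-1,1\}^n}g(x)h(x)$, since that machinery is reused later: because $x_i^2=1$ on the domain one has $\prod_{i\in S}x_i\cdot\prod_{i\in T}x_i=\prod_{i\in S\triangle T}x_i$, and $2^{-n}\sum_{x}\prod_{i\in U}x_i$ equals $1$ when $U=\emptyset$ and $0$ otherwise, so $\langle \prod_{i\in S}x_i,\prod_{i\in T}x_i\rangle=[S=T]$; an orthonormal family is linearly independent, hence a basis.

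The only delicate point — and really the only thing to watch — is the passage to multilinearity: I must emphasize that ``the polynomial $p$ represents $f$'' means agreement of \emph{values} on $\{-1,1\}^n$, not equality of formal polynomials, so that replacing $x_i^2$ by $1$ never alters which function is represented. The interpolation identity and the orthogonality/dimension bookkeeping are routine.
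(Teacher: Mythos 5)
Your proof is correct and is exactly the standard interpolation argument that the paper itself only gestures at (it defers the details to O'Donnell's book and omits uniqueness entirely), so you have in fact supplied more than the paper does. One tiny simplification: the indicator $\delta_a(x)=\prod_{i=1}^n\frac{1+a_ix_i}{2}$ is already multilinear upon expansion, since distinct factors involve distinct variables, so the reduction via $x_i^2=1$ is never actually needed in the existence step.
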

This expression (\ref{Fourier}) is precisely the "Fourier expansion" of $f$. It's a convention to write the coefficient $c_S$ as $\hat{f}(S)$ and the monomial $\prod_{i \in S} x_i$ as $\chi_S(x)$. Thus we finally have: \begin{equation}
    f(x) = \sum_{S \, \subseteq \, [n]} \hat{f}(S) \chi_S(x) \label{ConFour}
\end{equation}

Let us denote $\mathbf{x} = (\mathbf{x_1}, \cdots, \mathbf{x_n})$ to denote a uniformly random string from $\{-1, 1\}^n$ where each $\mathbf{x_i}$ is a random variable. We can think of generating such an $\mathbf{x}$ by choosing each bit $\mathbf{x_i}$ independently and uniformly from $\{-1, 1\}$. The result we'll need for our discussions is:
\begin{theorem}
(Parseval) For any $f: \{-1, 1\}^n \rightarrow \bS$, $$ \sum_{S \, \subseteq \, [n]} \hat{f}(S)^2 = \mathbf{E}_{\mathbf{x}}[f(\mathbf{x})^2]$$
\end{theorem}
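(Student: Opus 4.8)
The plan is to exhibit the $2^n$ monomials $\chi_S$ as an \emph{orthonormal system} for the space of real-valued functions on $\{-1,1\}^n$ under the inner product $\langle g, h\rangle := \mathbf{E}_{\mathbf{x}}[g(\mathbf{x})h(\mathbf{x})]$, and then read Parseval off as the Pythagorean identity in the Fourier expansion $f = \sum_{S\subseteq[n]}\hat f(S)\chi_S$ already supplied by the preceding lemma.

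First I would record the elementary computation that for any $U \subseteq [n]$,
$$\mathbf{E}_{\mathbf{x}}[\chi_U(\mathbf{x})] = \mathbf{E}_{\mathbf{x}}\Big[\prod_{i \in U} \mathbf{x_i}\Big] = \prod_{i \in U} \mathbf{E}[\mathbf{x_i}] = \begin{cases} 1 & U = \phi, \\ 0 & U \neq \phi, \end{cases}$$
where the middle equality uses that the bits $\mathbf{x_i}$ are mutually independent and the last uses $\mathbf{E}[\mathbf{x_i}] = \tfrac12(1) + \tfrac12(-1) = 0$. Since $x_i^2 = 1$ on $\{-1,1\}$, one has $\chi_S(x)\chi_T(x) = \chi_{S \,\triangle\, T}(x)$, where $S \triangle T$ denotes the symmetric difference; combining this with the display gives the orthonormality relation $\langle \chi_S, \chi_T\rangle = \mathbf{E}_{\mathbf{x}}[\chi_{S\triangle T}(\mathbf{x})] = \mathbf{1}[S = T]$.

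With this in hand I would take the Fourier expansion $f = \sum_{S \subseteq [n]} \hat{f}(S)\chi_S$, square it, and expand, using linearity of expectation to pull the finite double sum outside and the orthonormality relation to annihilate every off-diagonal term:
$$\mathbf{E}_{\mathbf{x}}[f(\mathbf{x})^2] = \mathbf{E}_{\mathbf{x}}\Big[\sum_{S \subseteq [n]}\sum_{T \subseteq [n]} \hat{f}(S)\hat{f}(T)\,\chi_S(\mathbf{x})\chi_T(\mathbf{x})\Big] = \sum_{S,T} \hat{f}(S)\hat{f}(T)\,\langle\chi_S,\chi_T\rangle = \sum_{S \subseteq [n]} \hat{f}(S)^2,$$
which is exactly the claimed identity. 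The proof carries no genuine obstacle: the one point that deserves care is the justification of $\mathbf{E}[\chi_U] = \mathbf{1}[U=\phi]$, namely that the expectation of the product of the coordinate bits factors as the product of their expectations, and this is precisely where the independence and uniformity of the $\mathbf{x_i}$ are used. Everything else is bookkeeping with finitely many terms, so I would not belabor it.
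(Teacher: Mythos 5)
Your proof is correct and follows essentially the same route as the paper's: expand $f^2$ via the Fourier representation, use $\chi_S\chi_T = \chi_{S\triangle T}$ (from $x_i^2=1$) and independence of the coordinates to get $\mathbf{E}[\chi_U] = \mathbf{1}[U=\phi]$, and collapse the double sum to the diagonal. The only cosmetic difference is that you package the two facts as an explicit orthonormality lemma before expanding, whereas the paper performs the same computation inline.
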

\begin{proof}
By the Fourier expansion of $f$, \begin{align*} \mathbf{E}_{\mathbf{x}}[f(\mathbf{x})^2] &= \mathbf{E}_{\mathbf{x}}\Bigg[\Bigg(\sum_{S \, \subseteq \, [n]} \hat{f}(S)\chi_S(\mathbf{x})\Bigg)^2 \Bigg] \\ &=\mathbf{E}_{\mathbf{x}}\bigg[\sum_{S, T \, \subseteq \, [n]} \hat{f}(S)\hat{f}(T)\chi_S(\mathbf{x})\chi_T(\mathbf{x})\bigg] \\&=\sum_{S, T \, \subseteq \, [n]} \hat{f}(S)\hat{f}(T)\mathbf{E}_{\mathbf{x}}[\chi_S(\mathbf{x})\chi_T(\mathbf{x})]
\end{align*}

Recalling that $\chi_S(x)$ denotes $\prod_{i \in S} x_i$, we see that $\chi_S(x)\chi_T(x) = \chi_{S\, \Delta T}(x)$. This is because whenever $i \in S \cap T$, we get an $x_i^2$, which can be replaced by $1$. So, we get \begin{align*}
    \sum_{S, T \, \subseteq \, [n]} \hat{f}(S)\hat{f}(T)\mathbf{E}_{\mathbf{x}}[\chi_S(\mathbf{x})\chi_T(\mathbf{x})] = \sum_{S, T \, \subseteq \, [n]} \hat{f}(S)\hat{f}(T) \mathbf{E}_{\mathbf{x}} [\chi_{S \, \Delta T}(\mathbf{x})]
\end{align*}
We now observe that $\mathbf{E}_{\mathbf{x}}[\chi_U(\mathbf{x})] = 0$, unless $U = \phi$ in which case we get a $1$. This holds because by independence of the random bits $\mathbf{x_1}, \mathbf{x_2}, \cdots, \mathbf{x_n}$, we have $\mathbf{E}_{\mathbf{x}}[\chi_U(\mathbf{x})] = \mathbf{E}_{\mathbf{x}}[\prod_{i \in x_i} \mathbf{x_i}] = \prod_{i \in U} \mathbf{E}_{\mathbf{x}}[\mathbf{x_i}]$ and each $\mathbf{E}[\mathbf{x_i}] = 0$. \newline

Finally, we deduce that \begin{align*}
    \sum_{S, T \, \subseteq \, [n]} \hat{f}(S)\hat{f}(T) \mathbf{E}_{\mathbf{x}} [\chi_{S \, \Delta T}(\mathbf{x})] = \sum_{S \Delta T = \phi}\hat{f}(S)\hat{f}(T) = \sum_{S \, \subseteq \, [n]} \hat{f}(S)^2
\end{align*} as claimed.
\end{proof}


Finally for Boolean functions $f: \{-1, 1\}^n \rightarrow \{-1, 1\}$, we have $f(x)^2 = 1$ for every $x$, hence:
\begin{corollary}
(Parseval's equality) If $f: \{-1, 1\}^n \rightarrow \{-1, 1\}$, then $$\sum_{S \, \subseteq \, [n]} \hat{f}(S)^2 = 1$$
\end{corollary}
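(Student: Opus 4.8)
The plan is to invoke Parseval's theorem directly and simply specialise it to the case where the codomain of $f$ is $\{-1,1\}$ rather than all of $\mathbb{R}$. Parseval's theorem, proved above, states that for \emph{any} $f:\{-1,1\}^n \to \mathbb{R}$ we have $\sum_{S \subseteq [n]} \hat{f}(S)^2 = \mathbf{E}_{\mathbf{x}}[f(\mathbf{x})^2]$, so the whole task reduces to evaluating the right-hand side under the stronger hypothesis that $f$ is $\{-1,1\}$-valued.

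The key observation is that when $f(x) \in \{-1,1\}$ for every $x \in \{-1,1\}^n$, we have $f(x)^2 = (\pm 1)^2 = 1$ at every point of the cube. Hence the random variable $f(\mathbf{x})^2$ is identically equal to $1$, and its expectation over a uniformly random $\mathbf{x} \in \{-1,1\}^n$ is therefore $\mathbf{E}_{\mathbf{x}}[f(\mathbf{x})^2] = 1$. Substituting this value back into Parseval's identity immediately gives $\sum_{S \subseteq [n]} \hat{f}(S)^2 = 1$, which is the claim.

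There is essentially no obstacle to overcome here: the corollary is a one-line consequence of the already-established theorem, the only input being the pointwise identity $f^2 \equiv 1$. The single point worth flagging is that this step genuinely uses that $f$ is Boolean-valued (i.e.\ takes values in $\{-1,1\}$), not merely real-valued — for a general $f:\{-1,1\}^n \to \mathbb{R}$ the sum of squared Fourier coefficients equals the mean square $\mathbf{E}_{\mathbf{x}}[f(\mathbf{x})^2]$, which in general differs from $1$.
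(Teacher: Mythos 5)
Your proof is correct and is exactly the paper's argument: apply Parseval's theorem and note that $f(x)^2 = 1$ pointwise when $f$ is $\{-1,1\}$-valued, so the mean square on the right-hand side is $1$. Nothing more is needed.
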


\subsection{Proving Theorem 4.3}
We will make use of an important definition due to Kalai \cite{KKL}:
\begin{definition}
For a Boolean function on $n$ variables and a variable $x_i$, the \textit{influence} of $x_i$ on $f$, denoted by $\text{Inf}_{i}(f)$, is defined to be: $$\text{Inf}_i(f) = \Pr[f(x) \neq f(x^{(i)})]$$  where $x^{(i)}$ denotes the string $x \in \{0, 1\}^n$ with the $i$th bit flipped and $x$ is chosen uniformly in $\{\text{false}, \text{true}\}^n$. 
\end{definition}
In words, $\text{Inf}_i(f)$ is the probability that flipping the $i$th coordinate flips the value of the function. After some computation, we observe that:
\begin{lemma}
For any Boolean function $f$ on $n$ variables, using the Fourier transform representation of $f$, we have: \begin{equation}
    \sum_{i=1}^{n} \text{Inf}_{i}(f) = \sum_{S} |S|\hat{f}(S)^2 \label{Infour}
\end{equation}
\end{lemma}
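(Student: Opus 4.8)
The plan is to prove the stronger per-coordinate identity $\text{Inf}_i(f) = \sum_{S \ni i} \hat{f}(S)^2$ for each fixed $i \in [n]$, and then sum over $i$. Once this is in hand the lemma is immediate: in the double sum $\sum_{i=1}^n \sum_{S \ni i} \hat{f}(S)^2$ each term $\hat{f}(S)^2$ is counted exactly once for every $i \in S$, hence exactly $|S|$ times in total, so the sum equals $\sum_S |S|\hat{f}(S)^2$.

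To obtain the per-coordinate identity I would pass to the Fourier ($\pm 1$) picture and introduce the discrete $i$-th derivative $g(x) := \tfrac{1}{2}\bigl(f(x) - f(x^{(i)})\bigr)$. For a $\{-1,1\}$-valued $f$ this $g$ takes values in $\{-1,0,1\}$: it is $0$ precisely when $f(x) = f(x^{(i)})$ and equals $\pm 1$ otherwise, so $g(x)^2$ is exactly the $0/1$ indicator of the event $f(x) \neq f(x^{(i)})$. Taking expectations over a uniformly random $\mathbf{x}$ therefore gives $\text{Inf}_i(f) = \mathbf{E}_{\mathbf{x}}[g(\mathbf{x})^2]$.

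Next I would read off the Fourier expansion of $g$ from that of $f$. Substituting $f(x) = \sum_S \hat{f}(S)\chi_S(x)$ into the definition of $g$ and using that $\chi_S(x^{(i)}) = \chi_S(x)$ when $i \notin S$ while $\chi_S(x^{(i)}) = -\chi_S(x)$ when $i \in S$, every monomial not containing $i$ cancels and every monomial containing $i$ survives with its original coefficient, so $g(x) = \sum_{S \ni i} \hat{f}(S)\chi_S(x)$. Applying Parseval's theorem to $g$ then yields $\mathbf{E}_{\mathbf{x}}[g(\mathbf{x})^2] = \sum_{S \ni i} \hat{f}(S)^2$, which combined with the previous paragraph gives $\text{Inf}_i(f) = \sum_{S \ni i} \hat{f}(S)^2$. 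Summing over $i = 1, \dots, n$ and interchanging the order of summation completes the proof.

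There is no genuinely hard step here; the only point requiring care is the bookkeeping in translating between the $\{0,1\}$-valued description of influence used in the definition and the $\{-1,1\}$-valued Fourier representation, together with the verification that $g(x)^2$ really is the $0/1$ indicator. This last fact uses the Boolean-valuedness of $f$ in an essential way, so the argument is phrased in terms of $\mathbf{E}_{\mathbf{x}}[g(\mathbf{x})^2]$ rather than asserting an analogous identity for arbitrary real-valued $f$.
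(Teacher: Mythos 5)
Your argument is correct, and it is the standard derivation (found, for instance, in O'Donnell's \emph{Analysis of Boolean Functions}, which is the paper's reference \cite{BoolFun} for this material). The paper itself does not supply a proof for this lemma---it merely writes ``After some computation, we observe that:''---so there is nothing to compare against directly, but your route (introduce the discrete derivative $g(x)=\tfrac12\bigl(f(x)-f(x^{(i)})\bigr)$, observe $g^2$ is the $0/1$ indicator of disagreement so $\mathrm{Inf}_i(f)=\mathbf{E}[g(\mathbf{x})^2]$, read off $\hat g(S)=\hat f(S)\cdot\mathbf{1}[i\in S]$ from $\chi_S(x^{(i)})=(-1)^{\mathbf{1}[i\in S]}\chi_S(x)$, invoke the Parseval identity the paper has already proved for arbitrary real-valued $f$, and then sum over $i$ interchanging the order of summation) is precisely the computation the paper elides. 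Your closing caveat about the $\{0,1\}$ versus $\{-1,1\}$ encoding is well placed and is handled correctly: the influence is a probability of a coordinate flip changing the output, which is encoding-independent, while the Fourier-analytic steps are carried out in the $\{-1,1\}$ picture where Parseval applies.
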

Combining this lemma with Parseval's equality, we can conclude that:
\begin{corollary}
For any Boolean function $f$, \begin{equation} \sum_{i=1}^{n} \text{Inf}_{i}(f) \leq \deg(f) \label{Infdeg} \end{equation}
\end{corollary}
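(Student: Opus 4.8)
The plan is to simply chain together the two facts already in hand: the identity $\sum_{i=1}^{n}\text{Inf}_i(f)=\sum_{S}|S|\hat f(S)^2$ from Lemma \ref{Infour}'s lemma, and Parseval's equality $\sum_{S\subseteq[n]}\hat f(S)^2=1$ for $f:\{-1,1\}^n\to\{-1,1\}$. The one extra observation needed is a translation between the two notions of ``degree'' floating around: the degree of the unique multilinear real polynomial representing $f$ equals $\max\{|S|:\hat f(S)\neq 0\}$, because the Fourier expansion $f(x)=\sum_S \hat f(S)\chi_S(x)$ \emph{is} that multilinear polynomial (after the affine change of variables between $\{0,1\}$ and $\{-1,1\}$, which does not change the degree). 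Hence $\hat f(S)=0$ whenever $|S|>\deg(f)$.

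Given this, I would argue as follows. Starting from the influence identity, split the sum over $S$ according to whether $\hat f(S)$ vanishes; only sets with $|S|\le\deg(f)$ contribute, so
\begin{equation*}
\sum_{i=1}^{n}\text{Inf}_i(f)=\sum_{S\subseteq[n]}|S|\,\hat f(S)^2=\sum_{\substack{S\subseteq[n]\\ |S|\le \deg(f)}}|S|\,\hat f(S)^2\le \deg(f)\sum_{S\subseteq[n]}\hat f(S)^2.
\end{equation*}
Applying Parseval's equality to bound the remaining sum by $1$ gives $\sum_{i=1}^{n}\text{Inf}_i(f)\le\deg(f)$, which is exactly \eqref{Infdeg}.

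There is essentially no hard part here; the result is a two-line consequence of the preceding lemma and corollary. The only place that warrants a sentence of care is the identification $\deg(f)=\max\{|S|:\hat f(S)\neq 0\}$, i.e.\ making sure the reader sees that the ``degree'' appearing in the statement (Definition of $\deg(f)$ via the multilinear polynomial over $\{0,1\}$) and the ``Fourier degree'' used in the displayed inequality coincide; I would note that passing from the $\{0,1\}$-representation to the $\{-1,1\}$-representation is an invertible linear substitution $x_i\mapsto\tfrac{1-x_i}{2}$ on each coordinate, hence degree-preserving, so both equal the top Fourier level. Everything else is monotonicity of the sum and a direct invocation of Parseval.
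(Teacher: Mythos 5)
Your argument is exactly the one the paper has in mind: the corollary is stated as following by ``combining'' Lemma 4.9 ($\sum_i\text{Inf}_i(f)=\sum_S|S|\hat f(S)^2$) with Parseval, and your chain of inequalities is precisely that combination. The extra sentence you add, identifying $\deg(f)$ with the top nonzero Fourier level via the degree-preserving affine change between $\{0,1\}$ and $\{-1,1\}$, is a correct and welcome clarification of something the paper leaves implicit, but it does not constitute a different route.
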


Now, we need the Schwartz-Zippel lemma that gives an upper bound for the number of $\{-1, 1\}$ zeroes of any multilinear polynomial.
\begin{lemma}
Let $p(x_1, \cdots, x_n)$ be a non-zero multilinear polynomial of degree $d$. If we choose $x_1, \cdots, x_n$ at random in $\{-1, 1\}$, then:
$$\Pr[p(x_1, \cdots, x_n) \neq 0] \geq 2^{-d} \label{probs}$$\qed
\end{lemma}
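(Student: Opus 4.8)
The plan is to prove the bound by induction on the degree $d$. For the base case $d = 0$ a non-zero multilinear polynomial is a non-zero constant, so $\Pr[p \neq 0] = 1 = 2^{-0}$ and there is nothing to do. For the inductive step I assume $d \geq 1$ and that the statement holds for every non-zero multilinear polynomial of degree at most $d-1$. Since $\deg(p) = d \geq 1$, there is a monomial $\prod_{i \in S} x_i$ of $p$ with $|S| = d \geq 1$; picking $j \in S$ and relabelling so that $j = n$, the variable $x_n$ genuinely occurs in $p$. Collecting monomials according to whether they contain $x_n$, I would write
\[
 p(x_1, \dots, x_n) = x_n \, q(x_1, \dots, x_{n-1}) + r(x_1, \dots, x_{n-1}),
\]
where $q$ and $r$ are multilinear in $x_1, \dots, x_{n-1}$. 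Two facts drive the argument: first, $q \not\equiv 0$, because some monomial of $p$ contains $x_n$ and, divided by $x_n$, it is a monomial of $q$; second, $\deg(q) \leq d-1$, since every monomial of $x_n q$ is $x_n$ times a monomial of $q$ and has size at most $\deg(p) = d$.

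Next I would condition on the values of $x_1, \dots, x_{n-1}$, which are drawn independently of $x_n$. By the induction hypothesis applied to $q$ we have $\Pr[q(x_1,\dots,x_{n-1}) \neq 0] \geq 2^{-(d-1)}$. Fix any such values with $q \neq 0$ and view $p$ as a function of the single remaining variable $x_n$: it is the affine function $x_n \mapsto x_n q + r$ with non-zero leading coefficient, so its two values $q + r$ (at $x_n = 1$) and $-q + r$ (at $x_n = -1$) cannot both vanish — their difference is $2q \neq 0$. Hence at least one of the two equally likely choices of $x_n$ gives $p \neq 0$, i.e. $\Pr[p \neq 0 \mid q \neq 0] \geq \tfrac12$. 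Multiplying,
\[
 \Pr[p \neq 0] \;\geq\; \Pr[q \neq 0]\cdot \tfrac12 \;\geq\; 2^{-(d-1)} \cdot \tfrac12 \;=\; 2^{-d},
\]
which completes the induction.

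The argument is largely bookkeeping once the right decomposition in the last occurring variable is spotted, so there is no deep obstacle. The one point genuinely requiring care — and the place where the special structure of the domain $\{-1,1\}$ enters — is the final step: on $\{-1,1\}$ a multilinear polynomial is affine in each individual variable, so a non-zero leading coefficient forces a non-zero value on at least half of the two-point fibre, and it is precisely this clean factor $\tfrac12$ per unit of degree that yields the sharp exponent $2^{-d}$ (over a larger domain $D$ one would only obtain a $1 - d/|D|$ type estimate). The remaining things worth stating explicitly are why $q \not\equiv 0$, why $\deg(q) \le d-1$, and that the conditioning on $x_1,\dots,x_{n-1}$ is legitimate because the coordinates are sampled independently.
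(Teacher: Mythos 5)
The paper never actually supplies a proof of this lemma; it only remarks that the reader should ``prove this lemma by an induction on $n$ and writing $p$ as a linear combination of two polynomials.'' Your argument is correct and rests on exactly the decomposition the paper hints at --- isolate one variable, write $p = x_n q + r$, apply the induction hypothesis to $q$, and observe that conditional on $q \neq 0$ the map $x_n \mapsto x_n q + r$ vanishes at most once on the two-point fibre $\{-1,1\}$. The one genuine difference is the choice of induction parameter: you induct on $d$ rather than on $n$, and you pay for this by first locating a variable $x_j$ that appears in a top-degree monomial before splitting. That small extra step buys you something: it guarantees $q \not\equiv 0$ immediately and gives $\deg q \leq d-1$, so you never meet the degenerate case. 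An induction on $n$, by contrast, would split off the last variable regardless of whether it occurs in $p$, and would therefore need a second branch handling $q \equiv 0$ (there $p = r$ has the same degree $d$ but one fewer variable, and the induction still closes). Both routes are sound and of the same depth; yours is a cleaner single-case induction, while the paper's suggested route is the textbook Schwartz--Zippel template. One cosmetic remark: since $q$ lives in $n-1$ variables, your induction hypothesis on $d$ must implicitly quantify over all numbers of variables simultaneously --- your phrasing ``every non-zero multilinear polynomial of degree at most $d-1$'' does this, but it is worth making explicit that the lemma is being proved for all $n$ at once.
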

We will leave it to the reader to prove this lemma by an induction on $n$ and writing $p$ as a linear combination of two polynomials.

\begin{proof}[Proof of Theorem \ref{thm:neat}]
For each $i$ define a function $f^i$ on $n-1$ variables as follows: $$f^{i}(x_1, \cdots, x_{i-1}, x_{i+1}, \cdots, x_n) = f(x_1, \cdots, -1, x_{i+1}, \cdots, x_n) - f(x_1, \cdots, 1, x_{i+1}, \cdots, x_n)$$
 
Now from the definition of influence we  easily get that: $$\text{Inf}_{i}(f) = \Pr[f^{i}(x_1, \cdots, x_{i-1}, x_{i+1}, \cdots, x_n) \neq 0]$$ where $x_1, \cdots, x_{i_1}, x_{i+1}, \cdots, x_n$ are chosen at random in $\{-1, 1\}$.\newline
 
Since $f$ depends on all the variables, we have that for every $i$, $f^{i}$ is not identically zero, and thus we can use Lemma \ref{probs} and conclude that for all $i$, $\text{Inf}_{i}(f) \geq 2^{-d}$. On the other hand, from (\ref{Infdeg}) it follows that $\sum_{i} \text{Inf}_{i}(f) \leq d$. Combining these two bounds gives us the required result.
\end{proof}

\subsection{Proving Theorems 4.5 and 4.6}
Now, we will return to the representation of $\text{true}=1$ and $\text{false}=0$. Also, we'll need the notion of symmetric polynomials. 

\begin{definition}
A polynomial $f(x_1, x_2, \cdots, x_n)$ is \textit{symmetric} if $$f(x_1, \cdots, x_n) = f(x_{\sigma(1)}, \cdots, x_{\sigma(n)})$$ for any permutation $\sigma$ of $[n] = \{1, \cdots, n\}$.
\end{definition}We will next use the method of symmetrization. Let $p: \bS^n \rightarrow \bS$ be a multivariate polynomial.

\begin{definition}
The \textit{symmetrization} of $p$ is $$p^{\text{sym}}(x_1, \cdots, x_n) = \frac{\sum_{\sigma \in S_n} p(x_{\sigma(1)}, \cdots, x_{\sigma(n)})}{n!}$$ 
\end{definition}
Since, the number of permutations of $[n]$ is $n!$, we get a sum of $n!$ terms each term resolving to give $0$ or $1$. 

\begin{example}
Let us consider a polynomial $p:\{0, 1\}^2 \rightarrow \bS$ given by $p(x_1, x_2) = x_1 + x_1x_2$. By the laws of Boolean algebra we know that: $x_1 + x_1x_2 = x_1(1 + x_2) = x_1$. Then, we get: $$p^{\text{sym}}(x_1, x_2) = \frac{x_1 + x_2}{2}$$
\end{example}

Hence, the important point to note here is that if we are only interested in inputs $x \in \{0, 1\}^n$ then $p^{\text{sym}}$ turns out to depend only upon $x_1 + \cdots + x_n$. We can thus represent it as a univariate polynomial of $x_1 + \cdots + x_n$ (see \cite{percep}):
\begin{lemma}
Taking $p:\bS^n \rightarrow \bS$ to be a multivariate polynomial, then there exists a unique univariate polynomial $\tilde{p}: \bS \rightarrow \bS$ such that for all $x_1, \cdots , x_n \in \{0, 1\}^n$, we have: $$p^{\text{sym}}(x_1, \cdots, x_n) = \tilde{p}(x_1 + \cdots + x_n)$$ Moreover, $\deg(\tilde{p}) \leq \deg(p)$.
\end{lemma}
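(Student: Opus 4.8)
The plan is to use that symmetrization is a linear operator and that on the Boolean cube every monomial collapses to a multilinear one of no larger degree, thereby reducing everything to the symmetrization of a single multilinear monomial, which turns out to be a scalar multiple of an elementary symmetric polynomial, and those restrict to binomial coefficients of $x_1+\cdots+x_n$.

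First I would write $p=\sum_{\alpha}c_{\alpha}x^{\alpha}$ as a sum of monomials. Since $p\mapsto p(x_{\sigma(1)},\dots,x_{\sigma(n)})$ is linear in $p$, we get $p^{\mathrm{sym}}=\sum_{\alpha}c_{\alpha}(x^{\alpha})^{\mathrm{sym}}$. On inputs in $\{0,1\}^n$ we have $x_i^k=x_i$ for every $k\ge 1$, so each $x^{\alpha}$ agrees there with the multilinear monomial $m_{S_\alpha}(x):=\prod_{i\in S_\alpha}x_i$, where $S_\alpha=\{i:\alpha_i\ge 1\}$; crucially $|S_\alpha|\le\deg(x^{\alpha})\le\deg(p)$, so this replacement does not increase the degree. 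Hence it suffices to treat $p$ as a sum of multilinear monomials of degree at most $d:=\deg(p)$.

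Next, for a multilinear monomial $m_S(x)=\prod_{i\in S}x_i$ with $|S|=j$, averaging over $S_n$ gives $\frac{1}{n!}\sum_{\sigma}\prod_{i\in S}x_{\sigma(i)}$; each $j$-subset $T\subseteq[n]$ equals $\sigma(S)$ for exactly $j!(n-j)!$ permutations $\sigma$, so this sum equals $\binom{n}{j}^{-1}e_j(x)$, where $e_j(x)=\sum_{|T|=j}\prod_{i\in T}x_i$ is the $j$-th elementary symmetric polynomial. Now I would invoke the combinatorial identity that for $x\in\{0,1\}^n$ with $x_1+\cdots+x_n=k$ exactly $\binom{k}{j}$ of the monomials of $e_j$ evaluate to $1$, i.e.\ $e_j(x)=\binom{x_1+\cdots+x_n}{j}$ on the cube, and $\binom{t}{j}=\frac{1}{j!}\,t(t-1)\cdots(t-j+1)$ is a univariate polynomial in $t$ of degree $j$. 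Summing over the monomials of $p$ then yields $p^{\mathrm{sym}}(x)=\tilde p(x_1+\cdots+x_n)$ on $\{0,1\}^n$ with $\tilde p(t)=\sum_{\alpha}c_{\alpha}\binom{n}{|S_\alpha|}^{-1}\binom{t}{|S_\alpha|}$, whose degree is at most $\max_{\alpha}|S_\alpha|\le d=\deg(p)$, which is the claimed bound. For uniqueness, note that $x_1+\cdots+x_n$ takes each value in $\{0,1,\dots,n\}$ as $x$ ranges over the cube, so any univariate polynomial realizing $p^{\mathrm{sym}}$ is pinned down at $n+1$ points; since the $\tilde p$ constructed above has degree at most $n$ (every $\binom{t}{|S_\alpha|}$ has $|S_\alpha|\le n$), it is the unique such polynomial of degree $\le n$, which is the natural way to make the uniqueness statement well-posed.

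The computations are all routine; the one place a careless argument goes wrong is the degree bookkeeping — one must pass to multilinear form on $\{0,1\}^n$ \emph{before} reading off degrees, or else a harmless monomial such as $x_1^{100}$ would seem to violate $\deg(\tilde p)\le\deg(p)$. So the main obstacle is really just being disciplined about that reduction; everything downstream (counting the permutations, the identity $e_j(x)=\binom{\sum_i x_i}{j}$ on the cube, and the interpolation argument for uniqueness) is a direct calculation.
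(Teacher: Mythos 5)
Your proof is correct and follows essentially the same route as the paper: both reduce to the fact that elementary symmetric polynomials restrict on $\{0,1\}^n$ to the binomial coefficients $\binom{x_1+\cdots+x_n}{k}$, which are univariate polynomials of degree $k$. If anything, your version is more complete, since you derive the decomposition into elementary symmetric polynomials by symmetrizing each monomial directly (where the paper merely asserts it ``by induction''), you handle the multilinearization needed for the degree bound, and you supply the interpolation argument for uniqueness that the paper omits.
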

\begin{proof}
Let the degree of $p^{\text{sym}}$ be $d$. Let $P_k$ denote the sum of all $\binom{n}{k}$ products $\prod_{i \, \in \, S} x_i$ of the $|S| = k$ different variables. Since $p^{\text{sym}}$ is symmetric, it can be shown by induction that it can be written as: $$p^{\text{sym}}(x) = c_o + c_1P_1(x) + c_2P_2(x) + \cdots + c_dP_d(x)$$ with $c_i \in \bS$. Observe that on $x \in \{0, 1\}^n$ with $z: = x_1 + \cdots + x_n$ ones, $P_k$ assumes value: $$P_k(x) = \binom{z}{k} = \frac{z(z-1)\cdots(z-k+1)}{k!}$$ which is a polynomial of degree $k$ of $z$. Therefore the univariate polynomial $\tilde{p}(z)$ defined by: $$\tilde{p}(z) := c_0 + c_1 \binom{z}{1} + c_2 \binom{z}{2} + \cdots + c_d \binom{z}{d}$$ has the desired property.
\end{proof}

As $\deg(p)$ is bounded below by $\deg(\tilde{p})$, we wish to find a bound for $\deg(\tilde{p})$. We will therefore need to use a result from approximation theory (see, e.g. \cite{nia}).
\begin{theorem}
(Markov Inequality) Let $p: \bS \rightarrow \bS$ be a univariate polynomial of degree $d$ that for any real number $a_1 \leq x \leq a_2$ satisfies $b_1 \leq p(x) \leq b_2$. Then, for all $a_1 \leq x \leq a_2$, the derivative of $p$ satisfies $$|p'(x)| \leq \frac{d^2(b_2 - b_1)}{a_2 - a_1}$$
\end{theorem}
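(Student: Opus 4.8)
The plan is to reduce the statement to the canonical situation and then invoke the extremal role of the Chebyshev polynomials. First I would normalise: given $p$ with $b_1 \le p(x) \le b_2$ on $[a_1,a_2]$, the substitution
$$q(t) \;=\; \frac{2}{b_2 - b_1}\left(p\!\left(\tfrac{a_2 - a_1}{2}\,t + \tfrac{a_1 + a_2}{2}\right) - \tfrac{b_1 + b_2}{2}\right)$$
produces a polynomial of the same degree $d$ with $|q(t)| \le 1$ on $[-1,1]$, and the chain rule gives $|p'(x)| = \frac{b_2 - b_1}{a_2 - a_1}\,|q'(t)|$. So it suffices to prove the normalised inequality: if $\deg q \le d$ and $|q(t)|\le 1$ on $[-1,1]$, then $|q'(t)| \le d^2$ for all $t \in [-1,1]$, the extremal case being the Chebyshev polynomial $T_d$ defined by $T_d(\cos\theta) = \cos(d\theta)$, for which $T_d'(1) = d^2$ and, since $T_d'(\cos\theta) = d\sin(d\theta)/\sin\theta$ with $|\sin(d\theta)| \le d|\sin\theta|$, in fact $\max_{[-1,1]}|T_d'| = d^2$.

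The heart of the argument is the growth lemma of Chebyshev: any $q$ with $\deg q \le d$ and $|q|\le 1$ on $[-1,1]$ satisfies $|q(x)| \le |T_d(x)|$ for every $x$ with $|x| \ge 1$. I would prove this by Lagrange interpolation at the $d+1$ extrema $\eta_k = \cos(k\pi/d)$ of $T_d$: writing $q = \sum_{k=0}^d q(\eta_k)\,\ell_k$ with $\ell_k$ the Lagrange basis, a direct sign count of the factors of $\ell_k(x) = \prod_{j \ne k}\frac{x - \eta_j}{\eta_k - \eta_j}$ shows that for $x > 1$ one has $(-1)^k\ell_k(x) > 0$; since $T_d(\eta_k) = (-1)^k$ forces $T_d = \sum_k (-1)^k \ell_k$, it follows that $|q(x)| \le \sum_k |\ell_k(x)| = \sum_k (-1)^k \ell_k(x) = T_d(x) = |T_d(x)|$, and symmetrically for $x < -1$.

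From here I would extract the endpoint bound $|q'(1)| \le d^2$ by a degree/sign-counting argument. Assuming $q'(1) = A > d^2$, the polynomial $P := \tfrac{A}{d^2}T_d - q$ has $\deg P \le d$ and satisfies $\operatorname{sign} P(\eta_k) = (-1)^k$ (because $|q(\eta_k)| \le 1 < A/d^2$), hence at least $d$ zeros in $(-1,1)$, none equal to $1$; yet $P(1) = \tfrac{A}{d^2} - q(1) > 0$ while $P'(1) = \tfrac{A}{d^2}d^2 - A = 0$, which is incompatible with $P$ having $d$ simple real zeros all lying below $1$ (by Rolle its derivative would then have all $d-1$ of its zeros below the largest zero of $P$, so $P'(1)\ne 0$); and if $\deg P < d$ then $P \equiv 0$, forcing $q = \tfrac{A}{d^2}T_d$ and contradicting $|q|\le 1$. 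Applying this to $-q$ and to $t \mapsto q(-t)$ gives $|q'(\pm 1)| \le d^2$. For a point $t_0$ in the open interval I would invoke the classical Bernstein inequality $|q'(t_0)| \le d/\sqrt{1 - t_0^2}$, which already gives $|q'(t_0)| \le d^2$ whenever $|t_0| \le \sqrt{1 - 1/d^2}$, and cover the thin remaining neighbourhoods of $\pm 1$ by applying the endpoint bound to an affine rescaling of $q$ mapping $[-1,1]$ onto a subinterval of $[-1,1]$ with $t_0$ as an endpoint. Undoing the normalisation yields the stated inequality. The main obstacle is precisely the endpoint estimate: this is where the sharp constant $d^2$ is forced (Bernstein-type bounds alone only give $O(d^2)$ near $\pm1$), and pinning it down needs the Chebyshev comparison together with the careful zero-counting above; the affine reduction, the interior bound, and the Lagrange-basis sign bookkeeping are routine by comparison.
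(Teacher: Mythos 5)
The paper itself offers no proof of this theorem --- it is quoted as a standard result from approximation theory with a pointer to Achieser --- so your attempt is necessarily an independent reconstruction, and most of it is sound. The affine normalisation to $|q|\le 1$ on $[-1,1]$, the Lagrange-interpolation proof of the Chebyshev growth lemma (the sign bookkeeping for $(-1)^k\ell_k(x)>0$ when $x>1$ is correct), and the endpoint estimate $|q'(\pm 1)|\le d^2$ via the zero-counting argument for $P=\tfrac{A}{d^2}T_d-q$ are all right, including the separate treatment of the degenerate case $\deg P<d$. (A minor remark: the endpoint argument as written never actually uses the growth lemma; the alternation of $\operatorname{sign}P(\eta_k)$ needs only $|q(\eta_k)|\le 1$ and $T_d'(1)=d^2$.)

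The genuine gap is the transition region $\sqrt{1-1/d^2}<|t_0|<1$, which you describe as routine. Bernstein covers only $|t_0|\le\sqrt{1-1/d^2}$, and the affine rescaling you propose cannot recover the sharp constant on the rest: mapping $[-1,1]$ onto $[a,t_0]\subseteq[-1,1]$ and applying the endpoint bound to the rescaled polynomial yields $|q'(t_0)|\le \tfrac{2d^2}{t_0-a}\ge\tfrac{2d^2}{1+t_0}$, which exceeds $d^2$ for every $t_0<1$ (it is roughly $d^2+\tfrac14$ at the edge of the Bernstein zone); since a subinterval of $[-1,1]$ with right endpoint $t_0<1$ has length strictly less than $2$, no choice of $a$ closes this. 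The sharp constant in the end zone is exactly the non-routine part of Markov's theorem. The standard repair is a different comparison there: interpolate $q'$ (degree at most $d-1$) at the $d$ zeros $x_k$ of $T_d$, bound $|q'(x_k)|\le d/\sqrt{1-x_k^2}=|T_d'(x_k)|$ by Bernstein, and verify that for $|x|\ge\cos(\pi/(2d))$ all terms of the interpolation formula carry the same sign, giving $|q'(x)|\le|T_d'(x)|\le d^2$. Without some such argument your proof establishes Markov's inequality only with a constant slightly worse than $d^2$ (which, to be fair, would still suffice for the application in Lemma 4.13, but not for the theorem as stated).
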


In our case, because we have information on the values of $p(x)$ only for integer $x$, we need this lemma next:

\begin{lemma}
Let $p$ be a polynomial with the following properties:\begin{enumerate}
    \item For any integer $0 \leq i \leq n$, we have $b_1 \leq p(i) \leq b_2$.
    \item For some real $0 \leq x \leq n$, the derivative of $p$ satisfies $|p'(x)| \geq c$.
\end{enumerate}
Then $$\deg(p) \geq \sqrt{\frac{cn}{c + b_2 - b_1}}$$
\end{lemma}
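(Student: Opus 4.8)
The plan is to massage $p$ into a form where the Markov inequality stated above applies directly. First I would dispose of two degenerate cases: if $\deg(p)=0$ then $p$ is constant and $p'\equiv 0$, contradicting hypothesis~2 since $c>0$, so I may assume $d:=\deg(p)\ge 1$; and if $d^2\ge n$ the desired inequality is immediate, because $c/(c+b_2-b_1)\le 1$ forces $\sqrt{cn/(c+b_2-b_1)}\le\sqrt n\le d$. Hence from now on $d^2<n$.

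Next I would normalize the vertical range. Set $g(x):=p(x)-\frac{b_1+b_2}{2}$; this changes neither the degree nor $|p'|$, and hypothesis~1 becomes $|g(i)|\le r$ for every integer $0\le i\le n$, where $r:=\frac{b_2-b_1}{2}$. Let $M:=\max_{x\in[0,n]}|g(x)|$, attained at some $x_0\in[0,n]$ by continuity and compactness. Since $g$ maps $[0,n]$ into $[-M,M]$, the Markov inequality (applied with interval length $a_2-a_1=n$, range spread $b_2-b_1=2M$, and degree $d$) gives $|g'(t)|\le \frac{2d^2M}{n}$ for every $t\in[0,n]$.

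The crux is to turn the integer-point bound into a bound on $M$ itself. Pick the integer $i\in\{0,1,\dots,n\}$ closest to $x_0$, so that $|x_0-i|\le\frac12$; integrating $g'$ from $i$ to $x_0$ and using the Markov bound yields $M=|g(x_0)|\le |g(i)|+\tfrac12\cdot\frac{2d^2M}{n}\le r+\frac{d^2M}{n}$, whence $M\le \frac{rn}{n-d^2}$ (this is where $d^2<n$ gets used). Feeding this back into the Markov bound at the point $x$ of hypothesis~2 gives $c\le|g'(x)|\le \frac{2d^2M}{n}\le \frac{2d^2r}{n-d^2}$, and clearing denominators with $2r=b_2-b_1$ produces $cn\le d^2(c+b_2-b_1)$, i.e.\ $\deg(p)=d\ge\sqrt{cn/(c+b_2-b_1)}$.

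I expect the main obstacle to be exactly the bootstrap in the last paragraph: a priori $M$, the maximum of $|g|$ over the full real interval, could be enormous compared with its values at the integers, and the estimate that controls it has $M$ on both sides, so it only closes because we already arranged $d^2<n$ — which is why the case $d^2\ge n$ had to be peeled off at the start. Getting the clean constant $c+b_2-b_1$ (rather than something like $2c+b_2-b_1$) also hinges on using the \emph{nearest} integer to $x_0$, giving distance $\le\tfrac12$, instead of a cruder floor/ceiling estimate.
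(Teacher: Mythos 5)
Your proof is correct and rests on the same two pillars as the paper's: the Markov inequality and the observation that any real $x\in[0,n]$ is within $\tfrac12$ of an integer, so $p$ cannot drift far from its integer-point values. The organizational difference is which quantity you choose to bootstrap on. The paper sets $c':=\max_{[0,n]}|p'|$ and immediately concludes $b_1-\tfrac{c'}{2}\le p(x)\le b_2+\tfrac{c'}{2}$ for all real $x\in[0,n]$; Markov then gives $c'\le \deg(p)^2(c'+b_2-b_1)/n$, which can be solved for $\deg(p)^2$ unconditionally and then relaxed from $c'$ to $c$ using monotonicity of $t\mapsto t/(t+B)$. You instead bootstrap on $M:=\max|g|$ (after centering), which forces you to first peel off the case $d^2\ge n$ so the inequality $M\le r+d^2M/n$ actually closes; the paper's choice of bootstrap variable avoids that case split entirely. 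Both are valid; the paper's version is just a bit tighter in its bookkeeping, and your explicit treatment of the degenerate cases and of the nearest-integer step fills in the word \emph{clear} that the paper leaves to the reader.
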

\begin{proof}
Let $c' = \max_{0 \leq x \leq n} |p'(x)|$ which is definitely greater than or equal to $c$. It is also clear that for all real $0 \leq x \leq n$: $$b_1 - \frac{c'}{2} \leq p(x) \leq b_2 + \frac{c'}{2}$$ Using the Markov inequality, we have: $$c' \leq \frac{\deg(p)^2(c' + b_2 - b_1)}{n}$$ Thus, $$\deg(p)^2 \geq \frac{c'n}{c' + b_2 - b_1} \geq \frac{cn}{c + b_2 - b_1}$$
\end{proof}

\begin{lemma}
Let $f$ be a Boolean function such that $f(000 \cdots 0) = 0$ and for every Boolean vector $\mathbf{x}$ of Hamming weight $1$, $f(\mathbf{x}) = 1$. Then: \begin{equation}
    \deg(f) \geq \sqrt{\frac{n}{2}} \label{degtwo}
\end{equation} and \begin{equation}
    \widetilde{\deg}(f) \geq \sqrt{\frac{n}{6}} \label{degsix}
\end{equation}
\end{lemma}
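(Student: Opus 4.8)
The plan is to run the symmetrization argument twice — once on the exact multilinear representation of $f$ and once on an approximating polynomial — and in each case feed the output into the Markov-type degree bound proved just above (the lemma stating that if $b_1 \le p(i) \le b_2$ for all integers $0 \le i \le n$ and $|p'(x)| \ge c$ for some real $0 \le x \le n$, then $\deg(p) \ge \sqrt{cn/(c + b_2 - b_1)}$). The only real work is to pin down the right values of $b_1$, $b_2$, $c$ in each case, and the hypotheses on $f$ at the all-zeros vector and at the Hamming-weight-one vectors are exactly what supplies them.

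For $\deg(f) \ge \sqrt{n/2}$: let $p$ be the unique multilinear polynomial representing $f$, so $\deg(p) = \deg(f) =: d$. Applying the symmetrization lemma to $p$ produces a univariate $\tilde p$ with $p^{\text{sym}}(x) = \tilde p(x_1 + \cdots + x_n)$ for all $x \in \{0,1\}^n$ and $\deg(\tilde p) \le d$. Since $p^{\text{sym}}(x)$ is an average of values of $f$, each equal to $0$ or $1$, every $\tilde p(i)$ with $0 \le i \le n$ an integer lies in $[0,1]$; moreover $\tilde p(0) = f(0\cdots 0) = 0$ and, because every weight-one input has $f$-value $1$, $\tilde p(1) = 1$. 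By the mean value theorem there is $x_0 \in [0,1] \subseteq [0,n]$ with $\tilde p'(x_0) = 1$. Invoking the lemma with $b_1 = 0$, $b_2 = 1$, $c = 1$ gives $\deg(\tilde p) \ge \sqrt{n/(1+1)} = \sqrt{n/2}$, whence $\deg(f) = d \ge \deg(\tilde p) \ge \sqrt{n/2}$.

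For $\widetilde{\deg}(f) \ge \sqrt{n/6}$: take a polynomial $q$ with $|q(x) - f(x)| < 1/3$ on $\{0,1\}^n$ and $\deg(q) = \widetilde{\deg}(f) =: d$, symmetrize, and obtain $\tilde q$ with $\deg(\tilde q) \le d$. Since $q$ takes values in $(-1/3, 4/3)$ everywhere on $\{0,1\}^n$, so do its averages, hence $-1/3 \le \tilde q(i) \le 4/3$ for all integers $0 \le i \le n$; set $b_1 = -1/3$, $b_2 = 4/3$. At the endpoints, $\tilde q(0)$ lies within $1/3$ of $f(0\cdots 0) = 0$, while $\tilde q(1)$ — an average of $q$-values each within $1/3$ of $1$ — lies within $1/3$ of $1$, so $\tilde q(1) - \tilde q(0) > 1/3$; the mean value theorem then yields $x_0 \in [0,1]$ with $|\tilde q'(x_0)| > 1/3$, so we may take $c = 1/3$. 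The lemma gives $\deg(\tilde q)^2 \ge \frac{(1/3)n}{1/3 + 4/3 + 1/3} = \frac{n}{6}$, and therefore $\widetilde{\deg}(f) = d \ge \sqrt{n/6}$.

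I do not expect a genuine obstacle: the proof is a direct assembly of the symmetrization lemma, the Markov inequality, and the degree bound derived from it. The one point that needs care is the observation that $p^{\text{sym}}$ (respectively $q^{\text{sym}}$), evaluated at an integer Hamming weight, is a convex combination of the $f$-values (respectively $q$-values) over inputs of that weight — this is precisely what legitimizes the choices of $b_1, b_2$ and the endpoint evaluations of $\tilde p(0), \tilde p(1)$ (respectively $\tilde q(0), \tilde q(1)$). Once that convexity remark is recorded, both bounds drop out by substituting the constants above into the degree inequality.
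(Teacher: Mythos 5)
Your proof is correct and follows essentially the same route as the paper's: symmetrize the (exact or approximating) polynomial, bound its values at the integers $0,\dots,n$, use the hypotheses at weight $0$ and weight $1$ together with the mean value theorem to get a derivative lower bound, and feed $b_1, b_2, c$ into the Markov-based degree lemma. Your constants ($b_1=0, b_2=1, c=1$ giving $\sqrt{n/2}$; $b_1=-1/3, b_2=4/3, c=1/3$ giving $\sqrt{n/6}$) match the paper's, and your explicit convexity remark justifying the integer-point bounds is exactly the point the paper relies on implicitly.
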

\begin{proof}
We will first prove the bound for $\widetilde{\deg}(f)$. The sharper bound for $\deg(f)$ follows exactly the same lines. Let $p$ be a univariate polynomial approximating $f$, and consider $\tilde{p}$ the univariate polynomial giving its symmetrization. Now, $\tilde{p}$ satisfies the following properties:\begin{enumerate}
    \item By Lemma 4.17, we have $\deg(\tilde{p}) \leq \deg(p)$.
    \item Since for every Boolean vector $\mathbf{x}$, then $p(\mathbf{x})$ is within $\frac13$ of a Boolean value (by equation $(8)$), so for every integer $0 \leq i \leq n, -\frac13 \leq \tilde{p}(i) \leq \frac43$.
    \item Since, $f(000 \cdots 0) = 0$, we get $\tilde{p}(0) \leq \frac13$ by equation $(8)$.
    \item Since for all Boolean vectors $\mathbf{x}$ of Hamming weight $1$, $f(\mathbf{x}) = 1$ using equation $(8)$ once again, we get $\tilde{p}(1) \geq \frac23$.
\end{enumerate}

By the properties $(3)$ and $(4)$ listed above and on using the Mean Value Theorem (MVT) for derivatives, we obtain for some real $0 \leq z \leq 1$, the derivative $\tilde{p}'(z) \geq \frac{1}{3}$. We can now apply Lemma 4.19 to obtain the lower bound for $\deg(\tilde{p})$. We can also get a similar bound for $\deg(p)$ by using $0 \leq \tilde{p}(i) \leq 1$ along with Lemma 4.19 and the MVT for derivatives.
\end{proof}

The authors also give an example of a function $f$ satisfying $f(\mathbf{0})=0$ and $f(e_i) = 1$ where $\mathbf{0}$ is the zero vector and $e_i$ is the Boolean vector with one $1$ at the $i$-th position whose degree is significantly smaller than $n$.
\begin{lemma}
There exists an (explicitly given) Boolean function $f$ of $n$ variables satisfying $f(\mathbf{0})=0$ and $f(e_i) = 1$ and $\deg(f) = n^{\alpha}$ for $\alpha = \log_{3}2 = 0.631\cdots$.\qed
\end{lemma}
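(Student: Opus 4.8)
The plan is to build $f$ by recursively composing a small gadget, the classical "amplification" trick. First I would fix a base function on three variables, namely
$$g(x_1,x_2,x_3) = x_1 + x_2 + x_3 - x_1x_2 - x_1x_3 - x_2x_3.$$
A direct evaluation shows that $g$ sends the weight-$0$ input to $0$, the weight-$1$ and weight-$2$ inputs to $1$, and the weight-$3$ input to $0$; hence $g$ is a genuine Boolean function, it satisfies $g(\mathbf{0})=0$ and $g(e_i)=1$, and — since it contains the monomials $x_ix_j$ but no $x_1x_2x_3$ term — its multilinear degree is exactly $2$.

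Next, for $n = 3^k$ I would define a sequence $f_1,\dots,f_k$ by $f_1 = g$ and, for $j \geq 2$, partitioning the $3^j$ variables of $f_j$ into three consecutive blocks $B_1,B_2,B_3$ of size $3^{j-1}$ and setting $f_j = g\bigl(f_{j-1}(B_1),\,f_{j-1}(B_2),\,f_{j-1}(B_3)\bigr)$, where each block carries its own disjoint copy of $f_{j-1}$. Put $f = f_k$. The two endpoint conditions then follow by induction on $k$: on the input $\mathbf{0}$ every block is all-zero, so each inner copy of $f_{j-1}$ returns $0$ and $g(0,0,0)=0$; on the input $e_i$ the unique coordinate equal to $1$ lies in exactly one block, whose copy of $f_{j-1}$ is evaluated at a unit vector and returns $1$ while the other two return $0$, and $g(1,0,0)=1$.

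The step I expect to be the real obstacle is showing that the degree multiplies exactly, i.e. $\deg f_j = 2^j$, with no cancellation. Since $g$ is multilinear and its three arguments are polynomials over pairwise disjoint variable sets, the composition $g(p,q,r) = p+q+r-pq-pr-qr$ is again multilinear, which gives the upper bound $\deg f_j \le 2\deg f_{j-1}$. For the matching lower bound, write $p,q,r$ for the three copies of $f_{j-1}$ and let $p_{\mathrm{top}},q_{\mathrm{top}},r_{\mathrm{top}}$ denote their top-degree homogeneous parts, which are nonzero of degree $2^{j-1}$ by induction. The only degree-$2^j$ contribution to $g(p,q,r)$ comes from $-(p_{\mathrm{top}}q_{\mathrm{top}} + p_{\mathrm{top}}r_{\mathrm{top}} + q_{\mathrm{top}}r_{\mathrm{top}})$, since the linear part $p+q+r$ of $g$ contributes degree at most $2^{j-1} < 2^j$. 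These three products are supported on monomials drawn from $B_1\cup B_2$, $B_1\cup B_3$, and $B_2\cup B_3$ respectively, hence on pairwise disjoint monomial sets, so they cannot cancel one another; thus $\deg f_j = 2^j$ exactly. In particular $\deg f = 2^k = (3^k)^{\log_3 2} = n^{\alpha}$ with $\alpha = \log_3 2 = 0.6309\ldots$, as required. Finally, for $n$ not a power of $3$, take $k = \lceil \log_3 n\rceil$ and restrict $f_k$ by fixing its last $3^k - n$ variables to $0$; the resulting function on $n$ variables still satisfies $f(\mathbf{0})=0$ and $f(e_i)=1$, and its degree is $O(n^{\alpha})$, which suffices for the asymptotic claim (the exact equality being the case $n = 3^k$).
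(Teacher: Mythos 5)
Your proposal is correct and follows essentially the approach the paper hints at: your base gadget $g$ is precisely the degree-$2$ function $E_3$ the paper refers to (it equals $1$ iff the three bits are not all equal), and the recursive composition over three disjoint blocks is exactly the "complete ternary tree" construction. The paper leaves the proof as an exercise, so the only substantive work is your degree-multiplication argument, which is sound: the top-degree part of $g(p,q,r)$ is $-(p_{\mathrm{top}}q_{\mathrm{top}} + p_{\mathrm{top}}r_{\mathrm{top}} + q_{\mathrm{top}}r_{\mathrm{top}})$, and these three products have pairwise disjoint monomial supports (each drawn from a different pair of blocks), so no cancellation can occur and $\deg f_j = 2^j$ exactly, giving $\deg f = n^{\log_3 2}$ when $n = 3^k$.
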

We leave it to the reader to prove the lemma by constructing a Boolean function $E_3(x, y, z)$ and building a complete ternary tree on the $n$ variables by considering an extension of the function. \newline 

Now, Nisan \cite{earlier} had previously proved an polynomial relation between the block sensitivity and the decision tree complexity as: 
\begin{equation}
    bs(f) \leq D(f) \leq bs^4(f) \label{bsD}
\end{equation}

\begin{lemma}
For every Boolean function $f$, \begin{equation}
    \deg(f) \geq \sqrt{\frac{bs(f)}{2}} \label{degbs}
\end{equation} and \begin{equation}
    \widetilde{\deg}(f) \geq \sqrt{\frac{bs(f)}{6}}
\end{equation}
\end{lemma}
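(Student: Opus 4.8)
The plan is to reduce the statement to the earlier lemma asserting that any Boolean function $g$ on $m$ variables with $g(\mathbf{0}) = 0$ and $g(e_i) = 1$ for every weight-$1$ vector satisfies $\deg(g) \geq \sqrt{m/2}$ and $\widetilde{\deg}(g) \geq \sqrt{m/6}$. Write $b = bs(f)$ and fix a block-sensitivity witness: an input $x \in \{0,1\}^n$ together with pairwise disjoint blocks $B_1, \dots, B_b \subseteq [n]$ such that $f(x^{(B_i)}) \neq f(x)$ for every $i$. First I would define a Boolean function $g$ on new variables $y_1, \dots, y_b$ via the substitution $\phi : \{0,1\}^b \to \{0,1\}^n$ that leaves coordinate $j$ equal to the constant $x_j$ when $j \notin \bigcup_i B_i$, and sets coordinate $j$ to $y_i$ if $x_j = 0$ and to $1 - y_i$ if $x_j = 1$ when $j \in B_i$; then $\phi(\mathbf{0}) = x$ and $\phi(e_i) = x^{(B_i)}$, so $g := f \circ \phi$ satisfies $g(\mathbf{0}) = f(x)$ and $g(e_i) = f(x^{(B_i)}) \neq f(x)$. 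After replacing $g$ by $1 - g$ if $f(x) = 1$ — which changes neither $\deg$ nor $\widetilde{\deg}$ — I may assume $g(\mathbf{0}) = 0$ and $g(e_i) = 1$, so the earlier lemma applies to $g$ on its $b$ variables and gives $\deg(g) \geq \sqrt{b/2}$ and $\widetilde{\deg}(g) \geq \sqrt{b/6}$.

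The second step is to check that the substitution cannot increase degree. Let $p$ be the multilinear polynomial representing $f$, respectively a polynomial approximating $f$. Since every coordinate of $\phi$ is an affine function of a single new variable, $p \circ \phi$ is a polynomial in $y_1, \dots, y_b$ of degree at most $\deg(p)$; because the blocks $B_i$ are disjoint it may fail to be multilinear, but its multilinear reduction on $\{0,1\}^b$ still has degree at most $\deg(p)$ and agrees with $g$ at every Boolean point, so $\deg(g) \leq \deg(f)$. In the approximate case $\phi$ maps $\{0,1\}^b$ into $\{0,1\}^n$, hence $|(p \circ \phi)(y) - g(y)| = |p(\phi(y)) - f(\phi(y))| < \frac13$ for all $y \in \{0,1\}^b$; minimizing over $p$ gives $\widetilde{\deg}(g) \leq \widetilde{\deg}(f)$. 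Chaining the inequalities yields $\deg(f) \geq \deg(g) \geq \sqrt{b/2} = \sqrt{bs(f)/2}$ and $\widetilde{\deg}(f) \geq \widetilde{\deg}(g) \geq \sqrt{bs(f)/6}$, as required.

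There is no genuine mathematical obstacle here — the argument is just a restriction fed into the already-proved $\sqrt{m/2}$ and $\sqrt{m/6}$ bounds — so the point demanding care is the bookkeeping of $\phi$: one must verify that flipping a block corresponds coordinatewise to the degree-$1$ maps $x_j \mapsto y_i$ and $x_j \mapsto 1 - y_i$ rather than to anything nonlinear, that this is compatible with the $0/1$ encoding underlying $\deg$ and $\widetilde{\deg}$, and that the output negation $f \mapsto 1-f$ is harmless (it is, since $1-p$ has the same degree as $p$ and $|(1-p) - (1-f)| = |p-f|$ pointwise). Finally I would record that feeding (\ref{degbs}) into Nisan's relation $bs(f) \leq D(f) \leq bs(f)^4$ from (\ref{bsD}), together with $\widetilde{\deg}(f) \leq \deg(f) \leq D(f)$, closes the loops behind Theorems 4.2 and 4.3: $D(f) \leq bs(f)^4 \leq (2\deg(f)^2)^4 = 16\deg(f)^8$, and likewise $D(f) \leq (6\,\widetilde{\deg}(f)^2)^4 = 1296\,\widetilde{\deg}(f)^8$.
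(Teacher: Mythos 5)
Your proposal is correct and follows essentially the same route as the paper: the identical substitution $x_j \mapsto y_i$, $1-y_i$, or the constant $a_j$ reducing $f$ to a function of $t=bs(f)$ variables satisfying the hypotheses of Lemma 4.14, followed by the observation that this restriction cannot raise the exact or approximate degree. You merely spell out details the paper leaves implicit (the negation when $f(x)=1$, the multilinear reduction, and the approximate case), so no further comment is needed.
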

\begin{proof}
We refer to Theorem 14.11, \cite{stasys} for the proof of this lemma. Let $f(x)$ be a Boolean function on $n$ variables, and let $q: \bS^n \rightarrow \bS$ be the multilinear polynomial of degree $d$ representing $f$. By Lemma 4.20, we know that every Boolean function $f$ of $n$ variables, which rejects the zero vector and accepts all $n$ vectors with Hamming weight $1$, has $\deg(f) \geq \sqrt{\frac{n}2}$. It's therefore enough to construct a multilinear polynomial $p$ of $t = bs(f)$ variables satisfying the conditions of this lemma.\newline

Let $t = bs(f)$, and $a$ and $S_1, \cdots, S_t$ be the input and the sets achieving the block sensitivity. Let us assume WLOG that $f(a) = 0$. We transform $q(x_1, x_2, \cdots, x_n)$ into a multilinear polynomial $p(y_1, y_2, \cdots, y_t)$ of $t$ new variables by replacing every variable $x_j$ in $p$ as follows:
Define a function $f'(y_1, \cdots, y_t)$ as follows:
$$x_j: = \begin{cases}
y_i, &\text{if } a_j = 0 \, \, \text{and } j \in S_i\\
1 - y_i, &\text{if } a_j = 1 \, \, \text{and } j \in S_i\\
a_j & \text{if } j\notin S_1 \cup \cdots \cup S_t
\end{cases}$$ That is, for $y \in \{0, 1\}^t$ we have that:
$$p(y) = q(a \oplus y_1S_1 \oplus \cdots \oplus y_tS_t)$$ where $$y_iS_i = (0, \cdots, 0, \overbrace{y_1, \cdots, y_i}^{S_i}, 0, \cdots, 0)$$ We can easily verify that $p$ is a multilinear polynomial of degree at most $d$, and satisfies the conditions of Lemma 4.20. We can therefore conclude that $$d = \deg(q) \geq \deg(p) \geq \sqrt{\frac{t}2} = \sqrt{\frac{bs(f)}2}$$ The proof of the other part is analogous.
\end{proof}
Since $\deg(f) \leq D(f)$ is obvious (How? Note that the tests along paths to $1$-leaves define a multilinear polynomial), by combining (\ref{bsD}) and the results of Lemma 4.20, we have (\ref{degD}) and (\ref{power}).

\section{Gotsman-Linial's result}
\label{sec:GL}

\textbf{\underline{Aim:}} In this section, we look at Gotsman and Linial's result \cite{Gotsman} that reduces the Sensitivity Conjecture to answering a 'simple' question about cubes of different dimensions: If you choose any collection of more than half the corners of a cube and color them red, is there always some red point that is connected to many other red points? 

\subsection{The Theorem}

Let us consider a Boolean function $f: \{-1, 1\}^n \rightarrow \{-1, 1\}$. Let $g$ be the same function as $f$ except that we flip the value on all odd vertices. Notice now that the sensitivity of $f$ on $x$ is the number of $i$ such that $$g(x_1, \cdots, x_i, \cdots, x_n) = g(x_1, \cdots, -x_i, \cdots, x_n)$$

This can be easily visualized as follows: Take a Boolean function $f$ and for any $x \in Q_n$, calculate $f(x)$, marking the vertex blue if $f(x) = 1$ and red, if otherwise. Thus, the sensitivity of $f$ on $x$ using $f$ is defined to be the number of neighbours of $x$ in $Q_n$ that have a different color than $x$. We can also easily define sensitivity using $g$.

Let now G be the induced subgraph of vertices of $x$ such that $g(x) = -1$ if the node $x$ was colored blue and H be the induced subgraph on the set of $x$ such that $g(x) = 1$ if the node $x$ was colored red. The sensitivity of $f$, using this new notation, is defined to be the maximum number of neighbours of any vertex in $G$ or $H$. Now, consider $f$ as a multilinear polynomial over the reals which is possible when we consider $f$ as a function going from $\bS^n$ to $\bS$. Now, the Sensitivity Conjecture states that there is some $\alpha > 0$ such that if $f$ has degree $n$, then $f$ has sensitivity at least $n^\alpha$.\newline


Now, we've seen earlier how Nisan and Szegedy show that the degree $d(f)$ is equivalent to the other complexity measures of Boolean functions such as: block sensitivity, certificate complexity and decision tree depth. Szegedy also independently proved a relation between $d(f)$ and $s(f)$ as: $$d(f) \geq \sqrt{s(f)}$$

Let's denote $\Gamma(G) = \max(\Delta(G), \Delta(Q_n \setminus G))$. Gotsman and Linial proved the following remarkable equivalence that helps in proposing an upper bound for $d(f)$ in terms of $s(f)$:
\begin{theorem} The following are equivalent for any function $h :\bN \rightarrow \bS$:
\begin{enumerate}
\item For any induced sub-graph G of $Q_n$ such that $|V(G)| \neq 2^{n-1},\, \Gamma(G) \geq h(n).$
\item For any Boolean function $f$, $h(d(f)) < s(f).$\end{enumerate}

\begin{remark}
In graph-theoretical terms: Suppose you have a partition of the hypercube graph $Q_n$ into sets $A$ and $B$ such that $|A| \neq  |B|$, and let $G$ and $H$ be the induced sub-graphs of $A$ and $B$. Then, there is some constant $\alpha > 0$ such that there is a node of $A$ or $B$ with at least $n^\alpha$ neighbours.
\end{remark}

\begin{remark}
Hao proved that given any subset $A$ of the vertices of a hypercube with $|A|>2^{n-1}$, the induced subgraph has a node of degree at least $\sqrt{n}$. Since either A or B in the G-L assumption has size greater than $2^{n-1}$, Hao's result proves the Sensitivity conjecture.
\end{remark}

\begin{proof}
Let's convert statement $1$ of the theorem into a statement on Boolean functions: Associate with the induced subgraph $G$ a Boolean function $g$ such that $g(x) =1$ iff $x \in V(G)$. How can we do this? One way would be to use Karnaugh maps. Now observe that $\deg_{G}(x) = n - s(g, x)$ for $x \in V(G)$ and the same relation also holds in $Q_n \setminus G$ for $x \notin V(G)$. (How?)\newline

Let's denote by $\mathbf{E}(g) = 2^{-n}\sum_{x}g(x)$ the average value of $g$ on $Q_n$. Now, statements $1$ and $2$ of the theorem can be seen as equivalent to:
\begin{lemma}  Theorem 5.1 can be reformulated as:

\begin{enumerate}[I]
\item For any Boolean function $g$, $\mathbf{E}(g) \neq 0 \implies \exists x : s(g, x) \leq n - h(n)$. 
\item For any Boolean function $f$, $s(f) < h(n) \implies d(f) < n$. \end{enumerate}

\begin{proof} 
\underline{Seeing $1 \rightarrow I$}: \begin{align*} \Gamma(G) = \max(\Delta(G), \Delta(Q_n \setminus G)) \geq h(n) &\implies \Delta(G) \geq h(n)\\ &\implies \max_{x \in V(G)} \deg_{G}(x) \geq h(n) \\ &\implies \exists x: \deg_{G}(x) \geq h(n) \\ &\implies \exists x: n-s(g, x) \geq h(n) \\ &\implies \exists x: s(g, x) \leq n-h(n) \end{align*} where the first implication is done assuming that $\Delta(G) \geq \Delta(Q_n \setminus G)$.\newline
\underline{Seeing $I \rightarrow 1$}: The requirement of $\mathbf{E}(g) \neq 0$ corresponds to $|V(G)| \neq 2^{n-1}$. Now, we can just reverse the implications of the previous result to prove the equivalence of $1$ and $I$.\newline

\noindent \underline{Seeing $2 \rightarrow II$}: Given $d(f) < h^{-1}(s(f))$, then it's easy to see that if $s(f) < h(n)$, the result follows.\newline
\underline{Seeing $II \rightarrow 2$}: To prove the reverse implication, let $f$ be a Boolean function of degree $d$. Fix a monomial of degree $d$ of the representing polynomial of $f$. Without loss of generality, we may assume the monomial is $x_1\cdots x_d$. Define $g(x_1, \cdots, x_d) := f(x_1, \cdots, x_d, 0, \cdots, 0)$. Then, $g$ has full degree $d$, so it follows that $s(f) \geq s(g) \geq h(d) = h(\deg f)$, as desired.
\end{proof}
\end{lemma}

To see the equivalence of $I$ and $II$, define $$g(x) = f(x)p(x)$$ where $p(x) = (-1)^{\sum x_i}$ is the parity function (note, we take the range of Boolean functions to be $\{-1, +1\}$). Since the parity function is sensitive to all $n$ variables, we observe that $\forall x \in Q_n$: \begin{equation} s(g, x) = n - s(f, x) \label{one} \end{equation} and also for all $S \subset [n]:$ \begin{equation} \hat{g}(S) = 2^{-n}\sum_{x}g(x)\prod_{i \, \in S} x_i= 2^{-n}\sum_{x}f(x)\prod_{i\, \notin S} x_i = f([n] \setminus S) \label{two} \end{equation} In particular, \begin{equation}\mathbf{E}(g) = \hat{g}(\phi) = \hat{f}([n]) \label{three} \end{equation} where $\hat{f}(S)$ denotes the Fourier transform of $f$ at $S \subset [n]$, that is, the highest order coefficient in the representation of $f$ as a polynomial. \newline

\underline{Seeing $I \rightarrow II$}: Assume that $d(f) = n$, i.e, $\hat{f}([n])\neq 0$. By (\ref{three}), $\mathbf{E}(g) \neq 0$ and by $I$, \begin{align*} \exists x: s(g, x) \leq n - h(n) \implies \exists x: s(f, x) \geq h(n)\end{align*} a contradiction.

\underline{Seeing $II \rightarrow I$}: Assume \begin{align*}\forall x: s(g, x) = n - s(f, x) > n - h(n) \implies s(f) < h(n)\end{align*} as $s(f) = \max_{x \in Q_n} s(f, x)$. Then by $II$, \begin{align*} d(f) < n \implies \hat{f}([n]) = \hat{g}(\phi) = \mathbf{E}(g) = 0\end{align*} a contradiction.

\end{proof}
\end{theorem}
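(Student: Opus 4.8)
The strategy is to route the equivalence through the intermediate pair of statements $I$ and $II$ of Lemma 5.2, which are phrased entirely in terms of Boolean functions, and then to verify the three links $1\Leftrightarrow I$, $2\Leftrightarrow II$, and $I\Leftrightarrow II$. For $1\Leftrightarrow I$ the key is the dictionary between induced subgraphs of $Q_n$ and Boolean functions: to an induced subgraph $G$ associate $g:Q_n\to\{-1,1\}$ with $g(x)=1$ exactly when $x\in V(G)$. From this I need two observations. First, among the $n$ hypercube-neighbours of a vertex $x$, those lying in $G$ (when $x\in V(G)$) are precisely the $n-s(g,x)$ neighbours on which $g$ does not change value, so $\deg_G(x)=n-s(g,x)$, and symmetrically $\deg_{Q_n\setminus G}(x)=n-s(g,x)$ when $x\notin V(G)$; hence $\Gamma(G)=\max(\Delta(G),\Delta(Q_n\setminus G))\geq h(n)$ is equivalent to $\exists x:\ s(g,x)\leq n-h(n)$. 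Second, $|V(G)|\neq 2^{n-1}$ is equivalent to $\mathbf{E}(g):=2^{-n}\sum_x g(x)\neq 0$. Plugging these in turns statement $1$ into statement $I$, and since every step is an equivalence the converse comes for free.

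Next I would dispatch $2\Leftrightarrow II$. For $2\Rightarrow II$: a function on $n$ variables has $d(f)\leq n$, so if $s(f)<h(n)$ and $h(d(f))<s(f)$ held with $d(f)=n$ we would get $h(n)<h(n)$; hence $d(f)<n$. For $II\Rightarrow 2$: given $f$ of degree $d=d(f)$, fix a degree-$d$ monomial of its multilinear representation, assume it is $x_1\cdots x_d$ after relabelling, and restrict the remaining coordinates to constants chosen so that this top monomial survives; the restriction $g$ is a function of $d$ variables with $d(g)=d$, and restricting variables cannot increase sensitivity, so $s(f)\geq s(g)$. Since $g$ has degree equal to its number of variables, the contrapositive of $II$ applied to $g$ gives $s(g)\geq h(d)$, whence $s(f)\geq h(d(f))$; the only thing to watch is the strict-versus-non-strict bookkeeping between $2$ and $II$, which is absorbed by the slack in the definitions.

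The core of the argument is $I\Leftrightarrow II$, which I would carry out with the parity twist $g(x)=f(x)\,\chi_{[n]}(x)$, where $\chi_{[n]}(x)=\prod_{i=1}^n x_i$. Two identities do all the work. Because parity is sensitive to every coordinate, flipping a coordinate flips $g$ exactly when it does not flip $f$, so $s(g,x)=n-s(f,x)$ for every $x$. And using $x_i^2=1$ together with $\hat{g}(S)=\mathbf{E}_{\mathbf{x}}[g(\mathbf{x})\chi_S(\mathbf{x})]$ one gets $\hat{g}(S)=\hat{f}([n]\setminus S)$; in particular $\mathbf{E}(g)=\hat{g}(\phi)=\hat{f}([n])$, so $\mathbf{E}(g)\neq 0$ is literally the statement $d(f)=n$. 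With these in hand, $I\Rightarrow II$ is immediate: if $d(f)=n$ then $\mathbf{E}(g)\neq 0$, so $I$ furnishes an $x$ with $s(g,x)\leq n-h(n)$, i.e.\ $s(f,x)\geq h(n)$, contradicting $s(f)<h(n)$; and $II\Rightarrow I$ is the mirror image, using that $f\mapsto f\cdot\chi_{[n]}$ is an involution, so that $s(f,x)<h(n)$ for all $x$ forces $s(f)<h(n)$, hence $d(f)<n$, hence $\mathbf{E}(g)=\hat{f}([n])=0$. I expect the main obstacle to be nailing the graph-to-function dictionary precisely — especially the degree identity on the complement graph and the matching of ``$|V(G)|\neq 2^{n-1}$'' with ``$\mathbf{E}(g)\neq 0$'' — and setting up the Fourier identity $\hat{g}(S)=\hat{f}([n]\setminus S)$ cleanly, since everything else is just chasing implications through these correspondences.
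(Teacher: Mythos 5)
Your proposal follows the paper's proof essentially step for step: you route the equivalence through the same intermediate pair $I,II$, use the same graph-to-Boolean-function dictionary with $\deg_G(x)=n-s(g,x)$ and $|V(G)|\neq 2^{n-1}\Leftrightarrow\mathbf{E}(g)\neq 0$, use the same monomial-restriction trick for $2\Leftrightarrow II$, and use the same parity twist $g=f\cdot\chi_{[n]}$ with the identities $s(g,x)=n-s(f,x)$ and $\hat g(S)=\hat f([n]\setminus S)$ for $I\Leftrightarrow II$. The strict-versus-nonstrict bookkeeping you flag in $II\Rightarrow 2$ is present (and left implicit) in the paper's argument as well, so you are not diverging there.
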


\subsection{Concluding Remarks}

Now, Gotsman-Linial's result translates a Boolean function with a polynomial gap between degree and sensitivity into a graph with the same polynomial gap between $\Gamma$ and $n$, and vice-versa. For example, observe that Rubinstein's function (given below) can be used to obtain a graph $G$ with the surprising property $\Gamma(G) = \Theta(\sqrt{n})$. As we saw previously, Chung et.al \cite{Chung} constructed a graph $G$ with $\Gamma(G) < \sqrt{n} + 1$. This theorem implies that the following conjecture is equivalent to the Sensitivity Conjecture.

\begin{lemma} There is a constant $\epsilon > 0$ such that for every induced subgraph $G$ of $Q_n$ with $|V(G)| \neq 2^{n-1}$ we have $\Gamma(G) \geq n^\epsilon$.\end{lemma}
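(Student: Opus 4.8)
The statement to establish is really the equivalence announced just above it: the displayed inequality holds for some $\epsilon>0$ if and only if the Sensitivity Conjecture holds. The plan is to run this through the Gotsman--Linial dictionary (Theorem 5.1) with the test function $h(n)=n^{\epsilon}$, and to pass between the ``degree versus sensitivity'' gap on the Boolean side and the ``$\Gamma$ versus $n$'' gap on the graph side using the polynomial relations collected in Sections 4--5. I will also record that, in light of Hao's theorem, the statement is now unconditionally true.

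For the direction ``graph statement $\Rightarrow$ Sensitivity Conjecture'', fix $h(n)=n^{\epsilon}$; then statement (1) of Theorem 5.1 is exactly the hypothesis of the Lemma, so statement (2) holds, i.e.\ $s(f)>\deg(f)^{\epsilon}$ for every Boolean $f$. By Lemma 5.16 we have $\deg(f)\ge\sqrt{bs(f)/2}$, hence $bs(f)\le 2\deg(f)^{2}<2\,s(f)^{2/\epsilon}$, which is polynomially bounded in $s(f)$; absorbing the factor $2$ into the exponent (and disposing of the handful of functions with $s(f)\le 1$, where $bs(f)\le 1$ as well, by inspection) gives a constant $C$ with $bs(f)\le s(f)^{C}$.

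For the direction ``Sensitivity Conjecture $\Rightarrow$ graph statement'', suppose $bs(f)\le s(f)^{C}$ for all $f$. Chaining $\deg(f)\le D(f)\le bs(f)^{4}$ from (\ref{bsD}) yields $\deg(f)\le s(f)^{4C}$, equivalently $s(f)\ge\deg(f)^{1/(4C)}$, and hence $s(f)>\deg(f)^{\epsilon}$ once $\epsilon$ is taken strictly below $1/(4C)$ (again treating $\deg(f)\le 1$ separately). This is statement (2) of Theorem 5.1 for $h(n)=n^{\epsilon}$, so statement (1) --- which is precisely the Lemma --- follows.

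The only delicate point is bookkeeping, not substance: Theorem 5.1 carries a strict inequality $h(d(f))<s(f)$, so in both directions one must keep the exponent a little away from the threshold and check the constant and the few degenerate functions (constants, dictators) by hand; none of this is deep. Finally, the Lemma is now an unconditional theorem with $\epsilon=1/2$: by Hao's theorem (Theorem 1.1), if $|V(G)|\neq 2^{n-1}$ then one of $G$, $Q_{n}\setminus G$ has at least $2^{n-1}+1$ vertices and therefore contains an induced subgraph on exactly $2^{n-1}+1$ vertices, whose maximum degree is at least $\sqrt{n}$; since an induced subgraph has maximum degree at most that of the graph it lies in, we conclude $\Gamma(G)\ge\sqrt{n}=n^{1/2}$.
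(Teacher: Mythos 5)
Your proposal is correct, and it matches what the paper intends: the paper does not actually supply a proof of this Lemma, it merely asserts (immediately above the statement) that ``this theorem [5.1] implies that the following conjecture is equivalent to the Sensitivity Conjecture.'' You make that equivalence explicit by instantiating $h(n)=n^{\epsilon}$ in Theorem 5.1 and chaining through $\deg(f)\ge\sqrt{bs(f)/2}$ in one direction and $\deg(f)\le D(f)\le bs(f)^{4}$ in the other, which is exactly the toolkit the paper has in hand; your caveats about the strict inequality in Theorem 5.1 and the low-degree/low-sensitivity functions ($s(f)\le 1$ forces $f$ to be constant or a dictator, hence $bs(f)\le 1$) are the right ones. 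The closing paragraph --- reducing to a $(2^{n-1}+1)$-vertex induced subgraph of whichever of $G$, $Q_n\setminus G$ is the larger side and invoking Hao's Theorem 1.1, together with monotonicity of $\Delta$ under taking induced subgraphs --- is what actually proves the Lemma unconditionally with $\epsilon=1/2$, which by Chung's construction ($\Gamma(G)<\sqrt{n}+1$) is optimal; this is the intended reading of the paper once Section 6 is in place, so your route and the paper's coincide.
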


\subsubsection{Rubinstein's function}
The following function was constructed by Rubinstein \cite{rubi}: Assume we have $n = k^2$ variables ($k$ even), which are divided into $k$ blocks of $k$ variables each. The value of the function is $1$ if there is at least one block with exactly two consecutive $1$s in it, and it is $0$ otherwise.\newline

The block sensitivity of Rubinstein's function is shown to be equal to $n/2$ (hence, the certificate complexity and the decision-tree complexity is at least $n/2$) and the sensitivity is $\sqrt{n}$; this can be verified by a direct computation of $\hat{f}([n])$. Hence, for this function, we can establish the relation: $$bs(f) \geq s(f)^2/2$$

\section{Hao's result}
\label{sec:Hao}

\textbf{\underline{Aim:}} In this section, we expound Hao's result \cite{Hao} which shows that every $(2^{n-1} + 1)$-vertex induced subgraph of the $n$-cube graph $Q_n$ has maximum degree at least $\sqrt{n}$, hence proving the Sensitivity Conjecture.

\subsection{An overview}
Rubinstein's example of quadratic discrepancy we had seen earlier is still the best known lower bound on $bs(f)$ in terms of $s(f)$. But no one had proven anything better than an exponential upper bound until Hao's result, from which it follows that: For all Boolean functions $f$, $$bs(f) \leq 2s(f)^4$$ We observe that this concrete bound is the combination of two quadratic bounds. From (\ref{degbs}), we conclude that: \begin{equation} bs(f) \leq 2\deg(f)^2\end{equation} The other is the conjecture, \begin{equation} \deg(f) \leq s(f)^2 \label{sdeg} \end{equation} in Gotsman and Linial's paper, which is what Hao proves.

\begin{theorem} For all integers $n$ greater than $1$, let $H$ be an arbitrary $(2^{n-1} + 1)$-vertex induced subgraph of $Q_n$, then $\Delta(H) \geq \sqrt{n}$. Moreover, this inequality is tight when $n$ is a perfect square. \end{theorem}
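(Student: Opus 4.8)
The plan is to run Hao Huang's spectral argument, whose engine is a carefully chosen $\pm 1$ signing of the edges of $Q_n$ rather than its ordinary $0/1$ adjacency matrix. Define a family of symmetric matrices recursively by
\[
A_1 = \begin{pmatrix} 0 & 1 \\ 1 & 0 \end{pmatrix}, \qquad A_n = \begin{pmatrix} A_{n-1} & I \\ I & -A_{n-1} \end{pmatrix} \quad (n \geq 2),
\]
where $I$ denotes the $2^{n-1} \times 2^{n-1}$ identity. Under the standard labelling of $V(Q_n)$ by $\{0,1\}^n$, the entrywise absolute value $|A_n|$ is precisely the adjacency matrix of $Q_n$, so $A_n$ is a genuine signed adjacency matrix of the hypercube; in particular, for any induced subgraph the corresponding principal submatrix has, in each row, exactly as many nonzero entries as the degree of that vertex in the subgraph.

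First I would prove the identity $A_n^2 = nI$ by induction on $n$. The base case is immediate, and the inductive step is a block computation: the two diagonal blocks of $A_n^2$ are $A_{n-1}^2 + I = (n-1)I + I = nI$, while the off-diagonal blocks are $A_{n-1} - A_{n-1} = 0$. Hence every eigenvalue of $A_n$ squares to $n$, so the spectrum of $A_n$ is contained in $\{\sqrt{n}, -\sqrt{n}\}$; since $\operatorname{tr}(A_n) = 0$, the eigenvalue $\sqrt{n}$ occurs with multiplicity exactly $2^{n-1}$.

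Next I would invoke the Cauchy interlacing theorem. Given an arbitrary $(2^{n-1}+1)$-vertex induced subgraph $H$ of $Q_n$, let $A_H$ be the principal submatrix of $A_n$ on the rows and columns indexed by $V(H)$, a symmetric matrix of size $m = 2^{n-1}+1$. Ordering eigenvalues decreasingly, interlacing gives $\lambda_{\max}(A_H) \geq \lambda_{2^{n-1}}(A_n) = \sqrt{n}$, since the $2^{n-1}$-th largest eigenvalue of $A_n$ is the last copy of $\sqrt{n}$. Finally I would turn this into a degree bound: let $v$ be an eigenvector of $A_H$ with eigenvalue $\lambda_{\max}(A_H)$ and pick $i$ with $|v_i| = \max_j |v_j| > 0$; then
\[
\sqrt{n}\,|v_i| \leq \lambda_{\max}(A_H)\,|v_i| = \Bigl|\sum_j A_H(i,j) v_j\Bigr| \leq |v_i|\,\#\{\, j : A_H(i,j) \neq 0 \,\} \leq |v_i|\,\Delta(H),
\]
so $\Delta(H) \geq \sqrt{n}$. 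For tightness when $n = k^2$, the construction of Chung et al.\ recalled in Section~3 already exhibits a $(2^{n-1}+1)$-vertex induced subgraph of $Q_n$ of maximum degree $\lceil \sqrt{n}\,\rceil$, which equals $\sqrt{n}$ exactly when $n$ is a perfect square.

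I expect the main obstacle to be conceptual rather than technical: interlacing and the eigenvector estimate are entirely standard once $A_n$ is in hand, and the block induction for $A_n^2 = nI$ is short. The real content is finding a $\pm 1$-weighting of $Q_n$ whose squared matrix equals $nI$, and the recursive block shape above is exactly what makes this identity close under induction. The step I would be most careful about is verifying $A_n^2 = nI$ together with the bookkeeping that identifies $|A_n|$ with the adjacency matrix of $Q_n$ under the chosen vertex ordering (and hence the row-nonzero count of $A_H$ with $\deg_H$).
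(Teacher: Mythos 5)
Your proposal is correct and is essentially the same argument the paper gives: the recursive $\pm 1$-signed adjacency matrix with $A_n^2 = nI$, Cauchy interlacing to force $\lambda_{\max}$ of the principal submatrix on $V(H)$ to be at least $\sqrt{n}$, the max-coordinate eigenvector estimate to convert the spectral bound into a degree bound, and Chung et al.'s construction for tightness. The only difference is cosmetic indexing ($A_n$ of size $2^n$ versus the paper's $A_N$ with $N=2^n$).
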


To prove this theorem, we will need to use Cauchy's interlace theorem \cite{fisk}. 

\subsection{Cauchy's Interlace Theorem}
\begin{lemma}
All eigenvalues of a Hermitian matrix are real numbers.
\end{lemma}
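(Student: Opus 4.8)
The plan is to use the standard Rayleigh-quotient argument. Let $A$ be an $n\times n$ Hermitian matrix, so that $A^{*}=A$ where $A^{*}$ denotes the conjugate transpose. Suppose $\lambda\in\mathbb{C}$ is an eigenvalue of $A$ with a corresponding eigenvector $v\in\mathbb{C}^{n}$, $v\neq 0$, so that $Av=\lambda v$. The idea is to form the scalar quantity $v^{*}Av$ and evaluate it in two different ways, one using the eigenvalue equation and one using Hermiticity.

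First I would substitute $Av=\lambda v$ to get $v^{*}Av=\lambda\, v^{*}v=\lambda\lVert v\rVert^{2}$, noting that $\lVert v\rVert^{2}=\sum_{i}\lvert v_{i}\rvert^{2}$ is a strictly positive real number because $v$ is a genuine (nonzero) eigenvector. Next I would take the conjugate transpose of the $1\times 1$ matrix $v^{*}Av$ and use $A^{*}=A$ to obtain $(v^{*}Av)^{*}=v^{*}A^{*}v=v^{*}Av$; since a scalar equal to its own complex conjugate is necessarily real, $v^{*}Av\in\mathbb{R}$. Combining the two computations, $\lambda\lVert v\rVert^{2}\in\mathbb{R}$, and dividing through by the positive real number $\lVert v\rVert^{2}$ forces $\lambda\in\mathbb{R}$.

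The only points needing any care -- and the closest thing here to an obstacle -- are ensuring the denominator $\lVert v\rVert^{2}$ is nonzero, which is exactly why one insists on a true eigenvector rather than the zero vector, and invoking the elementary fact that $z=\bar z$ implies $z\in\mathbb{R}$. No substantive difficulty is anticipated; the lemma is a prerequisite recorded here only to set up Cauchy's interlace theorem, which in turn drives the eigenvalue computation behind Hao's bound.
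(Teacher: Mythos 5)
Your proof is correct and takes essentially the same route as the paper: evaluate the scalar $v^{*}Av$ once via the eigenvalue equation and once via Hermiticity, then divide by $\lVert v\rVert^{2}>0$. If anything, your version is the more careful one, since the paper writes $\mathbf{x}^{T}$ where the conjugate transpose $\mathbf{x}^{*}$ is required --- for a complex eigenvector $\mathbf{x}$, the quantity $\mathbf{x}^{T}\mathbf{x}$ need not equal $\lVert\mathbf{x}\rVert^{2}$ and can even vanish.
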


\begin{proof}
Let $\lambda$ be an arbitrary eigenvalue of a Hermitian matrix $A$ with corresponding eigenvector $\mathbf{x}$. Then: \begin{align*} A\mathbf{x} = \lambda\mathbf{x} \implies \mathbf{x}^TA\mathbf{x} = \mathbf{x}^T(\lambda\mathbf{x})\implies \mathbf{x}^TA\mathbf{x} = \lambda\mathbf{x}^T\mathbf{x} = \lambda||\mathbf{x}|| \end{align*}

Take the conjugate transpose on both sides to get (note $A$ is Hermitian): \begin{align*} \mathbf{x}^T\bar{A}^T\mathbf{x} = \bar{\lambda}||\mathbf{x}|| \implies \bar{\lambda}||\mathbf{x}|| = \mathbf{x}^T\bar{A}^T\mathbf{x} = \mathbf{x}^TA\mathbf{x} = \mathbf{x}^T\lambda\mathbf{x} = \lambda||\mathbf{x}||\end{align*}

Hence, $\lambda||\mathbf{x}|| = \bar{\lambda}||\mathbf{x}||$. We observe that $\mathbf{x}$ isn't the zero vector because it's an eigenvector. So, $\bar{\lambda} = \lambda \implies \lambda \in \bS$.
\end{proof}

Now, if $f$ and $g$ are polynomials with real roots $r_1 \leq r_2 \leq \cdots \leq r_n$ and $s_1 \leq s_2 \leq \cdots s_{n-1}$, we say that $f$ and $g$ \textit{interlace} if and only if $r_1 \leq s_1 \leq r_2 \leq \cdots \leq s_{n-1} \leq r_n$. We let the reader refer to \cite{Rasc} for the proof of this result.

\begin{lemma}
The roots of polynomials $f$ and $g$ interlace if and only if the linear combinations $f + \alpha g$ have all real roots for all $\alpha \in \bS$.
\end{lemma}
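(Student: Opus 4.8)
The plan is to prove the equivalence by a standard argument relating a polynomial to a one-parameter pencil of polynomials, exactly the device one uses when setting up Cauchy interlacing via characteristic polynomials. Write $f(x) = c\prod_{j=1}^n (x - r_j)$ with $r_1 \le \cdots \le r_n$ and $g(x) = c'\prod_{j=1}^{n-1}(x - s_j)$ with $s_1 \le \cdots \le s_{n-1}$ (both leading coefficients nonzero; the degenerate cases where some leading coefficient or root is repeated are handled separately at the end). First I would dispose of the easy direction: assume the roots interlace, i.e. $r_1 \le s_1 \le r_2 \le \cdots \le s_{n-1} \le r_n$, and show $f + \alpha g$ has only real roots for every real $\alpha$. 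The key observation is that on each of the $n-1$ bracketing intervals $[r_k, r_{k+1}]$ the polynomial $f$ vanishes at both endpoints while $g$ has exactly one sign on the open interval (since $s_k$ is its only root there), so $f + \alpha g$ takes values of opposite sign (or a zero) at the two ends of the interval whenever $\alpha \ne 0$; by the intermediate value theorem this produces a real root in each of these $n-1$ intervals. Together with a sign-chasing argument at $\pm\infty$ (using that $f$ has degree $n > \deg g$, so $f + \alpha g$ has degree $n$ and its behavior at the extremes is governed by $f$) one forces an $n$th real root outside $[r_1, r_n]$, accounting for all $n$ roots. For $\alpha = 0$ there is nothing to prove.

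For the converse I would argue contrapositively: suppose the roots do \emph{not} interlace, and produce a real $\alpha$ for which $f + \alpha g$ has a non-real root. The cleanest route is to consider the rational function $\varphi(x) = f(x)/g(x)$; a real root of $f + \alpha g$ is exactly a real solution of $\varphi(x) = -\alpha$. Interlacing of the roots is equivalent to $\varphi$ being, on each interval between consecutive poles (the $s_k$'s), a monotone bijection onto all of $\bS$ — equivalently, to a certain partial-fraction decomposition $\varphi(x) = (ax+b) + \sum_k \frac{A_k}{x - s_k}$ having all residues $A_k$ of the same sign. If the roots fail to interlace, then either $f$ and $g$ share a root or the $r_j$'s are not correctly bracketed, and in both cases one finds that not all residues $A_k$ can have the same sign; a residue of the "wrong" sign makes $\varphi$ fail to be surjective onto $\bS$ on some inter-pole interval, so there is a real value $-\alpha$ omitted by $\varphi$ there — and by a counting argument ($\varphi - (-\alpha)$ then has fewer than $n-1$ real zeros among the bounded intervals, and the leftover behavior at infinity cannot recover all $n$) the polynomial $f + \alpha g$ acquires a non-real root.

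The main obstacle is the bookkeeping in the converse direction, specifically handling the exceptional configurations cleanly: common roots of $f$ and $g$, roots of multiplicity greater than one, and the boundary alignments where some $r_j$ coincides with some $s_k$. These do not fit the "strict interlacing / partial fractions with definite-sign residues" picture directly, and the cleanest fix is probably to first prove the result under the genericity assumption that all $2n-1$ roots are distinct and then recover the general statement by a continuity/perturbation argument — real-rootedness of a real polynomial of fixed degree is a closed condition, and interlacing is preserved under the appropriate limits. A secondary subtlety is making the "$n$th root at infinity" sign-chase fully rigorous; I would phrase it by evaluating $f + \alpha g$ at points slightly beyond $r_1$ and $r_n$ and comparing with its leading term, rather than by an informal appeal to limits. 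Once the equivalence is in hand, Cauchy's interlace theorem follows by applying it to the characteristic polynomials of a Hermitian matrix and a principal submatrix, which is presumably the next step in the paper.
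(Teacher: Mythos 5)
The paper does not actually prove this lemma; it simply defers the reader to Rahman and Schmeisser (\cite{Rasc}), so there is no argument in the paper to compare your proposal against, and I assess it on its own terms. The route you sketch — an intermediate-value argument on the $n-1$ intervals between consecutive roots of $f$ for the forward direction, and for the converse a passage to the rational function $\varphi = f/g$, its partial-fraction decomposition, and the constancy of sign of the residues — is the canonical path to the Hermite--Kakeya theorem and is essentially what one finds in the cited reference, so the overall strategy is sound.

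One local slip in the forward direction: you assert that on each interval $(r_k, r_{k+1})$ the polynomial $g$ ``has exactly one sign on the open interval.'' That is backwards. If $g$ kept a single sign on $(r_k, r_{k+1})$, then $g(r_k)$ and $g(r_{k+1})$ would agree in sign and the intermediate value argument would not fire. What is actually true — and what your very next clause implicitly uses — is that the simple root $s_k$ sitting strictly inside the interval forces $g$ to change sign exactly once there, so $g(r_k)$ and $g(r_{k+1})$ have opposite signs, and therefore $(f+\alpha g)(r_k)=\alpha g(r_k)$ and $(f+\alpha g)(r_{k+1})=\alpha g(r_{k+1})$ have opposite signs as well. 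You reach the correct conclusion, so this reads as a lapse of wording rather than a conceptual error, but it should be stated correctly. Also note that once $n-1$ real roots have been placed, one in each open interval $(r_k, r_{k+1})$, the $n$th root of the degree-$n$ real polynomial $f+\alpha g$ is automatically real, since non-real roots come in conjugate pairs; the explicit sign-chase at $\pm\infty$ is instructive but not strictly needed.

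In the converse direction, the residue idea is exactly the right engine, but be aware that this is where nearly all of the actual work lies: you must prove that real-rootedness of every $f+\alpha g$ forces $\varphi=f/g$ to be monotone between consecutive poles (equivalently that all residues $A_k=f(s_k)/g'(s_k)$ share a sign), and then that this monotonicity forces interlacing. The degenerate configurations (shared roots of $f$ and $g$, multiple roots, $g\equiv 0$, $\deg g\neq \deg f-1$) genuinely have to be excluded or absorbed; the perturbation argument you invoke — real-rootedness is a closed condition and interlacing survives limits — is the standard device and does close the gap, but it deserves to be a short lemma rather than a parenthetical. As written, your proposal is a correct road map with acknowledged gaps, not yet a proof.
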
 

\begin{corollary}
If $A$ is a Hermitian matrix, and $B$ is a principal submatrix of $A$, then the eigenvalues of $B$ interlace the eigenvalues of $A$.
\begin{proof}
Let's first define a principal submatrix.
\begin{definition}
Given a real $n \times n$ matrix $A$, a \textit{principal submatrix} of $A$ is obtained by deleting the same set of rows and columns of $A$.
\end{definition}
Now, simultaneously permuting rows and columns, if necessary, we may assume that the submatrix $B$ occupies rows $2,\cdots,n$ and columns $2,\cdots,n$, so that $A$ has the form: $$A = \begin{pmatrix} B&\mathbf{c}\\ \mathbf{c}^*&d\\ \end{pmatrix}$$ where $*$ signifies the conjugate transpose of a matrix. Choose $\alpha \in \mathbf{R}$, and consider the equation that follows from the linearity of the determinant: \begin{equation}
\begin{vmatrix} B-xI & \mathbf{c}\\ \mathbf{c}^* & d-x + \alpha\\ \end{vmatrix} = \begin{vmatrix} B-xI & \mathbf{c}\\ \mathbf{c}^* & d-x\\ \end{vmatrix} + \begin{vmatrix} B-xI & \mathbf{c}\\0 & \alpha\\ \end{vmatrix} \label{matrix} \end{equation}

Now, we can conclude from Lemma 6.2 using the RHS of (\ref{matrix}) that $|A-xI| + \alpha|B-xI|$ has all real roots for any $\alpha$, so the eigenvalues interlace.
\end{proof}
\end{corollary}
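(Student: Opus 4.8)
The plan is to convert the interlacing claim into a statement about roots of characteristic polynomials and then feed it to the two lemmas just proved: that a Hermitian matrix has real spectrum (Lemma 6.2), and that two polynomials of consecutive degrees interlace precisely when all of their real linear combinations have only real roots (Lemma 6.3). (An alternative would be the classical variational route — characterise the eigenvalues of $A$ by the Courant--Fischer min--max over subspaces and those of $B$ by restricting the quadratic form $\mathbf{x}^{*}A\mathbf{x}$ to the coordinate hyperplane carrying $B$ — but with Lemmas 6.2 and 6.3 in hand the determinant argument is shorter and self-contained.)

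First I would normalise the position of $B$: conjugating $A$ by a permutation matrix $P$, which is real orthogonal, so $PAP^{T}=PAP^{-1}$ is again Hermitian and cospectral with $A$ and carries $B$ to a matrix cospectral with $B$; hence I may assume $B$ is the leading $(n-1)\times(n-1)$ block, so that
$$A=\begin{pmatrix} B & \mathbf{c}\\ \mathbf{c}^{*} & d\end{pmatrix},$$
with $d\in\bS$ (a diagonal entry of a Hermitian matrix) and $B$ itself Hermitian (a principal submatrix of a Hermitian matrix is Hermitian). Put $p(x)=\det(xI_{n}-A)$ and $q(x)=\det(xI_{n-1}-B)$; these are monic of degrees $n$ and $n-1$, and by Lemma 6.2 both have only real roots, namely the eigenvalues of $A$ and of $B$.

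The key step is to compute the characteristic polynomial of the one-parameter family $M_{\alpha}:=\begin{pmatrix} B & \mathbf{c}\\ \mathbf{c}^{*} & d+\alpha\end{pmatrix}$ by splitting along its last row using multilinearity of the determinant: since $(-\mathbf{c}^{*},\,x-d-\alpha)=(-\mathbf{c}^{*},\,x-d)+(0,\,-\alpha)$,
$$\det(xI_{n}-M_{\alpha})=\det\begin{pmatrix} xI_{n-1}-B & -\mathbf{c}\\ -\mathbf{c}^{*} & x-d\end{pmatrix}+\det\begin{pmatrix} xI_{n-1}-B & -\mathbf{c}\\ 0 & -\alpha\end{pmatrix}=p(x)-\alpha\,q(x),$$
the second determinant being evaluated by cofactor expansion along its final row. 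Since $M_{\alpha}$ is Hermitian for every real $\alpha$ (its new diagonal entry $d+\alpha$ is still real), Lemma 6.2 forces its characteristic polynomial $p(x)-\alpha q(x)$ to have only real roots for every $\alpha\in\bS$; equivalently $p+\alpha q$ has all real roots for all $\alpha$. As $\deg p=\deg q+1$ and both already have real roots, Lemma 6.3 then yields exactly that the roots of $p$ and of $q$ interlace, i.e. the eigenvalues of $B$ interlace those of $A$. For a principal submatrix of size $m<n-1$ one iterates this one-step deletion, which produces the full Cauchy interlace inequalities used later in Hao's argument.

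I do not expect a genuine obstacle; the work is essentially bookkeeping. The two points needing care are: (i) getting the signs right in the determinant split — tracking the factors of $-1$ from $xI-M$ versus $M-xI$ and from the last-row cofactor — so that the identity reads $p-\alpha q$ (or at worst $p+\alpha q$, which is the same family as far as Lemma 6.3 is concerned); and (ii) confirming the hypotheses of Lemma 6.3 literally, i.e. that $q$ is monic of degree exactly $n-1$, so that the interlacing relation produced (with $n$ and $n-1$ roots) is precisely the conclusion sought. A single sentence should also dispatch the permutation-normalisation, namely that a permutation matrix is unitary, so conjugation by it preserves both the Hermitian property and the spectrum.
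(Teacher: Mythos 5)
Your proposal is correct and follows essentially the same route as the paper: reduce by a permutation (unitary) conjugation so that $B$ is a leading principal block, split the characteristic determinant of the perturbed matrix along its last row by multilinearity to obtain the family $p(x)+\alpha q(x)$, invoke the reality of Hermitian spectra for every real $\alpha$, and then apply the interlacing criterion of Lemma 6.3. Your added care about signs, the monic degrees, and the iteration for submatrices of corank greater than one only makes explicit what the paper leaves implicit.
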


This proves the following important result:
\begin{theorem} (Cauchy's Interlace Theorem) Let $A$ be a symmetric $n \times n$ matrix, and let $B$ be a $m \times m$ submatrix of $A$, for some $m < n$. If the eigenvalues of $A$ are $ \lambda_1 \geq \lambda_2 \geq \cdots \geq \lambda_n$ and the eigenvalues of $B$ are $\mu_1 \geq \mu_2 \geq \cdots \geq \mu_m$, for all $1 \leq i \leq m$, $$ \lambda_i \geq \mu_i \geq \lambda_{i + n -m}$$ \end{theorem}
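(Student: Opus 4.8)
The plan is to bootstrap Corollary 6.4, which already handles the case $m = n-1$, by deleting one row-and-column pair at a time and chaining the resulting one-step interlacing inequalities. First I would record two preliminary remarks: a real symmetric matrix is Hermitian, so Lemma 6.1 guarantees that all eigenvalues in sight are real and that the hypotheses of Corollary 6.4 are met; and in this statement ``submatrix'' is to be read as ``principal submatrix'' in the sense of Definition 6.5. Since simultaneously permuting rows and columns amounts to conjugating by a permutation matrix, which leaves the spectrum unchanged, I may assume without loss of generality that $B$ occupies the last $m$ rows and columns of $A$.

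Next I would set up the induction on $n - m$. For $0 \le k \le n-m$, let $A^{(k)}$ be the principal submatrix of $A$ obtained by deleting rows and columns $n, n-1, \dots, n-k+1$, so that $A^{(0)} = A$ and $A^{(n-m)} = B$; each $A^{(k)}$ is real symmetric of size $n-k$, and I write its eigenvalues as $\lambda^{(k)}_1 \ge \cdots \ge \lambda^{(k)}_{n-k}$. The key observation is that $A^{(k+1)}$ is obtained from $A^{(k)}$ by deleting one final row and column, hence is a principal submatrix of $A^{(k)}$, so Corollary 6.4 applies at each stage and gives the one-step interlacing
$$\lambda^{(k)}_i \ \ge\ \lambda^{(k+1)}_i \ \ge\ \lambda^{(k)}_{i+1}, \qquad 1 \le i \le n-k-1.$$

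Finally I would chain these inequalities in the two directions. Iterating the left half down the tower from $k = 0$ to $k = n-m$ yields $\lambda_i = \lambda^{(0)}_i \ge \lambda^{(1)}_i \ge \cdots \ge \lambda^{(n-m)}_i = \mu_i$, which is the upper bound $\lambda_i \ge \mu_i$. Iterating the right half the other way yields $\mu_i = \lambda^{(n-m)}_i \ge \lambda^{(n-m-1)}_{i+1} \ge \lambda^{(n-m-2)}_{i+2} \ge \cdots \ge \lambda^{(0)}_{i+(n-m)} = \lambda_{i+n-m}$, which is the lower bound $\mu_i \ge \lambda_{i+n-m}$. Combining the two chains gives $\lambda_i \ge \mu_i \ge \lambda_{i+n-m}$ for all $1 \le i \le m$.

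There is no genuinely hard step once Corollary 6.4 is available — the mathematical content lives entirely in the interlacing lemma, and what remains is bookkeeping. The one point requiring care is that the ``delete a final row and column'' operation composes correctly, i.e. that each $A^{(k+1)}$ really is a principal submatrix of $A^{(k)}$ so that Corollary 6.4 may be invoked at every stage, and that neither index chain ever runs past $\lambda^{(k)}_1$ or $\lambda^{(k)}_{n-k}$; both facts follow at once from the restriction $1 \le i \le m \le n-k$. (Note that the ``tightness when $n$ is a perfect square'' assertion belongs to Theorem 6.6, not to this lemma, so it need not be addressed here.)
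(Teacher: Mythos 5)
Your proof is correct and takes essentially the same route as the paper: it bootstraps the general statement from the one-step (co-dimension one) interlacing given by Corollary~6.4, iterating the deletion of a single row-and-column pair and chaining the resulting inequalities. The paper in fact only proves the $m = n-1$ case (Corollary~6.4) and then asserts the general theorem with the sentence ``This proves the following important result''; the inductive chaining is left implicit there (it resurfaces informally in Section~6.3 when $A_{N-2}$, $A_{N-3}$, etc.\ are introduced), so your write-up makes explicit exactly the bookkeeping the paper elides, and your index checks at the ends of the two chains are the right thing to verify.

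One small slip to fix: after asserting WLOG that $B$ occupies the \emph{last} $m$ rows and columns, you then define $A^{(k)}$ by deleting rows and columns $n, n-1, \dots, n-k+1$, i.e.\ from the bottom-right -- which deletes exactly the block where you just placed $B$. Either place $B$ in the \emph{first} $m$ rows and columns and delete from the back (as written), or keep $B$ at the back and delete rows $1, \dots, k$ from the front. The interlacing inequalities and the two chains are unaffected; this is purely a labeling inconsistency.
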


\subsection{Proving the Sensitivity Conjecture}
Our discussion of Hao's proof of the Conjecture is inspired from \cite{gllblog}. Now, it can be easily shown that the degree $d(G)$ of a graph $G$ is always at least as great as the largest eigenvalue $\lambda_1$ of the adjacency matrix $A_G$. So, let's take a $m$-vertex subgraph $H$ of $Q_n$ and let its top eigenvalue be $\lambda_1(H)$. Now, we know the entries of the adjacency matrix $A_H$ are either $0$ or $1$. Hao's insight was to realise that if we flipped the signs of some $1$s in $A_H$, thus creating a so-called "pseudo-adjacency matrix", the relation between the degree and the largest eigenvalue of the adjacency matrix still holds.
\begin{lemma}
Suppose $H$ is a $m$-vertex undirected graph, and $A$ is a symmetric matrix with entries in $\{0, \pm 1\}$ and whose rows and columns are indexed by $V(H)$, the vertex set of $H$. Also, let $A_{u, v} = 0$ whenever $u, v \in V(H)$ are non-adjacent. Then, $$\Delta(H) \geq \lambda_1 := \lambda_1(A)$$
\end{lemma}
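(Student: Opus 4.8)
The plan is to run the classical ``largest-coordinate of the top eigenvector'' argument, the only twist being that $A$ may have negative entries, so Perron--Frobenius is unavailable and one must work with absolute values throughout. First I would invoke the earlier fact that a real symmetric (hence Hermitian) matrix has only real eigenvalues, and fix a real eigenvector $\mathbf{x} = (x_v)_{v \in V(H)} \neq \mathbf{0}$ with $A\mathbf{x} = \lambda_1 \mathbf{x}$. Then I would choose a vertex $w \in V(H)$ with $|x_w| = \max_{v} |x_v|$; since $\mathbf{x}$ is an eigenvector it is nonzero, so $|x_w| > 0$.

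Next I would read off the $w$-th coordinate of the identity $A\mathbf{x} = \lambda_1 \mathbf{x}$, namely $\lambda_1 x_w = \sum_{v \in V(H)} A_{w,v}\, x_v$, and take absolute values to get $|\lambda_1|\,|x_w| \leq \sum_{v} |A_{w,v}|\,|x_v|$. This is where the two structural hypotheses on $A$ enter. Since the entries of $A$ lie in $\{0, \pm 1\}$, we have $|A_{w,v}| \leq 1$ for every $v$; and since $A_{w,v} = 0$ whenever $w$ and $v$ are non-adjacent in $H$ — in particular $A_{w,w} = 0$, as $H$ is a simple graph with no loops — the only potentially nonzero terms in the sum are the ones indexed by the $\deg_H(w)$ neighbours of $w$. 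Bounding each such $|x_v|$ by $|x_w|$ gives $|\lambda_1|\,|x_w| \leq \deg_H(w)\,|x_w| \leq \Delta(H)\,|x_w|$, and dividing through by $|x_w| > 0$ yields $|\lambda_1| \leq \Delta(H)$; in particular $\lambda_1 \leq \Delta(H)$, which is the claim.

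There is essentially no serious obstacle here. The one point that needs to be stated carefully is the counting step — that a row of $A$ has at most $\deg_H(w)$ nonzero entries rather than up to $m$ of them — since this is precisely where the hypothesis ``$A_{u,v} = 0$ for non-adjacent $u,v$'' (together with the absence of self-loops in $H$) is used, and it is the whole reason that $\Delta(H)$, rather than the order of $H$, controls $\lambda_1$. An alternative packaging would be to start from the Rayleigh-quotient identity $\lambda_1 = \max_{\mathbf{x} \neq \mathbf{0}} (\mathbf{x}^{T} A \mathbf{x})/(\mathbf{x}^{T}\mathbf{x})$ and bound the quadratic form $\mathbf{x}^{T} A \mathbf{x}$ using $|A_{u,v}| \leq 1$ and the edge structure, but the coordinatewise version above is shorter and fully self-contained.
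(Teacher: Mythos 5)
Your proof is correct and follows essentially the same argument as the paper: take an eigenvector for $\lambda_1$, look at the coordinate of largest absolute value, and bound the corresponding row sum by $\Delta(H)$ using the $\{0,\pm 1\}$ entries and the support condition. If anything, your intermediate step $|\lambda_1|\,|x_w| \le \sum_v |A_{w,v}|\,|x_v|$ is written more carefully than the paper's chain, which passes through $\bigl|\sum_j A_{i,j}\bigr|\cdot|v_i|$ without comment.
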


\begin{proof}
Let $\mathbf{v}$ be the eigenvector corresponding to $\lambda_1$. Then, $\lambda_1\mathbf{v} = A\mathbf{v}$. We choose an index $i$ that maximises the absolute value $|v_i|$. Then: \begin{align*} |\lambda_1v_i| = |(A\mathbf{v})_i| = |\sum_{j} A_{i, j}v_j| \leq |\sum_{j} A_{i, j}| \cdot |v_i| \leq \sum_{(i, j) \in E(G)} |A_{i, j}| \cdot |v_i| \leq d(H)|v_i| \end{align*} from which the result follows.
\end{proof}

Why is this lemma important? Observe that a key idea to this lemma was the introduction of $-1$ entries in the adjacency matrix $A_H$. We would like to know what happens if we try to use the original (unmodified) adjacency matrix in the lemma. One readily observes that though this matrix too has its top eigenvalue at least $\sqrt{n}$, the interlacing bound is too lossy to prove this without modification. Take for example, the 3-dimensional cube $Q_3$, in which case, the eigenvalues of the unmodified adjacency matrix are $\{-3, -1, -1, -1, 1, 1, 1, 3\}$, giving the trivial bound $\lambda_1 \geq 0$. In general, the eigenvalues of the adjacency matrix of $Q_n$ will be the integers $-n, -n+2, -n+4, \cdots, n$. \newline

Having proved this lemma, we want to find conditions that force $\lambda_1 = \sqrt{n}$, particularly when $m \geq \frac{N}{2} + 1$ with $N = |V(Q_n)| = 2^n$. Hao does this by making $A$ sit inside a matrix $A_N$ (whose construction will be discussed in section 6.4) with at least $\frac{N}{2}$ eigenvalues with value $\sqrt{n}$.\newline

For realising this, we construct $A_{N-1}$ as a principal submatrix of $A_N$ by deleting its last row and column. Since, $A_N$ and $A_{N-1}$ are both real and symmetric matrices, they have real eigenvalues and so we can order them as $\lambda_1 , \cdots, \lambda_N$ and $\mu_1, \cdots, \mu_{N-1}$ in non-increasing order. From the property outlined before Lemma 6.3, we can see that their eigenvalues always interlace.\newline

Now, observe that this process can be repeated, that is, we can construct $A_{N-2}$ by knocking off another row and its corresponding column; denoting the eigenvalues of $A_{N-2}$ by $\nu_i$, we can easily see that: \begin{equation} \mu_1 \geq \nu_1 \geq \mu_2 \geq \nu_2 \geq \mu_3 \cdots \implies \lambda_1 \geq \nu_1 \geq \lambda_3 \end{equation} Repeating this procedure gives us another matrix whose top eigenvalue is still at least as big as $\lambda_4$.\newline

Generalising this for our purposes, if we repeat this process $\frac{N}{2}-1 = 2^{n-1} - 1$ times inside $A_N$, the resulting matrix's largest eigenvalue is still at least as large as $\lambda_{\frac{N}{2}}(A_N)$, which we assign the value of $\sqrt{n}$. This means the size of the resultant (square) matrix will be $m = \frac{N}{2} + 1 = 2^{n-1} + 1$ and we conclude that $$\lambda_1(A_N) \geq \lambda_1(A_m) \geq \lambda_{\frac{N}{2}}(A_N) = \sqrt{n}$$ which when used with Lemma 6.7 yields: \boxed{d(H) \geq \sqrt{n}} which is nothing but Hao's result.

\subsection{Constructing the Matrix}
Now, as we've seen earlier, Hao showed that there exists a $N \times N$ matrix $A_N$ with entries in $\{0, \pm 1\}$ whose non-zero entries correspond to the edges of the Boolean cube and such that all the $N$ eigenvalues of $A_N$ are $\pm \sqrt{n}$ and these eigenvalues sum up to zero, meaning that the trace is zero. This would then imply that $A_N^2 = nI$.\newline

Also as we saw earlier, we would ideally want $A_N$ to be the adjacency matrix of the $n$-cube but that doesn't work: that's because each $(i, j)$ entry of the square of the unmodified induced adjacency matrix of $Q_n$ counts all paths of length $2$ from node $i$ to node $j$ and that number is nonzero.\newline

This is where the introduction of $-1$ on edges comes in handy. We arrange that every $4$-cycle of the $n$-cube has exactly one edge with $-1$. Then, the pairs of paths from one corner to the opposite corner will always cancel, leaving $A_{i,j}^2 = 0$ whenever $i \neq j$. And, $A_{i, i}^2 = n$ because there are $n$ ways to go out and come back along the same edge, always contributing $1\cdot 1$ or $(-1) \cdot (-1)$ either way.\newline

Subsequently, Hao defines the needed labelling exactly by the recursion: $$A_2 = \begin{bmatrix} 0 & 1\\ 1 & 0 \end{bmatrix} \, \text{and} \,\,\, A_N = \begin{bmatrix} A_{\frac{N}{2}} & I\\ I & -A_{\frac{N}{2}}\end{bmatrix} \, \, \text{for}\, \, (N > 2)$$

We can easily verify that $A_N^2 = nI$ by induction. A nice exposition of the physical interpretation of this pseudo-adjacency matrix using physics concepts such as the Jordan-Wigner transformation and Majorana fermions is given in \cite{Yixi}.

\section{Further discussion}
\label{sec:Fur}

\textbf{\underline{Aim:}} To provide comments on the open problems posed by Hao \cite{Hao}.\newline

We refer to \cite{rgray} for the definitions in this section concerning the symmetry of graphs. 



\begin{definition}
An \textbf{automorphism} of $G$ is a bijection $f: V(G) \rightarrow V(G)$ sending edges to edges and non-edges to non-edges. 
\end{definition}

We write $H = \text{Aut}\, G$ for the full automorphism group of $G$. Roughly speaking, the more symmetry a graph has, the larger its automorphism group will be and vice-versa. 


\begin{definition}
The graph $G$ is \textit{vertex-transitive} if $H$ acts transitively on $V(G)$, that is, for all $u, v \in V(G)$, there is an automorphism $f \in H$ such that $f(u) = v$. 
\end{definition}
Intuitively, a graph is vertex-transitive if there is no structural (i.e., non-labeling) way to distinguish vertices of the graph. Some examples are the complete graph $K_n$, and the cycle $C_n$ on $n$ vertices. For a not-so-obvious example of a vertex-transitive graph, let $G$ be a group and $S \subset G$ be a generating set for $G$ such that $1_G \notin S$ and $S$ is closed under taking inverses. 

\begin{definition}
The (right) \textbf{Cayley graph} $\Gamma = \Gamma(G, S)$ is given by: \begin{align}
    V(\Gamma) = G; \, \, \, \, \, \, E(\Gamma) = \{\{g, h\}: g^{-1}h \in S\}
\end{align}
\end{definition}
Thus, two vertices are adjacent if they differ in $G$ by right multiplication by a generator. The Cayley graph of a graph is always vertex-transitive.

\subsection{Three notions of symmetry}
\subsubsection{Distance-transitive graphs}
In a connected graph $G$, we define the \textit{distance} $d(u, v)$ between $u, v \in V(G)$ to be the length of the shortest path from $u$ to $v$. 

\begin{definition}
A graph is \textit{distance-transitive} if for any two pairs of vertices $(u, v)$ and $(u', v')$ with $d(u, v) = d(u', v')$, there is an automorphism taking $u$ to $u'$ and $v$ to $v'$. 
\end{definition}
It is seen that distance-transitivity implies vertex-transitivity (How?). The Hamming graph $H(n, k)$ is defined by the vertex set $$\mathbf{Z}_k^n = \underbrace{\mathbf{Z}_k \times \cdots \mathbf{Z}_k}_{n \, \text{times}}$$ where $\mathbf{Z}_k = [k-1] = \{0, 1, \cdots, k-2, k-1\}$ and two vertices $u$ and $v$ are adjacent iff they differ in exactly one coordinate. The $d$-dimensional hypercube is defined to be $Q_d := H(n, 2)$. It's observed that Hamming graphs are a family of distance-transitive graphs (see Chapter 5, \cite{RAB}). \newline

We recall that a tree is a connected, acyclic graph and use $T_r$ to denote a regular tree where all vertices have degree $r$. A regular tree $T_r\, (r \in \mathbb{N})$ is an example of an infinite locally-finite (all vertices have finite degree) distance-transitive graph. Macpherson's theorem \cite{Mac} gives a necessary and sufficient condition for a locally-finite infinite graph to be distance transitive.

\subsubsection{Homogeneous graphs}
\begin{definition}
A graph $G$ is called \textit{homogeneous} if any isomorphism between finite induced subgraphs extends to an automorphism of the graph. 
\end{definition}
Homogeneity is the \textit{strongest} possible symmetry condition we can impose. Gardiner's result \cite{Garda} gives a concrete classification of finite homogeneous graphs and for the infinite case, we have an example of a random graph $R$ constructed by Rado \cite{Rado}.

\subsubsection{Connected-homogeneous graphs}
\begin{definition}
A graph $G$ is \textit{connected-homogeneous} if any isomorphism between connected finite induced subgraphs extends to an automorphism. 
\end{definition}
Hence, this helps us define a class of graphs between homogeneous and distance-transitive. Gardiner \cite{Gardb} gives yet another concrete classification of finite connected-homogeneous graphs. A result of Gray and Macpherson \cite{GrayMac} helps us to classify the countable connected-homogeneous graphs.\newline

With this background, we are now in a position to analyse each open problem posed by Hao.
\subsection{Problems}
\begin{problem}
Given a graph $G$ with high symmetry, what can we say about the smallest maximum degree of induced subgraphs of $G$ with $\alpha(G) + 1$ vertices, where $\alpha(G)$ denotes the size of the largest independent set in $G$?
\end{problem}
First, we need two definitions.
\begin{definition}
A set of vertices in $G$ is an \textit{independent set} if no two vertices in the set are adjacent.
\end{definition}

\begin{definition}
A \textit{maximal independent set} is an independent set to which no other vertex can be added without destroying its independence property. The number of vertices in the largest independent set of $G$ is called the \textit{independence number}, $\alpha(G)$.
\end{definition}

The result of Chung \cite{Chung} we saw earlier stating that the smallest maximum degree of $Q_n$ is at most $\lceil \sqrt{n} \, \rceil$ has been generalised in \cite{Ding} for Hamming graphs $H(n, k)$ for all $n, k \geq 1$. It has been observed that the same result holds true for these graphs also, giving a bound independent of the value of $k$.\newline

It would be interesting to find whether the classes of symmetric graphs described above would yield analogous results.

\begin{problem}
Let $g(n, k)$ be the minimum $t$ such that every $t$-vertex induced subgraph $H$ of $Q_n$ has maximum degree at least $k$. Hao showed that $g(n, \sqrt{n}) = 2^{n-1} + 1$. Can we determine $g(n, k)$ (asymptotically) for other values of $k$?
\end{problem}
We are unaware of any prior work that has considered the quantity $g(n, k)$. However, the work \cite{Geir} analyses the maximum number of vertices of degree $k$ in an induced subgraph on $n$ vertices of $Q_k$.

\begin{problem}
The best separation between the block sensitivity $bs(f)$ and the sensitivity $s(f)$ is $bs(f) = \frac23 s(f)^2 - \frac13 s(f)$, is quadratic and shown by Ambainis, Sun \cite{Amsun}. Hao proves a quartic upper bound. Is it possible to close this gap by directly applying the spectral method to Boolean functions instead of to the hypercubes?
\end{problem}
We might try to see what happens if we tweak $bs(f) = O(s(f))^4$, which was the upper bound proved by Hao, to say, $O(s(f))^2$. To check its validity, we first look at the two results used by Hao to show $bs(f) = O(s(f))^4$. The first one, shown by Hao, which essentially proved Gotsman-Linials's conjecture (see (\ref{sdeg})) was that for all Boolean functions $f:\{0, 1\}^n \rightarrow \{0, 1\}$, the inequality $s(f) \geq \sqrt{\deg(f)}$ holds. This is seen to be a tight bound for the AND-of-ORs function defined as:
\begin{definition}
The AND-of-ORs function is defined on $n$ blocks of $n$ variables each as: $$f(x_{11}, \cdots, x_{nn}) = \bigwedge_{i=1}^{n} \, \bigvee_{i=1}^{n} \, x_{ij}$$
\end{definition}
This inequality is combined with Nisan-Szegedy's result (see (\ref{degbs})) stating that for all $f:\{0, 1\}^n \rightarrow \{0, 1\}, \, \deg(f) \geq \sqrt{\frac12 bs(f)}$.  From Hatami's survey, it is known that this bound cannot be improved beyond $bs(f) \gtrsim (\deg f)^{\log_3 6}$ where $\log_3 6 \approx 1.6309$. So, the use of $\deg(f)$ isn't feasible to try to get a quadratic sensitivity upper bound on the block sensitivity.\newline

It's still open as to whether exploring the techniques of interlacing eigenvalues of signed matrices on different objects may lead to sharper bounds.

\begin{note}
It would be instructive to mention the connection of Hao's proof to Clifford algebras as shown in \cite{Dan}; this concept has been used to extend Hao's result for a Cartesian power of a directed $l$-cycle in \cite{aka}.
\end{note}

\section{Sensitivity and CREW PRAMs}
\textbf{\underline{Aim:}} To discuss how the Sensitivity Conjecture is related to CREW PRAMs.\newline

A PRAM (Parallel Random Access Machine) is the standard model for parallel computation. It consists of a set of processors $P_0, P_1, \cdots$ which communicate by means of cells $C_0, C_1, \cdots, $ of shared memory. Now, each step of computation (of a function $f$) consists of three phases: read, compute and write phases. In the read phase, each processor may choose one cell to read from. In the compute phase, an arbitrary amount of local computation can take place. In the write phase, each processor may choose one cell to write into.\newline

In the CREW (Concurrent Read Exclusive Write) PRAM variant, simultaneous read access is permitted, but not simultaneous write access. That is, several processors may read from the same location at the same time, but two or more processors may never attempt writing into the same location at the same time. \newline

Now, a key result bounding the power of ideal CREW PRAMs is by Cook, Dwork and Reischuk \cite{CDR} where they show that CREW($f$) $= \Omega(\log(s(f)))$ is a lower bound on the number of steps required to compute a function $f$ on a CREW PRAM. After this, Noam Nisan in \cite{earlier} tweaked the definition of "sensitivity" and introduced the notion of "block sensitivity" to obtain: CREW($f$) $= \Theta(\log(bs(f)))$. Now, the natural question is whether CREW($f$) $= O(\log(s(f)))$? Proving this is nothing but the Sensitivity Conjecture.

\addcontentsline{toc}{section}{References}




\end{document}